\theoremstyle{plain}
\newtheorem{theorem}{Theorem}[section]
\newtheorem{corollary}[theorem]{Corollary}
\newtheorem{lemma}[theorem]{Lemma}
\newtheorem{proposition}[theorem]{Proposition}
\theoremstyle{definition}
\newtheorem{definition}[theorem]{Definition}
\newtheorem{example}[theorem]{Example}
\numberwithin{equation}{section}
\newcommand{\R}{{\mathbb R}}
\newcommand{\N}{{\mathbb N}}
\providecommand{\vint}[1]{\mathchoice
          {\mathop{\vrule width 5pt height 3 pt depth -2.5pt
                  \kern -9pt \kern 1pt\intop}\nolimits_{\kern -5pt{#1}}}
          {\mathop{\vrule width 5pt height 3 pt depth -2.6pt
                  \kern -6pt \intop}\nolimits_{\kern -3pt{#1}}}
          {\mathop{\vrule width 5pt height 3 pt depth -2.6pt
                  \kern -6pt \intop}\nolimits_{\kern -3pt{#1}}}
          {\mathop{\vrule width 5pt height 3 pt depth -2.6pt
                  \kern -6pt \intop}\nolimits_{\kern -3pt{#1}}}}
\newcommand{\eps}{\varepsilon}
\newcommand{\BV}{\mathrm{BV}}
\renewcommand{\L}{\mathrm{L}}
\newcommand{\id}{\mathrm{id}}
\newcommand{\mres}{\!\mathbin{\vrule height 1.6ex depth 0pt width
0.13ex\vrule height 0.13ex depth 0pt width 1.1ex}\!}
\DeclareMathOperator{\dist}{dist}
\DeclareMathOperator{\esssup}{ess\,sup}
\def\blfootnote{\xdef\@thefnmark{}\@footnotetext}
\begin{document}
\title{Lower semicontinuity for an integral\\
functional in $\BV$
\footnote{{\bf 2010 Mathematics Subject Classification}: 49J45, 26B30, 52A99.}}
\author{Jan Kristensen and Panu Lahti\\
Mathematical Institute, University of Oxford,\\ Andrew Wiles Building,\\
Radcliffe Observatory Quarter, Woodstock Road,\\
Oxford, OX2 6GG.\\
E-mail: {\tt jan.kristensen@maths.ox.ac.uk,}\\
{\tt lahti@maths.ox.ac.uk}
}
\maketitle

\begin{abstract}
We prove a lower semicontinuity result for a functional of linear growth initially defined by
\[
\int_{\Omega}F\left(\frac{dDu}{d\mu}\right)\,d\mu
\]
for $u\in\BV(\Omega;\R^N)$ with $Du\ll \mu$. The positive Radon measure $\mu$ is only assumed to satisfy $\mathcal L^n\ll \mu$.
\end{abstract}

\noindent {\bf Acknowledgments:} This research was done while P.L. was visiting the Oxford Centre for Nonlinear Partial Differential Equations (OxPDE) from September 2014 to July 2016. During this time, P.L. was supported by Aalto University as well as the Finnish Cultural Foundation.

\newpage
\section{Introduction}

In this work we prove a lower semicontinuity result for a functional of linear growth initially defined in an open set $\Omega\subset\R^n$ by
\begin{equation}\label{eq:definition of functional in intro}
\int_{\Omega}F\left(\frac{dDu}{d\mu}\right)\,d\mu
\end{equation}
for $u\in\BV(\Omega;\R^N)$ with $Du\ll \mu$. The measure $\mu$ is merely assumed to be a positive finite Radon measure that satisfies $\mathcal L^n\ll \mu$, where $\mathcal L^n$ is the Lebesgue measure. For the integrand $F$ we need somewhat stronger assumptions, described in detail below. We refer the reader to Section \ref{sec:preliminaries} for precise notation and terminology.

With the choice $\mu=\mathcal L^n$, this type of result was derived in \cite{ADM} (see also \cite[Section 5.5]{AFP}), and later in \cite{FM} for integrands depending also on $x$ and $u$. The problem was studied without a nonnegativity assumption on $F$ in \cite{KR1}. These results relied mostly on blow-up techniques. The result in \cite{KR1} was generalized to $x$-dependent integrands in \cite[Theorem 10]{KR2}, relying on the theory of generalized Young measures, which were first introduced by DiPerna and Majda in \cite{DPM}. With a general measure $\mu$, the problem was studied in the case $p>1$ in \cite{ABF}, and also in \cite{JK}. In a more general setting of a metric measure space, the problem was studied in \cite{HKLL}.

We first show that the functional \eqref{eq:definition of functional in intro}, defined for general $u\in\BV(\Omega;\R^N)$ by relaxation, has an integral representation
\[
\int_{\Omega}F\left(\frac{dDu}{d\mu}\right)\,d\mu+\int_{\Omega}F^{\infty}\left(\frac{dD^{s,\mu}u}{d|D^{s,\mu}u|}\right)\,d|D^{s,\mu}u|,
\]
where $D^{s,\mu}u$ is the singular part of $Du$ with respect to $\mu$.
Here we require the integrand $F:\R^{N\times n}\to \R$ be nonnegative and quasiconvex, with linear growth $m|A|\le F(A)\le M(1+|A|)$ for some $0<m\le M$ and all $A\in \R^{N\times n}$, and with a continuous recession function $F^{\infty}$.

Our proof will rely heavily on the theory of generalized Young measures, particularly results derived in \cite{KR2}. Once we have the above integral representation, we can derive Jensen's inequalities for generalized Young measures with respect to $\mu$, as was done in \cite[Theorem 9]{KR2} with respect to the Lebesgue measure. By using these inequalities, we can then prove the following lower semicontinuity theorem (Theorem \ref{thm:lower semicontinuity for $x$-dependent integrands}) which is the main result of this work:

\begin{theorem}\label{thm:lower semicontinuity result in intro}
Let $\Omega\subset \R^n$ be a bounded Lipschitz domain with inner boundary normal $\nu_{\Omega}$,
let $\mu$ be a positive finite Radon measure on $\Omega$ with $\mathcal L^n\ll \mu$, and
let $F\colon\overline{\Omega}\times\R^{N\times n}\to \R$ be a $\mu\times\mathcal{B}(\R^{N\times n})$-measurable integrand with linear growth $0\le F(x,A)\le M(1+|A|)$ for some $M\ge 0$, a continuous recession function $F^{\infty}$, and such that $A\mapsto F(x,A)$ is quasiconvex for each fixed $x\in\overline{\Omega}$. Then the functional
\begin{align*}
\mathcal F(u):=&\int_{\Omega}F\left(x,\frac{dDu}{d\mu}\right)\,d\mu+\int_{\Omega}F^{\infty}\left(x,\frac{dD^{s,\mu}u}{d|D^{s,\mu}u|}\right)\,d|D^{s,\mu}u|\\
&+\int_{\partial\Omega}F^{\infty}\left(x,\frac{u}{|u|}\otimes \nu_{\Omega}\right)|u|\,d\mathcal H^{n-1}
\end{align*}
is weakly* sequentially lower semicontinuous in $\BV(\Omega;\R^N)$.
\end{theorem}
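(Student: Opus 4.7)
The plan is to reduce the problem to an interior lower semicontinuity statement on a slightly larger Lipschitz domain by zero-extension, and then combine the $\mu$-based generalized Young measure machinery with the Jensen inequalities that the paper derives from its integral representation theorem.

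First I would extend the data. Pick a bounded Lipschitz domain $\Omega'$ with $\overline\Omega\subset\Omega'$, let $\mu':=\mu+\mathcal L^n\mres(\Omega'\setminus\overline\Omega)$, a positive finite Radon measure on $\Omega'$ with $\mathcal L^n\ll\mu'$ and $\mu'(\partial\Omega)=0$, and extend $F$ to $\overline{\Omega'}\times\R^{N\times n}$ by composing in the $x$-variable with the metric projection onto $\overline\Omega$; this preserves $\mu'\times\mathcal B$-measurability, quasiconvexity in $A$, the linear growth bound, and the continuous recession function. For $u\in\BV(\Omega;\R^N)$ the zero extension $\tilde u\in\BV(\Omega';\R^N)$ satisfies
\[
D\tilde u=Du\mres\Omega+(u\otimes\nu_\Omega)\,\mathcal H^{n-1}\mres\partial\Omega,
\]
and the boundary jump is $\mu'$-singular by construction. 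Consequently
\[
\mathcal F(u)=\widetilde{\mathcal F}(\tilde u)-C,\qquad C:=\int_{\Omega'\setminus\overline\Omega}F(x,0)\,d\mu',
\]
where $\widetilde{\mathcal F}$ is the analogue of $\mathcal F$ on $\Omega'$ with $\mu'$ in place of $\mu$ and no boundary term. Since $L^1$-convergence $\tilde u_j\to\tilde u$ on $\Omega'$ together with uniform $\BV$ bounds forces $\tilde u_j\stackrel{*}{\rightharpoonup}\tilde u$ in $\BV(\Omega';\R^N)$ whenever $u_j\stackrel{*}{\rightharpoonup}u$ in $\BV(\Omega;\R^N)$, and since $C$ is independent of $u$, the lower semicontinuity of $\mathcal F$ reduces to that of $\widetilde{\mathcal F}$.

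Next I would apply the fundamental theorem for generalized Young measures in the $\mu'$-adapted form used earlier in the paper: along a subsequence the derivatives $D\tilde u_j$ generate $(\nu_x,\lambda_\nu,\nu_x^\infty)$ on $\Omega'$ with
\[
\liminf_{j\to\infty}\widetilde{\mathcal F}(\tilde u_j)\ge\int_{\Omega'}\langle\nu_x,F(x,\cdot)\rangle\,d\mu'+\int_{\Omega'}\langle\nu_x^\infty,F^\infty(x,\cdot)\rangle\,d\lambda_\nu.
\]
The barycenters encode the Radon--Nikodym derivatives of $D\tilde u$: one has $[\nu_x]+\tfrac{d\lambda_\nu^a}{d\mu'}(x)[\nu_x^\infty]=\tfrac{dD\tilde u}{d\mu'}(x)$ for $\mu'$-a.e.\ $x$, and $[\nu_x^\infty]=\tfrac{dD^{s,\mu'}\tilde u}{d|D^{s,\mu'}\tilde u|}(x)$ for $|D^{s,\mu'}\tilde u|$-a.e.\ $x$ in the support of $\lambda_\nu^s$. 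Applying the $\mu'$-based bulk Jensen inequality at $\mu'$-a.e.\ $x$ and the recession-function Jensen inequality at $\lambda_\nu^s$-a.e.\ $x$, and integrating over $\Omega'$, yields
\[
\int_{\Omega'}\langle\nu_x,F(x,\cdot)\rangle\,d\mu'+\int_{\Omega'}\langle\nu_x^\infty,F^\infty(x,\cdot)\rangle\,d\lambda_\nu\ge\widetilde{\mathcal F}(\tilde u)=\mathcal F(u)+C,
\]
which combined with the previous display gives $\liminf_j\mathcal F(u_j)\ge\mathcal F(u)$.

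The principal obstacle is the derivation of the $\mu$-based Jensen inequalities in the full $x$-dependent setting. In the $\mu=\mathcal L^n$ case of \cite{KR2}, these inequalities are established by a blow-up analysis that leans heavily on the scaling and translation invariance of Lebesgue measure. For a general $\mu$ satisfying only $\mathcal L^n\ll\mu$ one must replace such blow-ups with tangent measures of $\mu$ at typical points and verify that the integral representation of the relaxed functional (the paper's first main result) localises correctly under the resulting blow-up procedure. The $x$-dependence is carried through by exploiting continuity of $x\mapsto F^\infty(x,\cdot)$ together with a Scorza Dragoni reduction of the $\mu\times\mathcal B$-measurable bulk integrand to a continuous one off a small $\mu$-null set, but carrying bulk and recession Jensen inequalities through this reduction simultaneously is the most delicate point.
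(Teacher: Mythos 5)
Your reduction to a larger domain is essentially the one the paper itself uses: its proof also takes a bounded Lipschitz domain $\Omega'\Supset\Omega$, extends $u$ by zero (so that the trace term is absorbed into $D^{s,\mu^e}u^e$), sets $\mu^e=\mu\mres\Omega+\mathcal L^n\mres(\R^n\setminus\Omega)$, extends $F^{\infty}$ by Tietze, generates a generalized Young measure with respect to $\mu^e$ along a subsequence, and concludes from the pointwise Jensen inequalities. (The paper extends $F$ outside $\overline\Omega$ as the positively $1$-homogeneous function built from the extended recession function, so the new region contributes nothing, rather than subtracting a constant $C$; this is only bookkeeping. One point you pass over too quickly: since the integrand $F(x,\cdot)$ varies with $x$, you must apply the Jensen inequalities with exceptional null sets that do not depend on the integrand. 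This is exactly Corollary \ref{cor:Jensens inequalities wrt mu}, obtained by a separability argument over $\{T(F):F\in\mathbf{R}\cap\mathbf{Q},\ F\ge 0\}$; a Scorza Dragoni reduction is neither used nor needed, because only $\mu$-measurability in $x$ is required once the exceptional sets are uniform.)

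The genuine gap is where you yourself locate it: the Jensen inequalities with respect to $\mu$ are invoked but not proved, and the route you sketch for them (tangent measures of $\mu$, blow-ups adapted to $\mu$, localisation of the representation under blow-up) is not the paper's and would be considerably harder to carry out --- at $\mu^s$-generic points a measure satisfying only $\mathcal L^n\ll\mu$ has no useful blow-up structure. The paper avoids blow-ups entirely at this stage: it first proves the integral representation \eqref{eq:integral representation} of the relaxed functional $\mathcal F_*$ (Propositions \ref{prop:estimate from below} and \ref{prop:estimate from above}), deduces that the representation is $L^1$-lower semicontinuous (Proposition \ref{prop:lower semicontinuity}), and then obtains Theorem \ref{thm:Jensens inequalities wrt mu} by writing this lower semicontinuity inequality on balls and differentiating with the Besicovitch differentiation theorem. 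The hard analysis is thereby concentrated in the lower bound for the representation, namely Lemmas \ref{lem:estimate from below} and \ref{lem:estimate from below for singular part of mu}: there one uses the Lebesgue-measure Jensen inequalities of \cite{KR2}, a truncation $a\wedge m$ of the density of $\mu$ to manufacture admissible representation integrands, and Alberti's rank one theorem on the set carrying $\mu^s$. Without this chain of results, or a genuinely worked-out substitute, your argument reduces the theorem to its main unproven ingredient rather than proving it.
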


We remark that an easier proof is possible when the Radon--Nikodym derivative of $\mu$ with respect to Lebesgue measure is bounded, and hence that the main contribution
is the proof covering the general case. This proof seems to require the assumption about existence of a continuous recession function for the integrand $F$.

\section{Preliminaries}\label{sec:preliminaries}

\subsection{Notation}
For $N,n\in\N$, the matrix space $\R^{N\times n}$ will always be equipped with the Euclidean norm $|A|:=\left(\sum_{i=1}^N \sum_{j=1}^n A_j^i\right)^{1/2}$, where $i$ and $j$ are the row and column indices, respectively.
We denote by $B(x,r)$ the open ball in $\R^{n}$ with center $x$ and radius $r$. We denote by $\mathbb{B}^{n}$ the open unit ball in $\R^{n}$ and by $\partial \mathbb{B}^{n}$ the unit sphere. For $a\in\R^N$ and $b\in\R^n$, we can define the tensor product $a\otimes b =a b^T\in \R^{N\times n}$.

We denote the $n$-dimensional Lebesgue measure by $\mathcal L^n$ and the $s$-di\-men\-sio\-nal Hausdorff measure by $\mathcal H^s$.
Given any measure $\nu$, the restriction of $\nu$ to a set $A$ is denoted by  $\nu\mres A$, that is, $\nu\mres A(B)=\nu(A\cap B)$. The Borel $\sigma$-algebra on a set $E\subset \R^n$ is denoted by $\mathcal B(E)$. For open sets $\Omega,\Omega'\subset \R^n$, by $\Omega\Subset \Omega'$ we mean that $\overline{\Omega}\subset \Omega'$ and that $\overline{\Omega}$ is compact. We denote by $\mathbbm{1}_E$ the characteristic function of a set $E$.

If $X$ is a locally compact separable metric space (usually an open or closed subset of $\R^n$), let $C_c(X;\R^{l})$ be the space of continuous $\R^{l}$-valued functions with compact support in $X$ and let $C_0(X;\R^{l})$ be its completion with respect to the $\Vert \cdot\Vert_{\infty}$-norm, $l\in\N$.
We denote by $\mathcal M(X;\R^{l})$ the Banach space of vector-valued finite Radon measures, equipped with the \emph{total variation norm} $|\mu|(X)<\infty$. By the Riesz representation theorem, $\mathcal M(X;\R^{l})$ can be identified with the dual space of $C_0(X;\R^{l})$ through the duality pairing 
$\langle\phi,\mu\rangle:=\int_X \phi\cdot d\mu:=\sum_{i=1}^l \int_X \phi_i \,d\mu_i$. Thus weak* convergence $\mu_j\overset{*}{\rightharpoondown}\mu$ in $\mathcal M(X)$ means $\langle\phi,\mu_j\rangle\to \langle\phi,\mu\rangle$ for all $\phi\in C_0(X;\R^{l})$. We denote the set of positive measures and probability measures by $\mathcal M^+(X)$ and $\mathcal M^1(X)$, respectively.

For a vector-valued Radon measure $\gamma\in\mathcal M(X;\R^{l})$ and a positive Radon measure $\mu\in\mathcal M^+(X)$, we can write the \emph{Radon-Nikodym decomposition} $\gamma=\gamma^a+\gamma^s=\frac{d\gamma}{d\mu}\mu+\gamma^s$ of $\gamma$ with respect to $\mu$, where $\frac{d\gamma}{d\mu}\in L^1(X,\mu;\R^l)$.

We write
\[
\vint{\Omega}f\,d\mu:=\frac{1}{\mu(\Omega)}\int_{\Omega}f\,d\mu
\]
for integral averages (whenever they are defined).


For sets $E\subset \R^n$, $F\subset \R^l$ open or closed, a \emph{parametrized measure} $(\nu_x)_{x\in E}\subset \mathcal M(F)$ is a mapping from $E$ to the set $\mathcal M(F)$ of Radon measures on $F$. It is said to be \emph{weakly* $\mu$-measurable}, for $\mu\in\mathcal M^+(E)$, if $x\mapsto \nu_x(B)$ is $\mu$-measurable for all Borel sets $B\in\mathcal{B}(F)$ (it suffices to check this for all relatively open sets).
Equivalently, $(\nu_x)_{x\in E}$ is weakly* $\mu$-measurable if the function $x\mapsto \int_F f(x,y)\,d\nu_x(y)$ is $\mu$-measurable for every bounded Borel function $f:E\times F\to \R$ (see \cite[Proposition 2.26]{AFP}). We denote by $L^{\infty}_{w*}(E,\mu;\mathcal M(F))$ the set of all weakly* $\mu$-measurable parametrized measures $(\nu_x)_{x\in E}\subset \mathcal M(F)$ with the property that $\esssup_{x\in E}|\nu_x|(F)<\infty$ (the essential supremum with respect to $\mu$). We omit $\mu$ in the notation if $\mu=\mathcal L^n$.


\subsection{Functions of bounded variation}

The theory of $\BV$ functions presented in this section can be found in e.g. the monographs \cite{AFP,EvGa,Zie}, and we will give specific references only for a few key results.
Let $\Omega\subset \R^n$ be an open set. A function $u\in L^1(\Omega;\R^N)$ is a \emph{function of bounded variation}, denoted by $u\in\BV(\Omega;\R^N)$, if its distributional derivative is a bounded $\R^{N\times n}$-valued Radon measure. This means that there exists a (unique) measure $Du\in \mathcal M(\R^n;\R^{N\times n})$ such that for all $\psi\in C_c^1(\Omega)$, the integration-by-parts formula
\[
\int_{\Omega}\frac{\partial\psi}{\partial x_j}u^i\,d\mathcal L^n=-\int_{\Omega}\psi\,dDu_j^i, \qquad i=1\,\ldots N,\ \ j=1,\ldots,n
\]
holds. We write the Radon-Nikodym decomposition of the variation measure as $Du=\nabla u\,\mathcal L^n\mres \Omega+D^s u$.

The space $\BV(\Omega;\R^N)$ is a Banach space endowed with the norm
\[
\Vert u\Vert_{\BV(\Omega;\R^N)}:=\Vert u\Vert_{L^1(\Omega;\R^N)}+|Du|(\Omega).
\]
Furthermore, we say that a sequence $(u_j)\subset \BV(\Omega;\R^N)$ converges weakly* to $u\in\BV(\Omega;\R^N)$ if $u_j\to u$ strongly in $L^1(\Omega;\R^N)$ and $Du_j\overset{*}{\rightharpoondown}Du$ in $\mathcal M(\Omega,\R^{N\times n})$. A norm-bounded sequence in $\BV(\Omega;\R^N)$, i.e.
\[
\sup_{j\in\N}(\Vert u\Vert_{L^1(\Omega;\R^N)}+|Du_j|(\Omega))<\infty,
\]
always has a weakly* converging subsequence. Conversely, a weakly* converging sequence is norm-bounded in $\BV(\Omega,\R^N)$, see \cite[Proposition 3.13]{AFP}. If $u_j\to u$ in $L^1(\Omega;\R^N)$ and $|Du_j|(\Omega)\to |Du|(\Omega)$, we say that the $u_j$ converge to $u$ \emph{strictly}. If even
\[
\langle Du_j\rangle(\Omega)\to\langle Du\rangle(\Omega),
\]
where for a measure $\nu\in\mathcal M(\R^n;\R^{N\times n})$ with Radon-Nikodym decomposition $\nu=a\,\mathcal L^n+\mu^s$, we define the measure (related to the minimal surface functional)
\[
\langle\nu\rangle(A):=\int_A\sqrt{1+|a|^2}\,d\mathcal L^n+|\mu^s|(A),\qquad A\in \mathcal B(\R^n),
\]
then we speak of \emph{$\langle\cdot\rangle$-strict convergence}. This notion is stronger than strict convergence (this follows e.g. from Theorem \ref{thm:Reshetnyak} below), and one can show that it implies that $\langle Du_j\rangle\overset{*}{\rightharpoondown}\langle Du\rangle$ as measures.

For any bounded open set $\Omega\subset \R^n$ and $v\in\BV(\Omega;\R^N)$, we can define the Dirichlet class
\[
\BV_v(\Omega;\R^N):=\left\{u\in\BV(\Omega;\R^N):\,w\in\BV(\R^n;\R^N)\textrm{ and }|Dw|(\partial\Omega)=0\right\},
\]
where
\[
w:=
\begin{cases}
u-v &\textrm{in }\Omega,\\
0 &\textrm{in }\R^n\setminus\Omega.
\end{cases}
\]
The following lemma is proved in e.g. \cite[Lemma 1]{KR2}.

\begin{lemma}\label{lem:smooth area-strict approximation of BV}
Let $\Omega\subset \R^n$ be a bounded open set, and let $u\in\BV(\Omega;\R^N)$. Then there exists $(v_j)\subset \BV_u(\Omega;\R^N)\cap C^{\infty}(\Omega;\R^N)$ such that $v_j\to u$ $\langle\cdot\rangle$-strictly in $\Omega$.
\end{lemma}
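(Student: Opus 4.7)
\emph{Plan.} I would follow the partition-of-unity mollification of Anzellotti--Giaquinta. First, fix an exhaustion $\Omega_1\Subset \Omega_2\Subset\cdots$ with $\bigcup_k \Omega_k=\Omega$ and $|Du|(\partial\Omega_k)=0$, set $\Omega_0:=\emptyset$, and form the overlapping shells $U_k:=\Omega_{k+1}\setminus\overline{\Omega_{k-1}}$, so that each point of $\Omega$ lies in at most two of them and each $U_k$ is compactly contained in $\Omega$. Choose a smooth partition of unity $(\eta_k)$ subordinate to $(U_k)$, i.e.\ $\supp\eta_k\Subset U_k$, $0\le\eta_k\le 1$, and $\sum_k\eta_k\equiv 1$ on $\Omega$. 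Let $(\rho_\varepsilon)$ be a standard mollifier. For each $j\in\N$ and each $k$, choose $\varepsilon_k^j>0$ so small that $\supp(\rho_{\varepsilon_k^j}*(\eta_k u))\subset U_k$, that $\varepsilon_k^j < \tfrac{1}{2}\dist(U_k,\partial\Omega)$, and that
\[
\bigl\|\rho_{\varepsilon_k^j}*(\eta_k u)-\eta_k u\bigr\|_{L^1}+\bigl\|\rho_{\varepsilon_k^j}*(u\otimes\nabla\eta_k)-u\otimes\nabla\eta_k\bigr\|_{L^1}\le 2^{-k}/j.
\]
Define $v_j:=\sum_{k\ge 1}\rho_{\varepsilon_k^j}*(\eta_k u)$. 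The sum is locally finite in $\Omega$, so $v_j\in C^\infty(\Omega;\R^N)$, and the summable error bound forces $v_j\to u$ in $L^1(\Omega;\R^N)$.

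\emph{Derivative and $\langle\cdot\rangle$-strict convergence.} Since $\sum_k\nabla\eta_k=0$ in $\Omega$ and mollification commutes with distributional derivatives, a direct computation gives
\[
Dv_j=\sum_{k\ge 1}\rho_{\varepsilon_k^j}*\bigl(\eta_k\,Du+u\otimes\nabla\eta_k\,\mathcal L^n\bigr),
\]
where the collective cross-term $\sum_k \rho_{\varepsilon_k^j}*(u\otimes\nabla\eta_k)$ tends to $0$ in $L^1(\Omega;\R^{N\times n})$ as $j\to\infty$, by the telescoping identity and the $2^{-k}/j$ bound. Because the area integrand $F_0(A):=\sqrt{1+|A|^2}$ is convex with $F_0^\infty(A)=|A|$, and each $\rho_{\varepsilon_k^j}$ is a probability density, Jensen's inequality applied to each convolution together with the at-most-two-shell overlap yields
\[
\limsup_{j\to\infty}\langle Dv_j\rangle(\Omega)\le \langle Du\rangle(\Omega).
\]
The opposite inequality follows from the classical Reshetnyak--Serrin lower semicontinuity of $\nu\mapsto\int F_0(d\nu/d\mathcal L^n)\,d\mathcal L^n+|\nu^s|(\Omega)$ along $L^1$-convergence of primitives. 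Together, these produce $\langle Dv_j\rangle(\Omega)\to\langle Du\rangle(\Omega)$, i.e.\ $\langle\cdot\rangle$-strict convergence.

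\emph{Trace condition and main obstacle.} It remains to verify $v_j\in\BV_u(\Omega;\R^N)$, i.e.\ that the extension $w_j$ of $v_j-u$ by zero to $\R^n$ belongs to $\BV(\R^n;\R^N)$ with $|Dw_j|(\partial\Omega)=0$. Here the choice $\varepsilon_k^j<\tfrac{1}{2}\dist(U_k,\partial\Omega)$ is crucial: it ensures that in every neighbourhood of $\partial\Omega$ only shells with arbitrarily small mollification scale contribute to $v_j$, so testing $|Dw_j|$ against a cutoff localised in $\{\dist(\cdot,\partial\Omega)<r\}$ and letting $r\downarrow 0$ yields no concentrated mass on $\partial\Omega$. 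The principal obstacle is obtaining the \emph{$\langle\cdot\rangle$-strict} rather than merely strict form of convergence: this is what forces the partition-of-unity construction (a single global mollification would fail), and the argument depends delicately on the cancellation $\sum_k u\otimes\nabla\eta_k=0$ together with the Jensen estimate for the convex area integrand.
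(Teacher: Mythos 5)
Your proposal is correct and is essentially the proof of the source the paper itself cites for this lemma (\cite[Lemma 1]{KR2}, going back to the Anzellotti--Giaquinta mollification scheme): the same exhaustion and partition of unity, the cancellation $\sum_k\nabla\eta_k=0$ to reduce the cross terms to an $L^1$-error of size $1/j$, a Jensen/convexity estimate for the area integrand to get $\limsup_j\langle Dv_j\rangle(\Omega)\le\langle Du\rangle(\Omega)$, $L^1$-lower semicontinuity for the reverse inequality, and the support condition $\supp\bigl(\rho_{\varepsilon_k^j}*(\eta_k u)\bigr)\subset U_k$ to place $v_j$ in $\BV_u(\Omega;\R^N)$. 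Two small touch-ups when writing it out: the sharp upper bound comes from splitting $1=\sum_k\eta_k$ inside $\sqrt{1+|\cdot|^2}$ (equivalently, mollifying the vector measure $(\eta_k\,\mathcal L^n,\eta_k\,Du)$, for which it is convenient to also require $\|\rho_{\varepsilon_k^j}*\eta_k-\eta_k\|_{L^1}\le 2^{-k}/j$), not from the at-most-two-shell overlap; and the cleanest route to $|Dw_j|(\partial\Omega)=0$ is to note that $\sum_k\bigl|\rho_{\varepsilon_k^j}*(\eta_k Du)-\eta_k Du\bigr|(\R^n)\le 2|Du|(\Omega)$, so the series defining $Dw_j$ converges in total variation norm while every partial sum is supported in a compact subset of $\Omega$.
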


\subsection{Generalized Young measures}\label{sec:generalized Young measures}

Most of the theory of generalized Young measures presented in this section is derived in \cite{KR2}.

The symbol $\Omega$ will always denote a bounded open set in $\R^n$.
We will need the following linear transformations mapping $C(\Omega\times\R^{l})$ to $C(\Omega\times\mathbb B^{l})$ and back, where $\mathbb B^{l}$ was the open unit ball in $\R^{l}$: for $f\in C(\Omega\times\R^{l})$ and $g\in C(\Omega\times\mathbb B^{l})$, define
\[
(Tf)(x,\hat{A}):=(1-|\hat{A}|)f\left(x,\frac{\hat{A}}{1-|\hat{A}|}\right),\ \ x\in\Omega,\, \ \hat{A}\in \mathbb B^{l},\quad\textrm{and}
\]
\[
(T^{-1}g)(x,A):=(1+|A|)g\left(x,\frac{A}{1+|A|}\right),\ \ x\in\Omega,\, \ A\in \R^{l}.
\]
It is an easy calculation to verify that $T^{-1}Tf=f$ and $T T^{-1}g=g$. We consider the property 
\begin{equation}\label{eq:definition of E}
Tf\textrm{ extends to a bounded continuous function on }\overline{\Omega\times\mathbb B^{l}}.
\end{equation}
In particular, this entails that $f$ has \emph{linear growth at infinity}, that is, there exists a constant $M\ge 0$ (in fact, $M=\Vert Tf\Vert_{L^{\infty}(\overline{\Omega\times \mathbb B^{l}})}$ will do) such that
\[
|f(x,A)|\le M(1+|A|)\quad\textrm{for all }x\in\overline{\Omega},\ A\in\R^{l}.
\]
We collect all such integrands into the set
\[
\mathbf{E}(\Omega;\R^{l}):=\{f\in C(\Omega\times\R^{l}):\, f\textrm{ satisfies }\eqref{eq:definition of E}\}.
\]
For $f\in\mathbf{E}(\Omega;\R^l)$, the \emph{recession function}
$f^{\infty}\colon\overline{\Omega}\times\R^{l}\mapsto \R$ is defined by
\begin{equation}\label{eq:definition of recession function}
f^{\infty}(x,A):=\lim_{\substack{x'\to x\\ A'\to A\\ t\to\infty}}\frac{f(x',tA')}{t},\quad x\in\overline{\Omega},\ A\in \R^{l}.
\end{equation}
The limit exists since it agrees with
$Tf$ on $\overline{\Omega}\times\partial\mathbb B^{N\times n}$, as
can be seen by substituting $t=s/(1-s)$, $s\in (0,1)$, and letting $s\to 1$.
The recession function is clearly positively $1$-homogenous in $A$, that is,
$f^{\infty}(x,sA)=s f^{\infty}(x,A)$ for all $s\ge 0$, and thus takes finite values. 

We also consider a second class of integrands that is larger than $\mathbf{E}(\Omega;\R^{l})$ and (partially) dispenses with continuity in the $x$-variable. A \emph{Car\-ath\-\'eo\-dory function} is an $\mathcal L^n\times \mathcal B(\R^{l})$-measurable function
$f\colon\overline{\Omega}\times \R^{l}\to \R$ such that $A\mapsto f(x,A)$ is continuous for almost every $x\in\overline{\Omega}$. In fact, it can be shown that it suffices to check measurability of $x\mapsto f(x,A)$ for all fixed $A\in \R^{l}$ (see for example \cite[Proposition 5.6]{AFP}). With this notion, the \emph{representation integrands} are defined as follows:
\begin{equation}\label{eq:definition of representation integrands}
\begin{split}
\mathbf{R}(\Omega;\R^{l}):=\{f\colon\overline{\Omega}\times\R^{l}\to \R:\ 
f \textrm{ Carath\'eodory with linear growth}&\\
 \textrm{at infinity and }\exists f^{\infty}\in C(\overline{\Omega}\times\R^{l})\}&.
\end{split}
\end{equation}

A function $f\colon\R^{N\times n}\to \R$ is said to be \emph{quasiconvex}, which we denote by $f\in\mathbf{Q}(\R^{N\times n})$, if $f$ is Borel measurable, locally bounded from below, and for some bounded Lipschitz domain $\omega\subset \R^n$ and every $A\in\R^{N\times n}$ it holds that
\[
|\omega|f(A)\le \int_{\omega}f(A+\nabla \psi(x))\,d\mathcal L^n(x)\quad \textrm{for all }\psi\in W_0^{1,\infty}(\omega;\R^N).
\]
This definition does not depend on the particular choice of the Lipschitz domain $\omega$ (by an exhaustion argument) and it can be shown that quasiconvex functions are \emph{rank one convex}, meaning that they are convex along rank one lines (see for example \cite[Proposition 5.41]{AFP}). See \cite{BD} for more on quasiconvexity.

A quasiconvex function does not necessarily have a recession function $f^{\infty}$ in the sense of \eqref{eq:definition of recession function} (see \cite[Theorem 2]{Mul} for a counterexample),
and the notion can be relaxed in the following way: for $f\colon\R^{N\times n}\to \R$ the \emph{generalized recession function} $f^{\#}\colon\R^{N\times n}\to \R\cup \{\pm\infty\}$ is defined by
\[
f^{\#}(A):=\limsup_{\substack{ A'\to A\\ t\to \infty}}\frac{f(tA')}{t},\quad A\in \R^{N\times n}.
\]
Quasiconvex functions are globally Lipschitz continuous (see for example \cite[Lemma 2.2]{BKK}) and hence for quasiconvex $f$
\begin{equation}\label{eq:definition of generalized recession function}
f^{\#}(A)=\limsup_{t\to\infty}\frac{f(tA)}{t},\quad A\in \R^{N\times n}.
\end{equation}
By rank one convexity, the above holds as a limit for all matrices $A$ of rank one.

We have the following version of Reshetnyak's Continuity Theorem, see the appendix of \cite{KR1}, as well as \cite[Theorem 3]{Resh} or \cite[Theorem 2.39]{AFP} for the original result stated for $1$-homogenous functions $f$.

\begin{theorem} \label{thm:Reshetnyak}
Let $(\gamma_j)\subset \mathcal M(\overline{\Omega};\R^{l})$, $\gamma \in\mathcal M(\overline{\Omega};\R^{l})$ with Radon-Nikodym decompositions
\[
\gamma_j=a_j\,\mathcal L^n\mres\Omega+\gamma_j^s,\qquad \gamma=a\,\mathcal L^n\mres\Omega+\gamma^s.
\]
If $\gamma_j\overset{*}{\rightharpoondown}\gamma$ in $\mathcal M(\overline{\Omega};\R^{l})$ and $\langle \gamma_j\rangle (\overline{\Omega})\to \langle \gamma \rangle (\overline{\Omega})$, then
for
\[
\mathcal F(\gamma):=\int_{\overline{\Omega}}f(x,a(x))\,d\mathcal L^n+\int_{\overline{\Omega}} f^{\infty}\left(x,\frac{d\gamma^s}{d|\gamma^s|}(x)\right)\,d|\gamma^s|(x)
\]
with $f\in\mathbf{E}(\Omega,\R^{l})$, we have
$\mathcal F(\gamma_j)\to \mathcal F(\gamma)$.
\end{theorem}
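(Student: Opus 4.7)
The plan is to reduce the claim to the classical Reshetnyak continuity theorem for continuous, positively $1$-homogeneous integrands (which the statement itself cites) via a projective lift: trade the $\R^{l}$-valued measure $\gamma_j$ for an $\R^{l+1}$-valued measure on $\overline\Omega$ whose total variation equals exactly $\langle\gamma_j\rangle$, and trade the linear-growth integrand $f$ for a genuinely $1$-homogeneous integrand on $\R^{l+1}$.

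Concretely, set $\tilde\gamma_j := (\gamma_j,\mathcal L^n\mres\overline\Omega)\in \mathcal M(\overline\Omega;\R^{l+1})$ and $\tilde\gamma := (\gamma,\mathcal L^n\mres\overline\Omega)$; from the decomposition $\tilde\gamma_j = (a_j,1)\,\mathcal L^n\mres\Omega + (\gamma_j^s,0)$ one reads off $|\tilde\gamma_j| = \langle\gamma_j\rangle$, and the hypotheses translate verbatim to $\tilde\gamma_j \overset{*}{\rightharpoondown}\tilde\gamma$ in $\mathcal M(\overline\Omega;\R^{l+1})$ together with $|\tilde\gamma_j|(\overline\Omega)\to|\tilde\gamma|(\overline\Omega)$. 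Define $h\colon\overline\Omega\times\R^{l+1}\to\R$ by $h(x,A,t):=t\,f(x,A/t)$ for $t>0$ and $h(x,A,0):=f^\infty(x,A)$, extended continuously and positively $1$-homogeneously to $\{t<0\}$ in any fashion (the extension is irrelevant, since $\tilde\gamma$ has direction in the closed upper half-sphere). A short computation using $1$-homogeneity on the absolutely continuous part, where $d\tilde\gamma_j/d|\tilde\gamma_j|=(a_j,1)/\sqrt{1+|a_j|^{2}}$, and the equator value on the singular part, where $d\tilde\gamma_j/d|\tilde\gamma_j|=(d\gamma_j^s/d|\gamma_j^s|,0)$, then yields
\[
\int_{\overline\Omega} h\!\left(x,\tfrac{d\tilde\gamma_j}{d|\tilde\gamma_j|}\right) d|\tilde\gamma_j| \;=\; \mathcal F(\gamma_j),
\]
and the same identity for $\gamma$. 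Applied with the integrand $h$, the classical Reshetnyak theorem then gives $\mathcal F(\gamma_j)\to\mathcal F(\gamma)$.

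The one genuine step is the continuity of $h$ across the equator $\{t=0\}$, and this is precisely where the assumption $f\in\mathbf E(\Omega;\R^{l})$ enters. The parametrisations $\hat A = A/(1+|A|)$ of $\mathbb B^{l}$ and $(A,t)/\sqrt{1+|A|^{2}}$ of the upper half-sphere are homeomorphic and agree on the common boundary $\partial\mathbb B^{l}\cong\{t=0\}$, so the continuous extension of $Tf$ to $\overline{\Omega\times\mathbb B^{l}}$ postulated by the definition of $\mathbf E$ is tautologically the same data as a continuous extension of $h$ across $\{t=0\}$. Equivalently, for any $(x_k,A_k,t_k)\to(x_0,A_0,0)$ with $t_k>0$, rewriting $t_k f(x_k,A_k/t_k) = |A_k|\cdot f(x_k,A_k/t_k)/|A_k/t_k|$ and invoking the defining limit \eqref{eq:definition of recession function} forces convergence to $f^\infty(x_0,A_0)$ when $A_0\ne 0$, while the degenerate case $A_0=0$ follows from the linear growth bound $|h(x,A,t)|\le M(|A|+t)$. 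Everything else is bookkeeping.
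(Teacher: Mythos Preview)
The paper does not actually prove Theorem~\ref{thm:Reshetnyak}; it is stated with a reference to the appendix of \cite{KR1} (and to \cite{Resh}, \cite{AFP} for the classical $1$-homogeneous version). So there is no in-paper proof to compare against.

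Your argument is correct and is precisely the standard reduction used in \cite{KR1}: lift $\gamma_j$ to $\tilde\gamma_j=(\gamma_j,\mathcal L^n)$ so that $|\tilde\gamma_j|=\langle\gamma_j\rangle$, and replace $f$ by the positively $1$-homogeneous $h(x,A,t)=t\,f(x,A/t)$ on $\{t>0\}$, $h(x,A,0)=f^\infty(x,A)$. The identification of continuity of $h$ across the equator with the $\mathbf E$-condition on $f$ is exactly right, and your two verifications (via the homeomorphism $\overline{\mathbb B^{l}}\cong$ closed upper hemisphere, and via the sequential limit using \eqref{eq:definition of recession function}) are both valid. One cosmetic point: to match the decomposition in the statement you should take the extra component to be $\mathcal L^n\mres\Omega$ rather than $\mathcal L^n\mres\overline\Omega$; this makes $\tilde\gamma_j=(a_j,1)\,\mathcal L^n\mres\Omega+(\gamma_j^s,0)$ literally the Radon--Nikodym decomposition and gives $|\tilde\gamma_j|=\langle\gamma_j\rangle$ on the nose. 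With that adjustment the proof is complete.
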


Let $\mu\in \mathcal M^+(\overline{\Omega})$, and assume that $\mu(\partial\Omega)=0$.

The set of all \emph{generalized Young measures} $\mathbf{Y}(\Omega,\mu;\R^{l})$ is defined to be the set of all triples $(\nu_x,\lambda_{\nu},\nu_x^{\infty})$ such that
\begin{alignat*}{2}
&(\nu_x)_x \in L_{w*}^{\infty}(\Omega,\mu;\mathcal M^1(\R^{l})),\qquad\qquad
&&\lambda_{\nu}\in\mathcal M^+(\overline{\Omega}),\\
&(\nu_x^{\infty})_x \in L_{w*}^{\infty}(\overline{\Omega},\lambda_{\nu};\mathcal M^1(\partial\mathbb B^{l})),
&&x\mapsto\langle |\cdot|,\nu_x \rangle\in \L^1(\Omega,\mu).
\end{alignat*}
Under the duality pairing
\begin{align*}
&\langle \! \langle f,\nu\rangle\! \rangle
:= \int_{\Omega}\langle f(x,\cdot),\nu_x\rangle\,d\mu(x)+\int_{\overline{\Omega}}\langle f^{\infty}(x,\cdot),\nu_x^{\infty}\rangle\,d\lambda_{\nu}(x)\\
&\quad =\int_{\Omega}\int_{\R^{N\times n}} f(x,A)\,d\nu_x(A)\,d\mu(x)+\int_{\overline{\Omega}}\int_{\partial\mathbb{ B}^{N\times n}}f^{\infty}(x,A)\,d\nu_x^{\infty}(A)\,d\lambda_{\nu}(x),
\end{align*}
where $f\in \mathbf{E}(\Omega;\R^{l})$ and $\nu\in\mathbf{Y}(\Omega,\mu;\R^{l})$, the space of Young measures can be considered a part of the dual space $\mathbf{E}(\Omega;\R^{l})^*$. We say that a sequence of Young measures $(\nu_j)\subset \mathbf{Y}(\Omega,\mu;\R^{l})$ converges weakly* to $\nu\in\mathbf{Y}(\Omega,\mu;\R^{l})$ if $\langle\!\langle f,\nu_j\rangle\!\rangle\to \langle\!\langle f,\nu\rangle\!\rangle$ for every $f\in \mathbf{E}(\Omega;\R^{l})$.

To every Radon measure $\gamma\in \mathcal M(\Omega;\R^{l})$, with Radon-Nikodym decomposition with respect to $\mu$ written as $\gamma=\frac{d\gamma}{d\mu}\,\mu+\gamma^{s,\mu}$, we associate an \emph{elementary Young measure} $\eps_{\gamma}\in\mathbf{Y}(\Omega,\mu;\R^{l})$ by
\[
(\eps_{\gamma})_x:=\delta_{\frac{d\gamma}{d\mu}(x)},\quad \lambda_{\eps_{\gamma}}:=|\gamma^{s,\mu}|,\quad (\eps_{\gamma})_x^{\infty}:=\delta_{p(x)},
\]
where $p:=\frac{d\gamma^{s,\mu}}{d|\gamma^{s,\mu}|}\in L^1(\Omega,|\gamma^{s,\mu}|;\partial\mathbb{B}^{l})$.

Crucially, we have the following.

\begin{theorem}\label{thm:generation of generalized Young measures for general mu}
Let $\mu\in\mathcal M^+(\overline{\Omega})$ with $\mu(\partial\Omega)=0$, and let $(\gamma_j)\subset \mathcal M(\overline{\Omega};\R^l)$ be a sequence of Radon measures that is bounded in the total variation norm, that is, $\sup_{j\in\N}|\gamma_j|(\overline{\Omega})<\infty$. Then there exists a subsequence (not relabeled) and a generalized Young measure $(\nu_x,\lambda_{\nu},\nu_x^{\infty})$ with
\begin{alignat}{2}
\nonumber &(\nu_x)_x \in L_{w*}^{\infty}(\Omega,\mu;\mathcal M^1(\R^l)),\qquad\qquad
&&\lambda_{\nu}\in\mathcal M^+(\overline{\Omega}),\\
\label{eq:property of Young measure wrt mu} &(\nu_x^{\infty})_x \in L_{w*}^{\infty}(\overline{\Omega},\lambda_{\nu};\mathcal M^1(\partial\mathbb B^l)),
&&x\mapsto\langle |\cdot|,\nu_x \rangle\in \L^1(\Omega,\mu),
\end{alignat}
such that $\langle\!\langle f,\eps_{\gamma_j}\rangle\!\rangle\to \langle\!\langle f,\nu\rangle\!\rangle$, or equivalently
\begin{equation}\label{eq:generation of generalized Young measures for general mu}
\begin{split}
& f\left(x,\frac{d\gamma_j}{d\mu}(x)\right)\,\mu+ f^{\infty}\left(x,\frac{d\gamma_j^{s,\mu}}{d|\gamma_j^{s,\mu}|}(x)\right)|\gamma_j^{s,\mu}|\\
&\qquad\qquad \overset{*}{\rightharpoondown} \langle f(x,\cdot),\nu_x\rangle \,\mu +\langle f^{\infty}(x,\cdot),\nu_x^{\infty}\rangle\,\lambda_{\nu}\quad \textrm{in }\mathcal M(\overline{\Omega})
\end{split}
\end{equation}
for every $f\in \mathbf{E}(\Omega;\R^{N\times n})$.
\end{theorem}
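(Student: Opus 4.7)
The plan is to encode each $\gamma_j$ as a bounded positive Radon measure on the compact space $K:=\overline{\Omega}\times\overline{\mathbb B}^l$ via the $T$-transform, extract a weak* limit by Banach--Alaoglu, and disintegrate this limit to manufacture the triple $(\nu_x,\lambda_\nu,\nu_x^\infty)$. Concretely, for each $j$ define $\sigma_j\in\mathcal M^+(K)$ as the sum of an \emph{interior graph}, the pushforward of $(1+|d\gamma_j/d\mu|)\mu\mres\Omega$ by $x\mapsto\bigl(x,\tfrac{d\gamma_j/d\mu(x)}{1+|d\gamma_j/d\mu|(x)}\bigr)$, and a \emph{boundary graph}, the pushforward of $|\gamma_j^{s,\mu}|$ by $x\mapsto\bigl(x,\tfrac{d\gamma_j^{s,\mu}}{d|\gamma_j^{s,\mu}|}(x)\bigr)$. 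A direct computation using $Tf(x,\hat A)=(1-|\hat A|)f(x,\hat A/(1-|\hat A|))$ on $\mathbb B^l$ and $Tf|_{\partial\mathbb B^l}=f^\infty$ gives $\int_K Tf\,d\sigma_j=\langle\!\langle f,\eps_{\gamma_j}\rangle\!\rangle$ for every $f\in\mathbf E(\Omega;\R^l)$, and $\sigma_j(K)=\mu(\Omega)+|\gamma_j|(\overline\Omega)$ is uniformly bounded. Since $T$ is an isomorphism from $\mathbf E(\Omega;\R^l)$ to the separable space $C(K)$, Banach--Alaoglu yields a subsequence with $\sigma_j\overset{*}{\rightharpoondown}\sigma\in\mathcal M^+(K)$, so $\langle\!\langle f,\eps_{\gamma_j}\rangle\!\rangle\to\int_K Tf\,d\sigma$ for every $f\in\mathbf E(\Omega;\R^l)$.

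Split $\sigma=\sigma^{int}+\sigma^{bd}$ with $\sigma^{int}:=\sigma\mres(\overline\Omega\times\mathbb B^l)$ and $\sigma^{bd}:=\sigma\mres(\overline\Omega\times\partial\mathbb B^l)$, and let $\pi:K\to\overline\Omega$ be the projection. The central technical step is to prove $\pi_\#\sigma^{int}\ll\mu$ with no mass on $\partial\Omega$. For each $\delta\in(0,1)$ pick $\chi_\delta\in C_c(\mathbb B^l)$ with $\chi_\delta=1$ on $\{|\hat A|\le 1-2\delta\}$ and $\supp\chi_\delta\subset\{|\hat A|<1-\delta\}$. Wherever $\chi_\delta(\hat A_j(x))\neq 0$, the relation $|\hat A_j|<1-\delta$ rearranges to $1+|d\gamma_j/d\mu|(x)<1/\delta$; meanwhile $\chi_\delta$ annihilates the boundary graph. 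Hence $\pi_\#(\chi_\delta\sigma_j)\le(1/\delta)\mu\mres\Omega$, and passing to the weak* limit gives $\pi_\#(\chi_\delta\sigma)\le(1/\delta)\mu$, so $\pi_\#(\chi_\delta\sigma)(\partial\Omega)\le(1/\delta)\mu(\partial\Omega)=0$. Letting $\delta\to 0$, monotone convergence produces $\pi_\#\sigma^{int}=g\mu$ for some $g\in L^1(\Omega,\mu)$ with $\pi_\#\sigma^{int}(\partial\Omega)=0$.

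Disintegrate $\sigma^{int}=\int_\Omega g(x)\rho_x\,d\mu(x)$ with $\rho_x\in\mathcal M^1(\mathbb B^l)$. Testing $\int_K Tf\,d\sigma_j\to\int_K Tf\,d\sigma$ against integrands $f(x,A)=\varphi(x)$, $\varphi\in C(\overline\Omega)$, for which the pre-limit telescopes to $\int\varphi\,d\mu$ (since $(1-|\hat A_j|)(1+|d\gamma_j/d\mu|)=1$ and $Tf$ vanishes on $\partial\mathbb B^l$), yields the mass balance $g(x)\int_{\mathbb B^l}(1-|\hat A|)\,d\rho_x(\hat A)=1$ for $\mu$-a.e. $x\in\Omega$; in particular $g\ge 1$ $\mu$-a.e. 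For $\mu$-a.e. $x\in\Omega$ define $\nu_x\in\mathcal M^1(\R^l)$ by
\[
\int_{\R^l} h(A)\,d\nu_x(A):=g(x)\int_{\mathbb B^l}(1-|\hat A|)\,h\!\left(\tfrac{\hat A}{1-|\hat A|}\right)d\rho_x(\hat A),
\]
which by the mass balance has total mass $1$. Set $\lambda_\nu:=\pi_\#\sigma^{bd}\in\mathcal M^+(\overline\Omega)$ and disintegrate $\sigma^{bd}=\int_{\overline\Omega}\nu_x^\infty\,d\lambda_\nu(x)$ with $\nu_x^\infty\in\mathcal M^1(\partial\mathbb B^l)$.

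Weak* $\mu$-measurability of $(\nu_x)$ and weak* $\lambda_\nu$-measurability of $(\nu_x^\infty)$ follow from the disintegration theorem together with measurability of $g$, and $\int_\Omega\langle|\cdot|,\nu_x\rangle\,d\mu=\int_{\overline\Omega\times\mathbb B^l}|\hat A|\,d\sigma^{int}\le\sigma(K)<\infty$ supplies the $L^1(\Omega,\mu)$-integrability. Reassembling the pieces, one verifies $\int_K Tf\,d\sigma=\int_\Omega\langle f(x,\cdot),\nu_x\rangle\,d\mu+\int_{\overline\Omega}\langle f^\infty(x,\cdot),\nu_x^\infty\rangle\,d\lambda_\nu=\langle\!\langle f,\nu\rangle\!\rangle$ for every $f\in\mathbf E(\Omega;\R^l)$, and the equivalent measure-valued convergence \eqref{eq:generation of generalized Young measures for general mu} is obtained by testing against products $\varphi(x) f(x,A)$ with $\varphi\in C(\overline\Omega)$. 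The main obstacle is the absolute-continuity step: showing $\pi_\#\sigma^{int}\ll\mu$ with no mass on $\partial\Omega$. It is precisely here that the hypothesis $\mu(\partial\Omega)=0$ enters, and the cutoff mechanism encodes the fact that any blow-up of $|d\gamma_j/d\mu|$ along the subsequence must concentrate toward the boundary sphere $\partial\mathbb B^l$ in the $\hat A$-direction, contributing to $\lambda_\nu\otimes\nu_x^\infty$ rather than to a singular part of the interior measure.
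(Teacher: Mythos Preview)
Your argument is correct and follows the standard compactification/disintegration route that underlies the results in \cite{KR2} to which the paper appeals. The paper itself gives no detailed argument---it simply asserts that the proof for $\mu=\mathcal L^n$ in \cite[Lemma 2, Corollary 2, Theorem 7]{KR2} ``runs through'' for general $\mu$---and what you have written is precisely that verification: encode each $\eps_{\gamma_j}$ as a measure on the compact space $\overline{\Omega}\times\overline{\mathbb B}^l$ via $T$, extract a weak* limit, and disintegrate. The one place where the passage from $\mathcal L^n$ to $\mu$ requires care is the absolute-continuity step $\pi_\#\sigma^{int}\ll\mu$, and your cutoff argument with $\chi_\delta$ handles it cleanly and correctly identifies where the hypothesis $\mu(\partial\Omega)=0$ is used. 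So your approach is not different from the paper's; rather, you have supplied the details that the paper leaves to the reader.
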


\begin{proof}
This is proved in the case $\mu=\mathcal L^n\mres \Omega$ in \cite[Lemma 2, Corollary 2, Theorem 7]{KR2}, but the proofs run through also if we replace the Lebesgue measure by a more general $\mu$.

See also \cite[Theorem 2.5]{AB} for a proof in the case $\gamma_j^s\equiv 0$.
\end{proof}

\begin{corollary}\label{cor:representation wrt mu}
In the above theorem, \eqref{eq:generation of generalized Young measures for general mu} holds also for every $\mu\times\mathcal{B}(\R^l)$-me\-asu\-ra\-ble $f\in\mathbf{R}(\Omega;\R^l)$.

In the case $\mu=\mathcal L^n$, \eqref{eq:generation of generalized Young measures for general mu} also holds for every Carath\'eodory integrand $f\colon\overline{\Omega}\times \R^{l}\to \R$ possessing a recession function $f^{\infty}\colon\overline{\Omega}\times \R^l\to \R$ in the sense of \eqref{eq:definition of recession function} for $(x,A)\in (\overline{\Omega}\setminus N)\times \R^l$, and $f^{\infty}$ is jointly continuous in $(\overline{\Omega}\setminus N)\times \R^l$, where $N\subset \overline{\Omega}$ is a Borel set with $(\mathcal L^n+\lambda_{\nu})(N)=0$.
\end{corollary}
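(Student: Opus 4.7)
The plan is to approximate an integrand $f \in \mathbf{R}(\Omega;\R^{l})$ by integrands $f_\eta \in \mathbf{E}(\Omega;\R^{l})$, apply Theorem~\ref{thm:generation of generalized Young measures for general mu} to the latter, and pass to the limit. The approximation is obtained from Scorza--Dragoni applied to the bounded Carath\'eodory function $Tf$ on $\overline{\Omega}\times\overline{\mathbb B^{l}}$: for each $\eta>0$ there exists a compact $K_\eta\subset\overline{\Omega}$ with $\rho(\overline{\Omega}\setminus K_\eta)<\eta$, where $\rho:=\mu+\lambda_\nu+\langle|\cdot|,\nu_\cdot\rangle\,\mu$, and such that $Tf|_{K_\eta\times\overline{\mathbb B^{l}}}$ is jointly continuous. (The recession part $Tf|_{\overline{\Omega}\times\partial\mathbb B^{l}}=f^\infty$ is already jointly continuous since $f\in\mathbf R$, so Scorza--Dragoni is only needed on the interior.) Tietze extension then furnishes $g_\eta\in C(\overline{\Omega}\times\overline{\mathbb B^{l}})$ agreeing with $Tf$ on $K_\eta\times\overline{\mathbb B^{l}}$ and satisfying $\|g_\eta\|_\infty\le \|Tf\|_\infty\le M$; setting $f_\eta:=T^{-1}g_\eta$ gives an element of $\mathbf{E}(\Omega;\R^{l})$ with $|f_\eta(x,A)|\le M(1+|A|)$ and $f_\eta=f$ on $K_\eta\times\R^{l}$. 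Theorem~\ref{thm:generation of generalized Young measures for general mu} then yields $\langle\!\langle f_\eta,\eps_{\gamma_j}\rangle\!\rangle\to\langle\!\langle f_\eta,\nu\rangle\!\rangle$ as $j\to\infty$.

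The next step is to estimate the discrepancies $|\langle\!\langle f-f_\eta,\cdot\rangle\!\rangle|$ uniformly. Since $f-f_\eta$ vanishes on $K_\eta\times\R^{l}$ and is bounded by $2M(1+|A|)$ elsewhere, one obtains
\[
|\langle\!\langle f-f_\eta,\nu\rangle\!\rangle|\le 2M\bigl[\mu(\overline{\Omega}\setminus K_\eta)+\textstyle\int_{\overline{\Omega}\setminus K_\eta}\langle|\cdot|,\nu_x\rangle\,d\mu+\lambda_\nu(\overline{\Omega}\setminus K_\eta)\bigr]< 2M\eta,
\]
\[
|\langle\!\langle f-f_\eta,\eps_{\gamma_j}\rangle\!\rangle|\le 2M\bigl[\mu(\overline{\Omega}\setminus K_\eta)+|\gamma_j|(\overline{\Omega}\setminus K_\eta)\bigr].
\]
To control the second line I would apply the already-proved case of the theorem to the integrand $(x,A)\mapsto|A|\in\mathbf{E}(\Omega;\R^{l})$, whose recession function is $|A|$ itself. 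This produces the weak* convergence $|\gamma_j|\overset{*}{\rightharpoondown}\sigma:=\langle|\cdot|,\nu_\cdot\rangle\,\mu+\lambda_\nu$ in $\mathcal M(\overline{\Omega})$ with convergence of total masses. The main obstacle is that $\overline{\Omega}\setminus K_\eta$ is open, so Portmanteau only gives a \emph{lower} bound on $\liminf_j|\gamma_j|(\overline{\Omega}\setminus K_\eta)$. I circumvent this by choosing $K_\eta$ with $\sigma(\partial K_\eta)=0$: given a preliminary Scorza--Dragoni compact $K'_\eta$, take $K_\eta:=\{x\in\overline{\Omega}:\phi(x)\ge t\}$ with $\phi(x):=\dist(x,\overline{\Omega}\setminus K'_\eta)$, choosing $t>0$ small enough (and outside the at most countable set of $t$ with $\sigma(\{\phi=t\})>0$) so that the $\rho$-measure of the complement is still less than $\eta$. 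With this choice, $\limsup_j|\gamma_j|(\overline{\Omega}\setminus K_\eta)\le\sigma(\overline{\Omega}\setminus K_\eta)<\eta$. Letting $j\to\infty$ and then $\eta\to 0$ proves the first assertion.

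For the second assertion, where $\mu=\mathcal L^n$ and $f^\infty$ need only be jointly continuous on $(\overline{\Omega}\setminus N)\times\R^{l}$ for some Borel set $N$ with $(\mathcal L^n+\lambda_\nu)(N)=0$, I would follow the same Scorza--Dragoni/Tietze pattern but adjoin $N$ to the exceptional set from the start: choose compacts $K_\eta\subset\overline{\Omega}\setminus N$ with $\rho(\overline{\Omega}\setminus K_\eta)<\eta$ on which both $Tf$ and its boundary trace $f^\infty$ are jointly continuous, and extend continuously as before. Since $\mathcal L^n(N)=\lambda_\nu(N)=0$, the right-hand side of~\eqref{eq:generation of generalized Young measures for general mu} is insensitive to the values of $f$ and $f^\infty$ on $N\times\R^{l}$, while the left-hand side is controlled by the same weak* convergence $|\gamma_j|\overset{*}{\rightharpoondown}\sigma$ using $\sigma(N)=0$ and an enclosure of $N$ in a slightly larger set with $\sigma$-null boundary. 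The error estimates from the first part then carry over verbatim, completing the proof.
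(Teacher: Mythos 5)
Your overall scheme (Lusin/Scorza--Dragoni applied to $Tf$, Tietze extension, application of Theorem \ref{thm:generation of generalized Young measures for general mu} to the resulting $\mathbf{E}$-integrands, and a limit passage) is the right framework, and it is essentially what the cited proof of \cite[Proposition 2]{KR2} is built on; note that the paper itself gives no argument beyond asserting that that proof ``runs through'' for general $\mu$. However, the step in which you control the error on the elementary Young measures is where the argument breaks. Concretely: your replacement of the preliminary compact $K'_\eta$ by $K_\eta=\{\phi\ge t\}$ with $\phi=\dist(\cdot,\overline{\Omega}\setminus K'_\eta)$ collapses whenever $K'_\eta$ has empty interior, for then $\overline{\Omega}\setminus K'_\eta$ is dense, $\phi\equiv 0$, and $K_\eta=\emptyset$. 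Lusin compacts generically do have empty interior: for $f(x,A)=\mathbbm{1}_E(x)|A|$ with $E$ and its complement of positive measure in every ball, any compact on which $f(\cdot,A)$ is continuous must have empty interior. More fundamentally, no choice of $K_\eta$ can make $\limsup_j|\gamma_j|(\overline{\Omega}\setminus K_\eta)$ small: when $K_\eta$ has empty interior the relatively open complement is dense, and the $\gamma_j$ may carry a fixed amount of mass on shrinking subsets of it (say $d\gamma_j/d\mu=j\,\mathbbm{1}_{A_j}$ with $\mu(A_j)=1/j$ and $A_j\subset\overline{\Omega}\setminus K_\eta$), so $|\gamma_j|(\overline{\Omega}\setminus K_\eta)$ need not converge to $\sigma(\overline{\Omega}\setminus K_\eta)$. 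Hence the crude bound $|f-f_\eta|\le 2M(1+|A|)$ off $K_\eta$, combined with total-variation estimates, cannot close the argument.

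The repair has to use the structure of $\mathbf{R}(\Omega;\R^l)$ more seriously. Since $f^\infty$ is continuous on all of $\overline{\Omega}\times\partial\mathbb B^{l}$, the function $Tf$ is continuous on the compact set $(K_\eta\times\overline{\mathbb B^{l}})\cup(\overline{\Omega}\times\partial\mathbb B^{l})$ (pasting lemma), so the Tietze extension can be chosen to agree with $Tf$ there; then $(f-f_\eta)^\infty\equiv 0$ and the singular/concentration contributions to the error vanish identically. One is left with $\int_{\overline{\Omega}\setminus K_\eta}(f-f_\eta)\bigl(x,\tfrac{d\gamma_j}{d\mu}\bigr)\,d\mu$, where the integrand is sublinear in $A$ but the density $\tfrac{d\gamma_j}{d\mu}$ may still blow up on the bad set; controlling this term requires a genuine sublinearity/equi-integrability argument, which is the actual content of \cite[Proposition 2]{KR2} and is missing from your proposal. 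The same gap carries over to your treatment of the second assertion.
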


Note that a Carath\'eodory function $f\colon\Omega\times \R^N\to \R$ is by definition $\mathcal L^n\times \mathcal B(\R^N)$-measurable, but here we need the assumption of $\mu\times\mathcal{B}(\R^N)$-measurability.

\begin{proof}
Again, this is proved in the case $\mu=\mathcal L^n\mres \Omega$ in \cite[Proposition 2]{KR2}, but the proof runs through also in the general case with the assumption of $\mu\times\mathcal{B}(\R^N)$-measurability.
\end{proof}

In particular, given $u\in\BV(\Omega;\R^N)$, we can associate to its derivative $Du\in\mathcal M(\Omega;\R^{N\times n})$ the Radon-Nikodym decomposition $Du=\frac{dDu}{d\mu}\,\mu+D^{s,\mu}u$, and then the elementary Young measure $\eps_{Du}\in \mathbf{Y}(\Omega,\mu;\R^{N\times n})$ with
\[
(\eps_{Du})_x:=\delta_{\frac{dDu}{d\mu}},\quad \lambda_{\eps_{Du}}:=|D^{s,\mu} u|,\quad (\eps_{Du})_x^{\infty}:=\delta_{p(x)},
\]
where $p:=\frac{D^{s,\mu} u}{|D^{s,\mu} u|}\in L^1(\Omega,|D^{s,\mu} u|;\partial \mathbb{B}^{N\times n})$.

For a norm-bounded sequence $(u_j)\subset \BV(\Omega;\R^N)$,
we say that the derivatives $Du_j$ \emph{generate} the generalized Young measure
\[
\nu=(\nu_x,\lambda_\nu,\nu_x^{\infty})\in \mathbf{Y}(\Omega;\R^{N\times n}),
\]
if for all $f\in \mathbf{E}(\Omega,\R^{N\times n})$ we have that $\langle\!\langle f,\eps_{Du_j}\rangle\!\rangle\to \langle\!\langle f,\nu\rangle\!\rangle$ for all $f\in \mathbf{E}(\Omega;\R^{N\times n})$, or equivalently
\begin{equation}\label{eq:generation of gradient Young measure wrt mu}
\begin{split}
& f\left(x,\frac{dDu_j}{d\mu}\right)\,\mu+ f^{\infty}\left(x,\frac{dD^{s,\mu} u_j}{d|D^{s,\mu} u_j|}(x)\right)|D^{s,\mu} u_j|\\
&\qquad\quad \overset{*}{\rightharpoondown} \langle f(x,\cdot),\nu_x\rangle \,\mu+\langle f^{\infty}(x,\cdot),\nu_x^{\infty}\rangle\,\lambda_{\nu}\qquad \textrm{in }\mathcal M(\overline{\Omega}).
\end{split}
\end{equation}
We call such a generalized Young measure a \emph{gradient Young measure}. Since $(u_j)$ is norm-bounded, we have $u_j\overset{*}{\rightharpoondown}u$ for some $u\in\BV(\Omega;\R^N)$.
The \emph{barycenter} of a generalized Young measure $\nu\in\mathbf{Y}(\Omega;\R^{N\times n})$ is defined as the measure
\[
\langle \id,\nu_x\rangle\, \mu+\langle\id,\nu_x^{\infty}\rangle\lambda_{\nu}.
\]
Note that by choosing $f$ to be the identity on $\R^{N\times n}$ in \eqref{eq:generation of gradient Young measure wrt mu} (componentwise, to be precise), we obtain that $Du$ is the restriction of the barycenter to $\Omega$.

In the case $\mu=\mathcal L^n$, we have the following Jensen's inequalities for gradient Young measures, which are part of \cite[Theorem 9]{KR2}.

\begin{theorem}\label{thm:Jensens inequalities for generalized Young measures}
Let $u\in\BV(\Omega;\R^N)$ and let $\nu\in\mathbf{Y}(\Omega;\R^{N\times n})$, $\nu=(\nu_x,\lambda_{\nu},\nu_x^{\infty})$ be a gradient Young measure with barycenter $Du$ and satisfying $\lambda(\partial\Omega)=0$. Then the following hold for any quasiconvex $f\colon\R^{N\times n}\to \R$ with linear growth (that is, $|F(A)|\le M(|A|+1)$ for all $A\in \R^{N\times n}$ and some $M\ge 0$):
\begin{align*}
& f(\nabla u(x))\le \langle f,\nu_x\rangle+\langle f^{\#},\nu_x^{\infty}\rangle\frac{d\lambda_{\nu}}{d\mathcal L^n}(x)\quad\textrm{for }\mathcal L^n\textrm{-almost every } x\in\Omega,\\
& f^{\#}\left(\frac{dD^s u}{d|D^s u|}\right)|D^s u|\le \langle f^{\#},\nu_x^{\infty}\rangle\lambda^s_{\nu}\quad\textrm{as measures}.
\end{align*}

\end{theorem}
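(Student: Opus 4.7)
The plan is to derive both inequalities by a blow-up/localization of $\nu$: at $\mathcal L^n$-a.e.\ $x_0\in\Omega$ (respectively $|D^su|$-a.e.\ $x_0$) one extracts a \emph{tangent} Young measure that is itself a gradient Young measure of very rigid, spatially homogeneous form, and then one applies the defining quasiconvexity inequality of $f$ to a suitably modified generating sequence of this tangent measure. Fix once and for all a generating sequence $(u_j)\subset \BV(\Omega;\R^N)$ whose gradients generate $\nu$ in the sense of \eqref{eq:generation of gradient Young measure wrt mu} with $\mu=\mathcal L^n$ (its existence is part of the definition of gradient Young measure, combined with Theorem~\ref{thm:generation of generalized Young measures for general mu}).

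\textbf{Regular points.} For $\mathcal L^n$-a.e.\ $x_0\in\Omega$ consider the rescaled maps
\[
u_j^r(y):= \frac{u_j(x_0+ry)-\vint{B(x_0,r)} u_j\,d\mathcal L^n}{r},\qquad y\in \mathbb B^n.
\]
A diagonal extraction along $r=r_k\to 0$ and $j=j_k\to\infty$ produces a gradient Young measure $\sigma^{x_0}\in\mathbf{Y}(\mathbb B^n;\R^{N\times n})$ generated by $(u_{j_k}^{r_k})$. Choosing $x_0$ to be simultaneously a Lebesgue point of $\nabla u$ and of the scalar functions $x\mapsto \langle \varphi_i,\nu_x\rangle$ and $x\mapsto \langle \varphi_i^\infty,\nu_x^\infty\rangle\tfrac{d\lambda_\nu}{d\mathcal L^n}$ for a countable dense family $(\varphi_i)\subset \mathbf{E}(\Omega;\R^{N\times n})$, and insisting that $\lambda_\nu^s$ has zero $\mathcal L^n$-density at $x_0$, one checks that $\sigma^{x_0}$ is spatially constant with $\sigma_y=\nu_{x_0}$, $\sigma_y^\infty=\nu_{x_0}^\infty$, and $\lambda_{\sigma^{x_0}}=\tfrac{d\lambda_\nu}{d\mathcal L^n}(x_0)\,\mathcal L^n\mres\mathbb B^n$; its barycenter on $\mathbb B^n$ is $\nabla u(x_0)\,\mathcal L^n\mres\mathbb B^n$, so $u_{j_k}^{r_k}\to \nabla u(x_0)\,y$ in $L^1(\mathbb B^n;\R^N)$. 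Combining Lemma~\ref{lem:smooth area-strict approximation of BV} with a boundary-adjustment step that forces agreement with the affine map $y\mapsto \nabla u(x_0)y$ near $\partial\mathbb B^n$, at a cost controlled by $\langle\cdot\rangle$-strict convergence, yields competitors $w_k\in W^{1,\infty}(\mathbb B^n;\R^N)$ with $w_k(y)=\nabla u(x_0)y$ on $\partial \mathbb B^n$ and
\[
\int_{\mathbb B^n} f(\nabla w_k)\,dy\;\longrightarrow\; |\mathbb B^n|\Bigl(\langle f,\nu_{x_0}\rangle+\langle f^\#,\nu_{x_0}^\infty\rangle\,\tfrac{d\lambda_\nu}{d\mathcal L^n}(x_0)\Bigr).
\]
Quasiconvexity then gives $|\mathbb B^n|f(\nabla u(x_0))\le \int_{\mathbb B^n} f(\nabla w_k)\,dy$ for every $k$, and passing to the limit proves the first inequality at $x_0$.

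\textbf{Singular points.} For $|D^su|$-a.e.\ $x_0$, blow up again, this time with the scaling that keeps $|Du_j^r|(\mathbb B^n)$ bounded on the singular scale. The resulting tangent Young measure $\tau^{x_0}$ is purely concentrated: $\tau_y^{x_0}=\delta_0$, $\tau_y^{x_0,\infty}=\nu_{x_0}^\infty$, and $\lambda_{\tau^{x_0}}$ is proportional to $\mathcal L^n\mres\mathbb B^n$. By Alberti's rank-one theorem $\tfrac{dD^su}{d|D^su|}(x_0)=a\otimes b$ is rank one, and the barycenter of $\tau^{x_0}$ points along this rank-one direction. The same quasiconvexity competitor argument, now used along the rank-one line $t\mapsto t\,a\otimes b$ where by \eqref{eq:definition of generalized recession function} the generalized recession function $f^\#$ is a genuine limit, yields the pointwise inequality
\[
f^\#\!\left(\tfrac{dD^su}{d|D^su|}(x_0)\right)\le \langle f^\#,\nu_{x_0}^\infty\rangle\,\tfrac{d\lambda_\nu^s}{d|D^su|}(x_0),
\]
which multiplied by $|D^su|$ is exactly the claimed measure inequality.

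\textbf{Main obstacle.} The principal difficulty is the construction of the tangent Young measures $\sigma^{x_0}$ and $\tau^{x_0}$ and the verification that they remain \emph{gradient} Young measures with the claimed rigid form. This fuses the weak*-measurability of $(\nu_x)$ and $(\nu_x^\infty)$ with a diagonal extraction compatible with countably many simultaneous Lebesgue-point conditions, and requires a boundary-adjustment lemma permitting affine boundary data to be imposed on the generators without perturbing their limiting Young measure; at singular points it depends on Alberti's rank-one theorem in order to reduce Jensen's inequality to its one-dimensional, rank-one convex form, which is the only form in which a $\mu=\mathcal L^n$-quasiconvex $f$ is known to satisfy a true Jensen inequality against its recession function.
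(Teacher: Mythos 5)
The paper offers no proof of this statement: Theorem \ref{thm:Jensens inequalities for generalized Young measures} is imported verbatim from \cite[Theorem 9]{KR2}, so the only meaningful comparison is with the argument in that reference. Your sketch does follow its broad strategy (localization to spatially homogeneous tangent Young measures at regular and singular points, quasiconvexity tested on boundary-adjusted generators, Alberti's rank-one theorem on the singular set), but as it stands it is a strategy outline in which the genuinely hard steps are asserted rather than proved. The existence and rigidity of the tangent measures $\sigma^{x_0}$, $\tau^{x_0}$ --- that they are again \emph{gradient} Young measures, spatially constant, with the three components you list --- is precisely the technical core of \cite{KR2} and cannot be waved through with ``one checks''.

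Two steps are not merely incomplete but incorrect as written. First, the claimed convergence $\int_{\mathbb B^n} f(\nabla w_k)\,dy\to|\mathbb B^n|\bigl(\langle f,\nu_{x_0}\rangle+\langle f^{\#},\nu_{x_0}^{\infty}\rangle\tfrac{d\lambda_{\nu}}{d\mathcal L^n}(x_0)\bigr)$ is unavailable for a general quasiconvex $f$ of linear growth: such an $f$ need not belong to $\mathbf{E}(\Omega;\R^{N\times n})$ (the paper itself cites \cite{Mul} for a quasiconvex integrand with no recession function in the sense of \eqref{eq:definition of recession function}), so the Young-measure pairing against $f$ with $f^{\#}$ in place of $f^{\infty}$ is not defined and no such limit exists in general. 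What is true, and what the argument needs, is only the one-sided bound $\limsup_k\int f(\nabla w_k)\le\cdots$, obtained by dominating $f$ from above by integrands $G_i\in\mathbf{SQ}(\R^{N\times n})$ with $G_i\searrow f$ and $G_i^{\infty}\searrow f^{\#}$ (the \cite[Lemma 6.3]{KK} device used repeatedly in this paper); this approximation must be made explicit. Second, and more seriously, the singular step does not close: $\nu_{x_0}^{\infty}$ is a probability measure on all of $\partial\mathbb B^{N\times n}$, and only its \emph{barycenter} is rank one. Testing quasiconvexity ``along the rank-one line $t\mapsto t\,a\otimes b$'' controls the left-hand side $f^{\#}(a\otimes b)$ but gives no comparison with $\langle f^{\#},\nu_{x_0}^{\infty}\rangle$, an average of $f^{\#}$ over directions that need not be rank one. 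Bridging this requires an additional ingredient --- either the full homogeneous tangent-measure machinery of \cite{KR2}, or the fact that a positively $1$-homogeneous quasiconvex function agrees with its convex envelope at rank-one matrices (\cite[Corollary 1.2]{KK}), which is exactly how the present paper argues in Lemma \ref{lem:estimate from below for singular part of mu} --- and your proposal supplies neither.
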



\section{The integral representation}
Let $F\colon\R^{N\times n}\mapsto \R$ be quasiconvex, with linear growth
\[
m|A|\le F(A)\le M(1+|A|)\qquad\text{for all }A\in \R^{N\times n},
\]
for some $0<m\le M$, such that the recession function $F^{\infty}$ exists in the sense of \eqref{eq:definition of recession function}. Let $\Omega\subset \R^{n}$ be a bounded open set with $\mathcal L^n(\partial\Omega)=0$, and let $\mu\in\mathcal M^+(\Omega)$ with $\mathcal L^n\ll\mu$. We define a Sobolev space with respect to $\mu$ by
\[
W^{1,1}_{\mu}(\Omega;\R^{N}):=\{u\in \BV(\Omega;\R^N):\,Du\ll \mu\}.
\]
We consider the functional
\begin{equation}\label{eq:definition by relaxation}
\begin{split}
\mathcal F_{*}(u,\Omega)
\ :=\inf\Big\{ \liminf_{j\to \infty} \int_{\Omega} F\left(\frac{d Du_j}{d\mu}\right) d\mu,\quad &u_j\in W^{1,1}_{\mu}(\Omega;\R^{N})\\
&u_j\to u\text{ in } L^{1}(\Omega;\R^N) \Big\}
\end{split}
\end{equation}
for $u\in\BV(\Omega;\R^N)$.
Note that the convergence above is in $L^{1}(\Omega;\R^N)$ with respect to the Lebesgue measure $\mathcal L^{n}$, not $\mu$. We will prove an integral representation for the above functional. The representation is
\begin{equation}\label{eq:integral representation}
\int_{\Omega}F\left(\frac{dDu}{d\mu}\right)\,d\mu+\int_{\Omega}F^{\infty}\left(\frac{dD^{s,\mu}u}{d|D^{s,\mu}u|}\right)\,d|D^{s,\mu}u|
\end{equation}
for any $u\in \BV(\Omega;\R^N)$, where $D^{s,\mu}u$ is the singular part of the variation measure $Du$ with respect to $\mu$.

Initially we will work with a more restricted class of integrands, defined as follows.

\begin{definition}\label{def:special quasiconvex integrands}
Define the class $\mathbf{SQ}(\R^{N\times n})$ of \emph{special quasiconvex integrands} as quasiconvex functions $F\colon\R^{N\times n}\to \R$ with linear growth $|F(A)|\le M(1+|A|)$ for some $M\ge 0$, such that for some
parameters $i,r_i>0$, $F(A)=F^{\infty}(A)-i$ for $|A|\ge r_i$, and $F^{\infty}(A)\ge |A|/i$ for all $A\in \R^{N\times n}$.
\end{definition}

Note that the existence of the recession function $F^{\infty}$ in the sense of \eqref{eq:definition of recession function} is part of the definition. (We could equally well require above that $F(A)=F^{\#}(A)-i$ for $|A|\ge r_i$, recall \eqref{eq:definition of generalized recession function}, as this would imply the existence of $F^{\infty}$.)
Clearly $\mathbf{SQ}(\R^{N\times n})\subset \mathbf{E}(\Omega;\R^{N\times n})$ (constant in the $x$-variable).

Given $F\in\mathbf{Q}(\R^{N\times n})$ with linear growth $0\le F(A)\le M(1+|A|)$ for some $M\ge 0$, we can define $G_i(A):=\max\{F(A),\,F^{\#}(A)+|A|/i-i\}$ for each $i\in\N$, and then it is shown in \cite[Lemma 6.3]{KK} that $G_i\in\mathbf{SQ}(\R^{N\times n})$ and that $G_i(A)\searrow F(A)$ and $(G_i)^{\infty}(A)\searrow F^{\infty}(A)$ for every $A\in \R^{N\times n}$. We will use this fact on a number of occasions.

\subsection{Estimate from below}

In order to obtain the integral representation, we first prove the estimate from below. 

\begin{proposition}\label{prop:estimate from below}
Let $\Omega\subset \R^{n}$ be a bounded open set with $\mathcal L^n(\partial\Omega)=0$, let $\mu\in\mathcal M^+(\Omega)$ with $\mathcal L^n\ll \mu$, let $F\in \mathbf{R}(\Omega;\R^{N\times n})\cap \mathbf{Q}(\R^{N\times n})$ with
\[
m|A|\le F(A)\le M(1+|A|)
\]
for some $0<m\le M$,
and let $u\in\BV(\Omega;\R^N)$. Then we have
\[
\mathcal F_{*}(u,\Omega)\ge \int_{\Omega}F\left(\frac{dDu}{d\mu}\right)\,d\mu+\int_{\Omega}F^{\infty}\left(\frac{dD^{s,\mu}u}{d|D^{s,\mu}u|}\right)\,d|D^{s,\mu}u|.
\]
\end{proposition}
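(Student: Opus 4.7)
Since $F\ge 0$, I may assume $\mathcal F_{*}(u,\Omega)<\infty$ and pick a recovery sequence $(u_j)\subset W^{1,1}_\mu(\Omega;\R^N)$ with $u_j\to u$ in $L^1$ and $\int_\Omega F(dDu_j/d\mu)\,d\mu\to \mathcal F_{*}(u,\Omega)$. The coercivity $F(A)\ge m|A|$ gives $|Du_j|(\Omega)\le m^{-1}\int_\Omega F(dDu_j/d\mu)\,d\mu$, so $(u_j)$ is norm-bounded in $\BV(\Omega;\R^N)$; after extraction $u_j\overset{*}{\rightharpoondown} u$, and by Theorem \ref{thm:generation of generalized Young measures for general mu} a further subsequence of $Du_j$ generates a generalized Young measure $\nu=(\nu_x,\lambda_\nu,\nu_x^\infty)\in\mathbf{Y}(\Omega,\mu;\R^{N\times n})$. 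Because each $Du_j$ is absolutely continuous with respect to $\mu$, the singular-part summand on the left of \eqref{eq:generation of generalized Young measures for general mu} vanishes, and Corollary \ref{cor:representation wrt mu} applied to $F\in \mathbf{R}(\Omega;\R^{N\times n})$ (trivial in the $x$-variable) yields the weak* convergence of non-negative measures
\[
F\!\left(\tfrac{dDu_j}{d\mu}\right)\mu\ \overset{*}{\rightharpoondown}\ \langle F,\nu_x\rangle\,\mu+\langle F^\infty,\nu_x^\infty\rangle\,\lambda_\nu
\quad\text{in } \mathcal M(\overline\Omega).
\]
Lower semicontinuity of mass on the open set $\Omega$ then produces
\[
\mathcal F_{*}(u,\Omega)\ \ge\ \int_\Omega\langle F,\nu_x\rangle\,d\mu\ +\ \int_\Omega\langle F^\infty,\nu_x^\infty\rangle\,d\lambda_\nu. \qquad (\star)
\]

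Next I would identify the barycenter of $\nu$. Inserting (components of) $f=\id$ into \eqref{eq:generation of gradient Young measure wrt mu} gives $Du=\langle\id,\nu_x\rangle\,\mu+\langle\id,\nu_x^\infty\rangle\,\lambda_\nu$ on $\Omega$; splitting $\lambda_\nu=\frac{d\lambda_\nu}{d\mu}\mu+\lambda_\nu^{s,\mu}$ then yields
\[
\frac{dDu}{d\mu}(x)=\langle\id,\nu_x\rangle+\langle\id,\nu_x^\infty\rangle\,\frac{d\lambda_\nu}{d\mu}(x),\qquad D^{s,\mu}u=\langle\id,\nu_x^\infty\rangle\,\lambda_\nu^{s,\mu},
\]
so $|D^{s,\mu}u|\ll\lambda_\nu^{s,\mu}$. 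Substituting these identities, $(\star)$ is reduced to two Jensen-type inequalities: for $\mu$-a.e.\ $x$,
\[
\langle F,\nu_x\rangle+\langle F^\infty,\nu_x^\infty\rangle\,\tfrac{d\lambda_\nu}{d\mu}(x)\ \ge\ F\!\left(\tfrac{dDu}{d\mu}(x)\right),
\]
and for $\lambda_\nu^{s,\mu}$-a.e.\ $x$, $\langle F^\infty,\nu_x^\infty\rangle\ge F^\infty(\langle\id,\nu_x^\infty\rangle)$; the latter, combined with positive $1$-homogeneity of $F^\infty$, gives the measure inequality $\langle F^\infty,\nu_x^\infty\rangle\,\lambda_\nu^{s,\mu}\ge F^\infty\!\left(dD^{s,\mu}u/d|D^{s,\mu}u|\right)|D^{s,\mu}u|$, and integration of both inequalities yields the stated lower bound.

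The main obstacle is establishing these Jensen-type inequalities for a generalized gradient Young measure against a general $\mu$, since the Lebesgue-measure version Theorem \ref{thm:Jensens inequalities for generalized Young measures} rests on Lebesgue scaling and $\mu$ is not assumed doubling. My plan is first to reduce to integrands $G_i\in\mathbf{SQ}(\R^{N\times n})$ via the monotone approximation $G_i\searrow F$, $G_i^\infty\searrow F^\infty$ recalled after Definition \ref{def:special quasiconvex integrands}: for $\mathbf{SQ}$ integrands one has $G_i=G_i^\infty-i$ outside a ball, which makes the finite and recession parts into essentially a single object and permits a clean passage to the limit in $(\star)$. Second, I would apply Besicovitch differentiation of $Du$, $\lambda_\nu$ and the limit measure in $(\star)$ with respect to $\mu$ to select good points $x_0$, and perform a rescaling of $(u_j)$ around $x_0$ producing a sequence whose Lebesgue-measure gradient Young measure matches the fibres $\nu_{x_0}$ and $\nu_{x_0}^\infty$; Theorem \ref{thm:Jensens inequalities for generalized Young measures} then supplies the Jensen inequality at $x_0$. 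The delicate point in this translation is exactly the regime where $d\mathcal L^n/d\mu$ degenerates at $x_0$, in which case the finite part of the integrand morphs, through the rescaling, into its recession function; it is here that the assumption of a continuous recession function $F^\infty$ (rather than just a generalized one $F^{\#}$) appears indispensable, in accordance with the remark following Theorem \ref{thm:lower semicontinuity result in intro}.
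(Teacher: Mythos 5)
Your reduction of the proposition to two Jensen-type inequalities for the $\mu$-generated Young measure is sound as far as it goes (the generation step, the vanishing of the singular summand, the identification of the barycenter and the splitting $\lambda_\nu=\frac{d\lambda_\nu}{d\mu}\mu+\lambda_\nu^{s,\mu}$ are all correct), but the proof of those Jensen inequalities is precisely the hard content of the proposition, and your sketch of it does not close. Note that in the paper the $\mu$-Jensen inequalities (Theorem \ref{thm:Jensens inequalities wrt mu}) are \emph{deduced from} this proposition via the integral representation, so they cannot be used as input here. Your plan is a blow-up at $\mu$-generic points $x_0$ intended to convert the fibres $\nu_{x_0},\nu_{x_0}^{\infty}$ into a Lebesgue gradient Young measure to which Theorem \ref{thm:Jensens inequalities for generalized Young measures} applies. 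At points carried by $\mu^s$ this cannot work as stated: rescaling $(u_j)$ by Lebesgue-natural dilations does not reproduce the fibre of the $\mu$-Young measure (the relevant density $d\mathcal L^n/d\mu$ is zero there, $\mu^s$ need not be doubling or have any asymptotic homogeneity), and there is no reason the fibre $\nu_{x_0}$ at such a point should be a gradient Young measure in the Lebesgue sense, which is what quasiconvexity alone feeds on. The missing idea is Alberti's rank-one theorem: the paper shows that for $\mu^s$-a.e.\ $x$ the fibre $\nu_x$ is carried on a fixed rank-one hyperplane $\R^N\otimes\eta(x)$ (because $dDu_j/d\mu^s=\xi_j\otimes\eta$ with $\eta$ independent of $j$), and on such a hyperplane the rank-one convexity of $F$ gives genuine convexity, so the \emph{classical} Jensen inequality applies to $\nu_x$ and $\nu_x^{\infty}$. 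Without this localization your inequality $\langle F,\nu_x\rangle+\langle F^{\infty},\nu_x^{\infty}\rangle\frac{d\lambda_\nu}{d\mu}\ge F(\frac{dDu}{d\mu})$ at $\mu^s$-points is unsupported.

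Structurally, the paper also splits the argument differently from you: it decomposes $\mu=a\,\mathcal L^n+\mu^s$ at the level of the functional and treats the two parts by different devices. On the absolutely continuous part it uses the Young measure generated with respect to $\mathcal L^n$ (not $\mu$), together with the truncation $a\wedge m$ of the density --- needed because the integrand $(x,A)\mapsto F(A/a(x))a(x)$ has a discontinuous recession function without it --- and then applies the Lebesgue-measure Jensen inequalities of Theorem \ref{thm:Jensens inequalities for generalized Young measures}. On the singular part it uses the $\mu$-Young measure plus Alberti's theorem as above, and the two estimates are glued by a Vitali covering of the carrier of $\mu^s$. The $\mathbf{SQ}$ approximation $G_i\searrow F$ that you invoke is indeed used, but to make $F$ and $F^{\infty}$ coincide up to a constant outside a ball so that the error terms $(Mr_i+i)\mu^s(U)$ can be controlled, not to prove the $\mu$-Jensen inequalities directly. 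As written, your proposal identifies the right target but leaves the essential step unproved.
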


Write the Radon-Nikodym decomposition of $\mu$ as $\mu=a\,\mathcal L^{n}+\mu^{s}$, with $a\in L^1(\Omega)$. We prove the theorem by considering separately the sets where the absolutely continuous part and the singular part of $\mu$ are carried.

\subsubsection{The absolutely continuous part}

The following lemma gives, in essence, the estimate from below for the set where the absolutely continuous part of $\mu$ is carried. At this point, we make the extra assumption that $F$ is a special quasiconvex integrand.

\begin{lemma}\label{lem:estimate from below}
Let $\Omega\subset \R^n$ be a bounded open set with $\mathcal L^n(\partial\Omega)=0$, let $\mu\in\mathcal M^+(\Omega)$ with $\mathcal L^n\ll \mu$, and let $F\in \mathbf{SQ}(\R^{N\times n})$ with parameters $i,r_i>0$, and with linear growth $0\le F(A)\le M(1+|A|)$.
Then for any open $U\subset\Omega$ and any sequence $(u_j)\subset W^{1,1}_{\mu}(\Omega;\R^{N})$ with $u_j\to u$ in $L^1(\Omega;\R^N)$ and
\begin{equation}\label{eq:finiteness of sequence in lower semicontinuity}
\liminf_{j\to\infty}\int_{\Omega}F\left(\frac{dDu_j}{d\mu}\right)\,d\mu<\infty,
\end{equation}
we have
\begin{equation}\label{eq:estimate from below where mua lives}
\begin{split}
&\liminf_{j\to\infty} \int_{U} F\left(\frac{dDu_j}{d\mu}\right)\,d\mu\\
&\qquad\ge \int_{U} F\left(\frac{\nabla u}{a}\right) a\,d\mathcal L^{n}+\int_{U}F^{\infty}\left(\frac{dD^s u}{d|D^s u|}\right)\,d|D^{s}u|-(Mr_i+i)\mu^s(U).
\end{split}
\end{equation}
\end{lemma}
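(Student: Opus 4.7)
The $\mathbf{SQ}$ coercivity $F^\infty(A)\ge |A|/i$, together with the universally valid bound $F(A)\ge F^\infty(A)-(Mr_i+i)$ (trivial for $|A|\ge r_i$ and, for $|A|<r_i$, obtained from $F\ge 0\ge F^\infty(A)-Mr_i$ using $|F^\infty(A)|\le M|A|$), forces $|Du_j|(\Omega)$ to be uniformly bounded, so $(u_j)$ is norm-bounded in $\BV$ and $u_j\overset{*}{\rightharpoondown}u$. Pass to a subsequence along which $Du_j$ generates, via Theorem~\ref{thm:generation of generalized Young measures for general mu} applied with $\mathcal L^n$ in place of $\mu$, a gradient Young measure $\nu=(\nu_x,\lambda_\nu,\nu_x^\infty)$ whose barycenter on $\Omega$ equals $Du$. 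Decompose $\mu=a\mathcal L^n+\mu^s$; the hypothesis $\mathcal L^n\ll\mu$ forces $a>0$ $\mathcal L^n$-a.e., and from $Du_j\ll\mu$ one reads $\nabla u_j=a\,(dDu_j/d\mu)$ $\mathcal L^n$-a.e.\ and $D^s u_j=(dDu_j/d\mu)\mu^s$. Combining the universal lower bound on $F$ with the positive $1$-homogeneity of $F^\infty$ yields
\begin{equation*}
\int_U F\!\left(\tfrac{dDu_j}{d\mu}\right)d\mu \;\ge\; \int_U g(x,\nabla u_j)\,d\mathcal L^n + \int_U F^\infty\!\!\left(\tfrac{dD^s u_j}{d|D^s u_j|}\right)d|D^s u_j| \;-\;(Mr_i+i)\mu^s(U),
\end{equation*}
where $g(x,A):=a(x)F(A/a(x))$ is quasiconvex in $A$ with $g^\infty(x,A)=F^\infty(A)$.

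\textbf{Weak$^*$-convergence of the split functional.}
It remains to prove $\liminf_j\bigl[\int_U g(x,\nabla u_j)\,d\mathcal L^n+\int_U F^\infty\,d|D^s u_j|\bigr]\ge \int_U g(x,\nabla u)\,d\mathcal L^n+\int_U F^\infty\,d|D^s u|$. The obstruction is that $g\notin\mathbf R(\Omega;\R^{N\times n})$, its linear-growth constant $a$ being only $L^1$, not $L^\infty$. Decompose $g=F^\infty+\tilde g$; the $\mathbf{SQ}$-structure gives $|\tilde g(x,A)|\le C_i\,a(x)$ uniformly in $A$ (with $C_i$ depending only on $M,i,r_i$) together with $\tilde g^\infty\equiv 0$. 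Theorem~\ref{thm:generation of generalized Young measures for general mu} with $f=F^\infty\in\mathbf E$ yields
\begin{equation*}
F^\infty(\nabla u_j)\,\mathcal L^n+F^\infty\!\!\left(\tfrac{dD^s u_j}{d|D^s u_j|}\right)|D^s u_j| \overset{*}{\rightharpoondown} \langle F^\infty,\nu_x\rangle\mathcal L^n+\langle F^\infty,\nu_x^\infty\rangle\lambda_\nu.
\end{equation*}
For $\tilde g$, introduce the cutoffs $\tilde g_k:=\tilde g\cdot\mathbbm{1}_{\{a\le k\}}\in\mathbf R$ (uniformly bounded by $C_ik$, vanishing recession); Corollary~\ref{cor:representation wrt mu} yields $\tilde g_k(x,\nabla u_j)\,\mathcal L^n\overset{*}{\rightharpoondown}\langle\tilde g_k,\nu_x\rangle\,\mathcal L^n$ for each $k$, and the passage $k\to\infty$ is controlled uniformly in $j$ by $C_i\int_{\{a>k\}}\!a\,d\mathcal L^n\to 0$ (dominated convergence on $a\in L^1$). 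Summing, the nonnegative combined measure $g(x,\nabla u_j)\mathcal L^n+F^\infty(\cdots)|D^s u_j|$ converges weak$^*$ on $\overline\Omega$ to $\langle g,\nu_x\rangle\mathcal L^n+\langle F^\infty,\nu_x^\infty\rangle\lambda_\nu$, and the Portmanteau theorem on the open set $U$ delivers the desired $\liminf$ inequality.

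\textbf{Jensen and conclusion.}
The singular part of Theorem~\ref{thm:Jensens inequalities for generalized Young measures} applied to the constant-in-$x$ quasiconvex $F$ (where $F^\#=F^\infty$) gives $F^\infty(dD^s u/d|D^s u|)|D^s u|\le\langle F^\infty,\nu_x^\infty\rangle\lambda_\nu^s$ as measures. For the absolutely continuous part, apply the first inequality in Theorem~\ref{thm:Jensens inequalities for generalized Young measures} to the countable family $\{F_m(A):=s_m F(A/s_m)\}_m$ indexed by a dense $\{s_m\}\subset(0,\infty)$, and let $E$ be the intersection of the resulting full-measure sets (noting that $F_m^\#=F^\infty$ independent of $m$). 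At $x_0\in E$ with $a(x_0)\in(0,\infty)$, pick $s_{m_k}\to a(x_0)$ and pass to the limit in $F_{m_k}(\nabla u(x_0))\le\langle F_{m_k},\nu_{x_0}\rangle+\langle F^\infty,\nu_{x_0}^\infty\rangle\tfrac{d\lambda_\nu}{d\mathcal L^n}(x_0)$, using continuity of $s\mapsto sF(A/s)$ on $(0,\infty)$ and dominated convergence against $\nu_{x_0}$ (justified by $\langle|\cdot|,\nu_{x_0}\rangle<\infty$ a.e.), to deduce $g(x_0,\nabla u(x_0))\le\langle g(x_0,\cdot),\nu_{x_0}\rangle+\langle F^\infty,\nu_{x_0}^\infty\rangle\tfrac{d\lambda_\nu}{d\mathcal L^n}(x_0)$. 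Integrating the two Jensen inequalities over $U$ and combining with the weak$^*$ bound of the previous paragraph closes the argument. The chief technical obstacle is the non-uniform linear growth of $g$ arising from $a\in L^1\setminus L^\infty$; the $\mathbf{SQ}$-driven split $g=F^\infty+\tilde g$, with $\tilde g$ $A$-bounded by an $L^1$ function and of vanishing recession, is exactly what lets the Young-measure machinery (built for integrands in $\mathbf E$ or $\mathbf R$) carry through via $L^1$-dominated convergence on the cutoffs.
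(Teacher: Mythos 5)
Your proof is correct, and it follows the paper's overall architecture: generate a generalized Young measure with respect to $\mathcal L^n$, split $\mu=a\,\mathcal L^n+\mu^s$, absorb the $\mu^s$-part into the error $-(Mr_i+i)\mu^s(U)$ via the $\mathbf{SQ}$-structure, and treat the rescaled integrand $g(x,A)=a(x)F(A/a(x))$ by Young-measure representation plus Jensen. The two delicate steps are, however, implemented genuinely differently. For the representation, the paper truncates the weight from above, replacing $g$ by $F(A/(a\wedge m))(a\wedge m)$, which stays quasiconvex in $A$ and has jointly continuous recession function $F^{\infty}$, then controls an error term $\eps_m$ and passes $m\to\infty$ by monotone convergence at the very end; you instead split $g=F^{\infty}+\tilde g$ with $|\tilde g(x,A)|\le C_i a(x)$ and cut off on the super-level sets of $a$, which reduces the $k\to\infty$ passage to dominated convergence on $a\in L^1$, uniformly in $j$ --- arguably cleaner --- and your appeal to Portmanteau for the nonnegative limit measures on the open set $U$ neatly replaces the paper's inner approximation by $U_{\kappa}$ with $\lambda_{\nu}(\partial U_{\kappa})=0$. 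For Jensen, the paper applies Theorem \ref{thm:Jensens inequalities for generalized Young measures} to the $x$-dependent truncated integrand, which tacitly requires the exceptional null set to be chosen uniformly over the quasiconvex functions involved; your dense-family argument with the scalings $s_mF(\cdot/s_m)$ (all sharing the recession function $F^{\infty}$) makes this explicit and is an improvement in rigor. Two cosmetic caveats: the bound $|\tilde g(x,A)|\le C_ia(x)$ should also be verified for $|A|<a(x)r_i$ (it holds, with $C_i$ of order $M(1+r_i)+i$), and the assertion $\tilde g^{\infty}\equiv 0$ is only valid pointwise in $x$ --- the joint recession limit of the uncut $\tilde g$ need not exist, which is precisely why the cutoffs $\tilde g_k$ are needed --- but you only invoke it for $\tilde g_k$, where it is correct.
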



\begin{proof}
Since $\mathcal L^n\ll \mu$, we can assume that $a>0$ everywhere in $\Omega$.
Pick a subsequence of $(u_j)$ (not relabeled) that gives the limit in \eqref{eq:estimate from below where mua lives}.
Since $F(A)=F^{\infty}(A)-i\ge |A|/i-i$ for all $|A|\ge r_i$, we have by \eqref{eq:finiteness of sequence in lower semicontinuity} that $(u_j)$ is a norm-bounded sequence in $\BV(\Omega;\R^{N})$.
Thus $u_j\overset{*}{\rightharpoondown} u$ in $\BV(\Omega;\R^{N})$.
By Theorem \ref{thm:generation of generalized Young measures for general mu}, the derivatives $Du_j$ generate a generalized Young measure $(\nu_x,\lambda_{\nu},\nu_x^{\infty})$ with respect to the Lebesgue measure, with $\lambda_{\nu}\in \mathcal M^{+}(\overline{\Omega})$ and
\[
(\nu_x)_x\in L^{\infty}_{w^*}(\Omega;\mathcal M^1(\R^{N\times n})),\qquad(\nu_x^{\infty})_x\in L^{\infty}_{w^*}(\overline{\Omega},\lambda_{\nu};\mathcal M^1(\partial \mathbb{B}^{N\times n})).
\]
This means that for every representation integrand $f\in \mathbf{R}(\Omega;\R^{N\times n})$ and every integrand satisfying the conditions of the latter part of Corollary \ref{cor:representation wrt mu}, we have
\begin{equation}\label{eq:generation of gradient Young measure in U}
\begin{split}
&f (x,\nabla u_j(x))\,\mathcal L^{n}\mres \Omega+f^{\infty}\left(x,\frac{dD^{s}u_j}{d|D^{s}u_j|}\right)\,|D^{s}u_j| \\
&\qquad\qquad\overset{*}{\rightharpoondown} \langle f(x,\cdot),\nu_x\rangle\,\mathcal L^{n}\mres \Omega+\langle f^{\infty}(x,\cdot),\nu_x^{\infty}\rangle \,\lambda_{\nu}\qquad\textrm{in }\mathcal M(\overline{\Omega}).
\end{split}
\end{equation}
First assume that $\mathcal L^n(\partial U)=\lambda_{\nu}(\partial U)=0$. Let us start computing
\begin{equation}\label{eq:division into absolutely continuous and singular part}
\begin{split}
\int_{U} F\left(\frac{dDu_j}{d\mu}\right)d\mu &= \int_{U} F\left(\frac{dDu_j}{d (a\mathcal L^{n})}\right)a\, d\mathcal L^{n} + \int_{U} F\left(\frac{dDu_j}{d\mu^{s}}\right)d\mu^{s}\\
&=\int_{U} F\left( \frac{\nabla u_j}{a}\right)a\,d\mathcal L^{n}+\int_{U}F\left(\frac{dD^s u_j}{d\mu^{s}}\right)d\mu^{s}\\
&=:I_j+II_j.
\end{split}
\end{equation}
We wish to analyze the term $I_j$ by using the fact that $Du_j$ generates a generalized Young measure. However, the function
\[
(x,A)\mapsto F\left(\frac{A}{a(x)}\right)a(x)\mathbbm{1}_{U}(x)
\]
does not necessarily satisfy the conditions of the latter part of Corollary \ref{cor:representation wrt mu}: while it is a Cara\-th\'eo\-dory function, its recession function need not be continuous as required. To overcome this problem, we define the super-level sets of $a$:
\[
E_m:=\{x\in \Omega:\, a(x)> m\},\qquad  m\in \N.
\]
Recall that $a(x)>0$ for every $x\in\Omega$. Denoting the minimum of $a$ and $m$ by $a\wedge m$, by the fact that $F(A)=F^{\infty}(A)-i$ for all $|A|\ge r_i$ we have for any $x\in U$ and $A\in\partial\mathbb{B}^{N\times n}$
\begin{align*}
&\limsup_{\substack{x'\to x \\ A'\to A \\ t\to\infty}} \frac{F\left(\frac{tA'}{a(x')\wedge m}
\right)}{t}
 a(x')\wedge m
= \limsup_{\substack{A'\to A\\ t\to\infty}}
\frac{F(tA')}{t}\\
&\qquad=\limsup_{\substack{A'\to A\\ t\to\infty}}\frac{F^{\infty}(tA')-i}{t}
=\limsup_{\substack{A'\to A\\ t\to\infty}}\frac{tF^{\infty}(A')}{t}= 
\limsup_{A'\to A}F^{\infty}(A')=F^{\infty}(A)
\end{align*}
by the (Lipschitz) continuity of $F^{\infty}$.
Note that the first equality is not necessarily true unless we take the minimum of $a$ with $m$.
Also, we now see that all of the limit superiors above are in fact limits. We conclude that
\begin{equation}\label{eq:modified representation integrand}
(x,A)\mapsto F\left(\frac{A}{(a\wedge m)(x)}\right)(a\wedge m)(x)\mathbbm{1}_{U}(x)
\end{equation}
satisfies the conditions of the latter part of Corollary \ref{cor:representation wrt mu}. Fix $m\in\N$. By the fact that $F(A)=F^{\infty}(A)-i$ for all $|A|\ge r_i$, we can write
\begin{align*}
& I_j - \int_{U} F\left(\frac{\nabla u_j}{a\wedge m}\right)a\wedge m\,d\mathcal L^{n}=\int_{U\cap E_m}F\left(\frac{\nabla u_j}{a}\right)a-F\left(\frac{\nabla u_j}{m}\right)m\,d\mathcal L^{n}\\
  &\qquad= \int_{U\cap E_m\cap \{|\nabla u_j|<ar_i\}} F\left(\frac{\nabla u_j}{a}\right)a-F\left(\frac{\nabla u_j}{m}\right)m\,d\mathcal L^{n}\\
 &\qquad\qquad+\int_{U\cap E_m\cap \{|\nabla u_j|\ge ar_i\}} F^{\infty}\left(\frac{\nabla u_j}{a}\right)a-F^{\infty}\left(\frac{\nabla u_j}{m}\right)m-i(a-m)\,d\mathcal L^{n}\\
   &\qquad=\int_{U\cap E_m\cap \{|\nabla u_j|<ar_i\}} F\left(\frac{\nabla u_j}{a}\right)a-F\left(\frac{\nabla u_j}{m}\right)m\,d\mathcal L^{n}\\
   &\qquad\qquad -\int_{U\cap E_m\cap \{|\nabla u_j|\ge ar_i\}}i(a-m)\,d\mathcal L^{n}\\
   &\qquad :=\eps_m.
\end{align*}
We have by the linear growth of $F$
\begin{align*}
&\bigg| \int_{U\cap E_m\cap \{|\nabla u_j|<ar_i\}} F \left(\frac{\nabla u_j}{a}\right)a- F\left(\frac{\nabla u_j}{m}\right)m\,d\mathcal L^{n}\bigg|\\
&\qquad\qquad\le \int_{U\cap E_m\cap \{|\nabla u_j|<ar_i\}} F \left(\frac{\nabla u_j}{a}\right)a+ F\left(\frac{\nabla u_j}{m}\right)m\,d\mathcal L^{n} \\
&\qquad\qquad\le \int_{U\cap E_m} M(1+r_i)a+M(1+ar_i/m)m\,d\mu\\
&\qquad\qquad\le \int_{U\cap E_m} 2Ma(1+r_i)\,d\mu.
\end{align*}
Clearly this last quantity converges to zero as $m\to\infty$, as does the second term of $\eps_m$, so in total $\eps_m\to 0$ as $m\to \infty$.

By the fact that the derivatives $Du_j$ generate a generalized Young measure (recall \eqref{eq:generation of gradient Young measure in U}) and the fact that
the integrand
\eqref{eq:modified representation integrand} satisfies the conditions of the latter part of Corollary \ref{cor:representation wrt mu} and has recession function $F^{\infty}$ in $U$, we have
\begin{equation}\label{eq:estimate of absolutely continuous term}
\begin{split}
I_j & +\int_{U} F^{\infty}\left(\frac{dD^{s}u_j}{d|D^{s}u_j|}\right)\,d|D^{s}u_j|-\eps_m \\
&=\int_{U} F\left(\frac{\nabla u_j}{a\wedge m}\right)a\wedge m\,d\mathcal L^{n}+\int_{U}F^{\infty}\left(\frac{dD^{s}u_j}{d|D^{s}u_j|}\right)\,d|D^{s}u_j|\\
& \to \int_{U} \int_{\R^{N\times n}}F\left(\frac{A}{a\wedge m}\right) a\wedge m\,d\nu_x(A)\,d\mathcal L^{n}+\int_{U}\int_{\partial \mathbb B^{N\times n}} F^{\infty}(A)\, d\nu_x^{\infty}(A)\,d\lambda_{\nu}
\end{split}
\end{equation}
as $j\to\infty$.
Recalling \eqref{eq:division into absolutely continuous and singular part}, let us then consider the term $II_j$.
Since $F(A)=F^{\infty}(A)-i$ for all $|A|\ge r_i$, we estimate
\begin{equation}\label{eq:estimate of singular term}
\begin{split}
&II_j- \int_{U}  F^{\infty}\left(\frac{dD^{s}u_j}{d|D^{s}u_j|}\right)\,d|D^{s}u_j|\\
&\ \   = \int_{U}F\left(\frac{dD^s u_j}{d\mu^{s}}\right)\,d\mu^{s}- \int_{U} F^{\infty}\left(\frac{dD^{s}u_j}{d\mu^s}\right)\,d\mu^s\\
&\ \  = \int_{U\cap \{|dD^s u_j/d\mu^{s}|<r_i\}}  F\left(\frac{dD^{s}u_j}{d\mu^s}\right)  \,d\mu^s-i\mu^s(U\cap \{|dD^s u_j/d\mu^{s}|\ge r_i\})\\
&\ \ \qquad-\int_{U\cap \{|dD^s u_j/d\mu^{s}|<r_i\}} F^{\infty}\left(\frac{dD^{s}u_j}{d\mu^s}\right)\,d\mu^s\\
&\ \   \ge -i\mu^s(U\cap \{|dD^s u_j/d\mu^{s}|\ge r_i\})-\int_{U\cap \{|dD^s u_j/d\mu^{s}|<r_i\}} F^{\infty}\left(\frac{dD^{s}u_j}{d\mu^s}\right)\,d\mu^s\\
&\ \   \ge -i\mu^s(U)-\int_{U\cap \{|dD^s u_j/d\mu^{s}|<r_i\}} M\left|\frac{dD^{s}u_j}{d\mu^s}\right|\,d\mu^s\\
&\ \  \ge-i\mu^s(U)-Mr_i\mu^s(U).
\end{split}
\end{equation}
Combining \eqref{eq:division into absolutely continuous and singular part}, \eqref{eq:estimate of absolutely continuous term}, and \eqref{eq:estimate of singular term}, we get by Jensen's inequalities for generalized Young measures given in Theorem \ref{thm:Jensens inequalities for generalized Young measures},
\begin{align*}
&\liminf_{j\to\infty}  \int_{U} F\left(\frac{dDu_j}{d\mu}\right)\,d\mu =\liminf_{j\to\infty}(I_j+II_j) \\
 &\quad\ge \int_{U} \int_{\R^{N\times n}}F\left(\frac{A}{a\wedge m}\right) a\wedge m\,d\nu_x(A)\,d\mathcal L^{n}+\int_{U}\int_{\partial \mathbb B^{N\times n}} F^{\infty}(A)\, d\nu_x^{\infty}(A)\,d\lambda_{\nu}\\
 &\qquad\quad+\eps_m-(Mr_i+i)\mu^s(U)\\
&\quad\ge \int_{U} F\left(\frac{\nabla u}{a\wedge m}\right) a\wedge m\,d\mathcal L^{n}+\int_{U}F^{\infty}\left(\frac{dD^s u}{d|D^s u|}\right)\,d|D^{s}u|\\
&\qquad\quad+\eps_m-(Mr_i+i)\mu^s(U)\\
&\quad\ge \int_{U} F\left(\frac{\nabla u}{a}\right) a\wedge m\,d\mathcal L^{n}+\int_{U}F^{\infty}\left(\frac{dD^s u}{d|D^s u|}\right)\,d|D^{s}u|\\
&\qquad\quad+\eps_m-(Mr_i+i)\mu^s(U)\\
&\quad\to \int_{U} F\left(\frac{\nabla u}{a}\right) a\,d\mathcal L^{n}+\int_{U}F^{\infty}\left(\frac{dD^s u}{d|D^s u|}\right)\,d|D^{s}u|-(Mr_i+i)\mu^s(U)
\end{align*}
as $m\to\infty$, by the monotone convergence theorem.
Finally, if $U$ does not satisfy $\lambda_{\nu}(\partial U)=0$ or $\mathcal L^n(\partial U)=0$,
we define
\[
U_{\kappa}:=\{x\in U:\,\dist(x,U^{c})>\kappa\},\qquad \kappa>0,
\]
and then $\lambda_{\nu}(\partial U_{\kappa})=0$ and $\mathcal L^n(\partial U_{\kappa})=0$ for all but at most countably many $\kappa>0$ by the fact that these are finite measures on $U$.
For such values of $\kappa$ we write
\begin{align*}
&\liminf_{j\to\infty} \int_{U} F\left(\frac{dDu_j}{d\mu}\right)\,d\mu\\
&\quad \ge \liminf_{j\to\infty}\int_{U_{\kappa}} F\left(\frac{dDu_j}{d\mu}\right)\,d\mu\\
&\quad \ge \int_{U_{\kappa}} F\left(\frac{\nabla u}{a}\right) a\,d\mathcal L^{n}+\int_{U_{\kappa}}F^{\infty}\left(\frac{dD^s u}{d|D^s u|}\right)\,d|D^{s}u|-(Mr_i+i)\mu^s(U_{\kappa})\\
&\qquad \to \int_{U} F\left(\frac{\nabla u}{a}\right) a\,d\mathcal L^{n}+\int_{U}F^{\infty}\left(\frac{dD^s u}{d|D^s u|}\right)\,d|D^{s}u|-(Mr_i+i)\mu^s(U)
\end{align*}
as $\kappa\to 0$, by the monotone convergence theorem.
\end{proof}

\subsubsection{The singular part}

Let us then consider the set where $\mu^{s}$ is carried. We prove the following lemma.
\begin{lemma}\label{lem:estimate from below for singular part of mu}
Let $\Omega\subset \R^n$ be a bounded open set, let $\mu\in\mathcal M^+(\Omega)$, and let $F\in \mathbf{SQ}(\R^{N\times n})$ with $F\ge 0$.
Then for any sequence $(u_j)\subset W^{1,1}_{\mu}(\Omega;\R^{N})$ with $u_j\to u$ in $L^1(\Omega;\R^N)$ and
\[
\liminf_{j\to\infty}\int_{\Omega}F\left(\frac{dDu_j}{d\mu}\right)\,d\mu<\infty,
\]
we have for any ball $B(y,r)\subset\Omega$
\begin{equation}\label{eq:lower semicontinuity for singular part of mu}
\int_{B(y,r)}F\left(\frac{dD^{s}u}{d\mu^{s}}\right)\,d\mu^{s}\le \liminf_{j\to\infty}\int_{B(y,r)}F\left(\frac{dDu_j}{d\mu}\right)\,d\mu.
\end{equation}
\end{lemma}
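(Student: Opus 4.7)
My strategy is a blow-up at $\mu^s$-typical points of $B:=B(y,r)$.

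First, as in Lemma \ref{lem:estimate from below}, the $\mathbf{SQ}$ lower bound $F(A)\ge |A|/i-i$ on $\{|A|\ge r_i\}$ together with finiteness of the liminf gives $\sup_j|Du_j|(\Omega)<\infty$, so along a subsequence $u_j\overset{*}{\rightharpoondown}u$ in $\BV$ and $Du_j\overset{*}{\rightharpoondown}Du$. Passing to a further subsequence, the bounded positive Radon measures $\theta_j:=F(dDu_j/d\mu)\,\mu$ admit a weak* limit $\theta\in\M^+(\overline{B})$. Weak* lower semicontinuity on the open ball $B$ gives $\theta(B)\le\liminf_j\int_B F(dDu_j/d\mu)\,d\mu$, so the lemma reduces to showing $\theta\mres B\ge F(dD^s u/d\mu^s)\,\mu^s\mres B$ as positive measures on $B$. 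By the Besicovitch differentiation theorem applied to $\theta$ with reference measure $\mu^s$, this reduces further to the pointwise bound
\[
\liminf_{\rho\to 0}\frac{\theta(B(y_0,\rho))}{\mu^s(B(y_0,\rho))}\ \ge\ F(A_0),\qquad A_0:=\tfrac{dD^s u}{d\mu^s}(y_0),
\]
at $\mu^s$-a.e.\ $y_0\in B$.

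The heart of the proof is this local bound. At $\mu^s$-a.e.\ $y_0$, Lebesgue differentiation for $\mu^s$ combined with $\mathcal{L}^n\perp\mu^s$ yields $Du(B(y_0,\rho))/\mu^s(B(y_0,\rho))\to A_0$ and $a\mathcal{L}^n(B(y_0,\rho))/\mu^s(B(y_0,\rho))\to 0$ as $\rho\to 0$. For radii $\rho$ with $|Du|(\partial B(y_0,\rho))=0$, weak* convergence also gives $Du_j(B(y_0,\rho))\to Du(B(y_0,\rho))$ as $j\to\infty$. Rescaling $u_j$ on $B(y_0,\rho)$ into a function on $B(0,1)$ via the BV-natural scale $C_\rho:=\mu^s(B(y_0,\rho))/\rho^{n-1}$ produces a sequence whose averaged gradient on $B(0,1)$ approaches $A_0$; applying the quasiconvex Jensen inequality of Theorem \ref{thm:Jensens inequalities for generalized Young measures} to the gradient Young measure (with respect to $\mathcal{L}^n$ on $B(0,1)$) generated by this rescaled sequence delivers, after a diagonal extraction in $(j,\rho)$, the required bound.

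The main obstacle is this blow-up step. Because $\mu^s\perp\mathcal{L}^n$, the scale $\mu^s(B(y_0,\rho))$ behaves essentially arbitrarily relative to the Lebesgue scale $\rho^n$; matching these scales so that the rescaled sequence has a well-controlled gradient Young measure, and so that the Lebesgue-based Jensen inequality from Theorem \ref{thm:Jensens inequalities for generalized Young measures} delivers $F(A_0)$ with errors vanishing uniformly in $j$ and $\rho$, is the delicate technical point.
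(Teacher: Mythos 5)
Your reduction via weak* compactness of the measures $\theta_j := F(dDu_j/d\mu)\,\mu$ and Besicovitch differentiation of the limit $\theta$ with respect to $\mu^s$ is fine, and the initial compactness step matches the paper. The gap is exactly where you locate it, and it is not merely technical: the blow-up step cannot deliver $F(A_0)$. The quantity you must bound from below is $\theta(B(y_0,\rho))/\mu^s(B(y_0,\rho))$, where $\theta_j$ is built from $F$ evaluated at the density of $Du_j$ with respect to $\mu$ and integrated against $\mu$. There is no change of variables converting $\int_{B(y_0,\rho)}F(dDu_j/d\mu)\,d\mu$ into $\int_{B(0,1)}F(\nabla v_j^{\rho})\,d\mathcal L^n$ for the rescaled functions, because $F$ is not positively $1$-homogeneous and the reference measures ($\mu$ versus $\mathcal L^n$) do not transform into one another under the dilation. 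Only the $1$-homogeneous part of $F$ survives a blow-up; moreover, Theorem \ref{thm:Jensens inequalities for generalized Young measures} applied to the rescaled sequence would see $A_0$ only through the singular (with respect to $\mathcal L^n$) part of the blow-up limit, hence only through $F^{\#}=F^{\infty}$. So the best such an argument can produce is a lower bound by $F^{\infty}(A_0)$. Since for $F\in\mathbf{SQ}(\R^{N\times n})$ one need not have $F\le F^{\infty}$ near the origin (e.g.\ $F(0)>0=F^{\infty}(0)$ is allowed), a bound by $F^{\infty}(A_0)$ does not imply the required bound by $F(A_0)$ when $|A_0|<r_i$.

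The paper's proof avoids blow-up entirely. It generates the Young measure $(\nu_x,\lambda_{\nu},\nu_x^{\infty})$ of $(Du_j)$ with respect to $\mu$ itself, so that the limit of $\int F(dDu_j/d\mu)\,d\mu$ is exactly $\int\langle F,\nu_x\rangle\,d\mu+\int\langle F^{\infty},\nu_x^{\infty}\rangle\,d\lambda_{\nu}$. Then Alberti's rank one theorem gives $dDu_j/d\mu^{s}=\xi_j\otimes\eta$ with a direction $\eta(x)$ independent of $j$, and a Lebesgue-point argument with the test integrand $\min\{1,\dist(A,(\R^{N}\otimes\eta(x_0))\cup B(0,R)^c)\}$ shows that $\nu_x$ is carried on the hyperplane $\R^{N}\otimes\eta(x)$ for $\mu^{s}$-a.e.\ $x$. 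On that hyperplane $F$ is convex (by rank one convexity), so the classical convex Jensen inequality --- not the quasiconvex one of Theorem \ref{thm:Jensens inequalities for generalized Young measures} --- applies directly to $\nu_x$ and $\nu_x^{\infty}$, together with $g(A_1+A_2)\le g(A_1)+g^{\infty}(A_2)$ and $F^{\infty}=(F^{\infty})^{c}$ on rank one matrices, and yields the pointwise bound $F(dD^{s}u/d\mu^{s})\le\langle F,\nu_x\rangle+\langle F^{\infty},\nu_x^{\infty}\rangle\,d\lambda_{\nu}^{s}/d\mu^{s}$ that you were after. To salvage your plan you would need to replace the blow-up by an argument that keeps $\mu$ as the reference measure and exploits this rank-one structure.
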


\begin{proof}
Note again that it is enough to prove the result for a subsequence.
Let $i,r_i>0$ be the parameters of $F$, see Definition \ref{def:special quasiconvex integrands}.
Since $F(A)=F^{\infty}(A)-i\ge |A|/i-i$ for all $|A|\ge r_i$, the sequence $\frac{dDu_j}{d\mu}$ is norm-bounded in $L^1(\Omega,\mu;\R^{N\times n})$,
implying that $(u_j)$ is a norm-bounded sequence in $\BV(\Omega;\R^N)$.
By Theorem \ref{thm:generation of generalized Young measures for general mu} we know that with respect to $\mu$, a subsequence of $Du_j$ (not relabeled) generates a generalized Young measure $(\nu_x,\lambda_{\nu},\nu_x^{\infty})$, with $\lambda_{\nu}\in \mathcal M^{+}(\overline{\Omega})$ and
\[
(\nu_x)_x\in L^{\infty}_{w^*}(\Omega,\mu;\mathcal M^1(\R^{N\times n})),\qquad (\nu_x^{\infty})_x\in L^{\infty}_{w^*}(\overline{\Omega},\mu;\mathcal M^1(\partial\mathbb{B}^{N\times n})).
\]
This means in particular that for every integrand $f\in \mathbf{E}(\Omega;\R^{N\times n})$,
\begin{equation}\label{eq:Young measure wrt to mu}
\begin{split}
f\left(\frac{dDu_j}{d\mu}\right)&\mu \overset{*}{\rightharpoondown} \langle f,\nu_x\rangle \mu+\langle f^{\infty},\nu_x^{\infty}\rangle\lambda_{\nu}\\
&= \left(\langle f,\nu_x\rangle a+\langle f^{\infty},\nu_x^{\infty}\rangle \frac{d\lambda_{\nu}}{d\mathcal L^n}\right)\mathcal L^n+\langle f,\nu_x\rangle \mu^{s}+\langle f^{\infty},\nu_x^{\infty}\rangle\lambda_{\nu}^{s}
\end{split}
\end{equation}
in $\mathcal M(\Omega)$.
By Alberti's rank one theorem, see \cite{Alb}, we have $\mu^{s}$-almost everywhere that
\begin{equation}\label{eq:rectifiability of singular part of mu}
\frac{dDu_j}{d\mu^{s}}= \xi_j\otimes \eta,\quad \qquad \frac{dDu}{d\mu^{s}}= \xi\otimes \eta,
\end{equation}
where $\xi_j,\xi\in \R^{N}$ and $\eta\in \partial \mathbb B^{n}$. Note that $\eta$ does not depend on $j$.
We show that for $\mu^{s}$-almost every $x\in \Omega$, the measure $\nu_x$ is carried on the hyperplane $\R^{N}\otimes\eta(x)$. For this, fix $\eps>0$ and fix a point $x_0\in \Omega$. Excluding a $\mu^s$-negligible set, we can assume by the Besicovitch differentiation theorem (see e.g. \cite[Theorem 2.22]{AFP}) that for some radius $r>0$, we have $B(x_0,r)\subset \Omega$ and
\begin{equation}\label{eq:conditions for point in singular set of mu}
\vint{B(x_0,r)}|\eta-\eta(x_0)|\,d\mu^{s}<\eps\quad\ \textrm{and}\quad\ \int_{B(x_0,r)}a\,d\mathcal L^n<\eps\mu(B(x_0,r)).
\end{equation}
Fix $R\ge 1$ and define
\[
f(A):=\min\{1,\dist (A,(\R^{N}\otimes \eta(x_0))\cup B(0,R)^c)\};
\]
note that there is no $x$-dependence, and $f\in \mathbf{E}(\Omega;\R^{N\times n})$.
Since $f(\xi\otimes\eta)=0$ for $|\xi|\ge R$ and $|\eta|=1$ and since $f$ is $1$-Lipschitz,
\begin{equation}\label{eq:consequence of Lebesgue point for eta}
\begin{split}
&\left|\ \vint{B(x_0,r)}f \left(\xi_j\otimes \eta\right)d\mu^{s}-\vint{B(x_0,r)}f \left(\xi_j\otimes \eta(x_0)\right)d\mu^{s}\,\right|\\
&\qquad \le R\vint{B(x_0,r)}|\eta-\eta(x_0)|\,d\mu^s<R\eps
\end{split}
\end{equation}
by \eqref{eq:conditions for point in singular set of mu}.
Since $\langle f,\nu_x\rangle\in L^1(\Omega,\mu)$ by \eqref{eq:property of Young measure wrt mu}, excluding a further $\mu^s$-negligible set and possibly making $r>0$ smaller, we can also assume
that
\begin{equation}\label{eq:lebesgue point for nu_x}
\vint{B(x_0,r)}|\langle f, \nu_x\rangle -\langle f, \nu_{x_0}\rangle|\,d\mu<\eps.
\end{equation}
Clearly $f^{\infty}\equiv 0$ and then by \eqref{eq:Young measure wrt to mu}, we have
\[
f\left(\frac{dDu_j}{d\mu}\right)\mu \overset{*}{\rightharpoondown} \langle f,\nu_x\rangle a\,\mathcal L^n+\langle f,\nu_x\rangle \,\mu^{s}\qquad\textrm{in }\mathcal M(\Omega).
\]
Now by \eqref{eq:lebesgue point for nu_x},
\begin{alignat}{2}
\nonumber \langle f,\nu_{x_0}\rangle &\le \vint{B(x_0,r)}\langle f,\nu_x\rangle \,d\mu+\eps\\
\nonumber & \le \liminf_{j\to\infty}\,\vint{B(x_0,r)} f \left(\frac{dDu_j}{d\mu}\right)d\mu+\eps\\
\nonumber & \overset{f\le 1}{\le} \liminf_{j\to\infty}\,\vint{B(x_0,r)}f \left(\frac{dD u_j}{d\mu^s}\right)d\mu^s+\frac{\int_{B(x_0,r)}a\,d\mathcal L^n}{\mu(B(x_0,r))}+\eps\\
\nonumber &\overset{\eqref{eq:conditions for point in singular set of mu}}{\le} \liminf_{j\to\infty}\,\vint{B(x_0,r)}f \left(\frac{dD u_j}{d\mu^s}\right)d\mu^{s}+2\eps\\
\nonumber &\overset{\eqref{eq:rectifiability of singular part of mu}}{=} \liminf_{j\to\infty}\,\vint{B(x_0,r)}f \left(\xi_j\otimes \eta\right)d\mu^{s}+2\eps\\
\nonumber &\overset{\eqref{eq:consequence of Lebesgue point for eta}}{\le} \liminf_{j\to\infty}\,\vint{B(x_0,r)}f \left(\xi_j\otimes \eta(x_0)\right)d\mu^{s}+R\eps+2\eps\\
\nonumber &\le 3R\eps,
\end{alignat}
since $f$ is zero on the hyperplane $\R^N\otimes \eta(x_0)$.
Letting $\eps\to 0$, we get $\langle f,\nu_{x_0}\rangle=0$, implying that $\nu_{x_0}$ is carried on the set $(\R^{N}\otimes \eta(x_0))\cup B(0,R)^{c}$. Letting $R\to \infty$, we obtain that $\nu_{x_0}$ is carried on the hyperplane $\R^{N}\otimes \eta(x_0)$.

By choosing $f$ to be the identity mapping on $\R^{N\times n}$ in \eqref{eq:Young measure wrt to mu} (componentwise, to be precise), and noting that $Du_j\overset{*}{\rightharpoondown} Du$ in $\mathcal M(\Omega)$ (the fact that $(u_j)$ is a norm-bounded sequence in $\BV(\Omega;\R^N)$ implies that $u_j\overset{*}{\rightharpoondown} u$ in $\BV(\Omega;\R^N)$), we get for the singular parts 
\begin{equation}\label{eq:singular part of variation measure and Young measure wrt mu}
D^{s}u=\langle \id,\nu_x\rangle\,\mu^{s}+\langle \id,\nu_x^{\infty}\rangle\,\lambda_{\nu}^{s}
\end{equation}
in $\Omega$.
Using the fact that $\langle \id,\nu_x\rangle \in\R^{N}\otimes\eta(x)$ for $\mu^{s}$-almost every $x\in \Omega$, we get
\[
\langle \id,\nu_x^{\infty}\rangle\,\frac{d\lambda_{\nu}^{s}}{d\mu^{s}}=\frac{dD^{s}u}{d\mu^{s}}(x)-\langle \id,\nu_x\rangle= \xi(x)\otimes\eta(x)-\langle \id,\nu_x\rangle\,\in\,\R^{N}\otimes\eta(x)
\]
for $\mu^{s}$-almost every $x\in \Omega$.
Since $F^{\infty}$ is quasiconvex and 1-homogenous, we have $F^{\infty}(A)=(F^{\infty})^{c}(A)$ for all rank one $A\in\R^{N\times n}$, where the convex envelope is defined by
\[
G^{c}(A):=\sup\left\{H(A):\,H\textrm{ convex, } H\le G\right\},
\]
see \cite[Corollary 1.2]{KK}.
According to \cite[Lemma 5.5 (i)]{AB}, for any convex function $g:\R^{N\times n}\to \R$ we have
\begin{equation}\label{eq:consequence of convexity}
g(A_1+A_2)\le g(A_1)+g^{\infty}(A_2)
\end{equation}
for all $A_1,A_2\in\R^{N\times n}$. Note that in \eqref{eq:singular part of variation measure and Young measure wrt mu}, all three terms belong to $\R^N\otimes \eta(x)$ for $\mu^s$-almost every $x\in\Omega$. Since $\xi\mapsto F(\xi\otimes\eta(x))$ is convex for a fixed $x\in\Omega$ by the rank one convexity of $F$, we get by \eqref{eq:consequence of convexity}
\begin{align*}
F\left(\frac{dD^{s}u}{d\mu^{s}}(x)\right)
&\le F\left(\langle \id,\nu_x\rangle\right)+F^{\infty}\left(\langle \id, \nu_x^{\infty}\rangle \right)\frac{d\lambda_{\nu}^{s}}{d\mu^{s}}\\
&= F\left(\langle \id,\nu_x\rangle\right)+(F^{\infty})^{c}\left(\langle \id, \nu_x^{\infty}\rangle \right)\frac{d\lambda_{\nu}^{s}}{d\mu^{s}}\\
&\overset{Jensen}{\le} \langle F,\nu_x\rangle+\langle (F^{\infty})^{c}, \nu_x^{\infty}\rangle \frac{d\lambda_{\nu}^{s}}{d\mu^{s}}\\
&\le \langle F,\nu_x\rangle+\langle F^{\infty}, \nu_x^{\infty}\rangle \frac{d\lambda_{\nu}^{s}}{d\mu^{s}}
\end{align*}
for $\mu^s$-almost every $x\in\Omega$.
Combining this with \eqref{eq:Young measure wrt to mu} --- note that  $F\in \mathbf{SQ}(\R^{N\times n})\subset \mathbf{E}(\Omega;\R^{N\times n})$ --- we get for any ball $B(y,r)\subset\Omega$
\[
\int_{B(y,r)}F\left(\frac{dD^{s}u}{d\mu^{s}}\right)\,d\mu^{s}\le \liminf_{j\to\infty}\int_{B(y,r)}F\left(\frac{dDu_j}{d\mu}\right)\,d\mu.
\]
\end{proof}

\subsubsection{Combining the estimates}

Now we combine the previous two lemmas to prove Proposition \ref{prop:estimate from below}.

\begin{proof}[Proof of Proposition \ref{prop:estimate from below}.]
We keep assuming that $F\in \mathbf{SQ}(\R^{N\times n})$ with parameters $i,r_i\ge 1$ and linear growth $0\le F(A)\le M(1+|A|)$. Let $(u_j)\subset W_{\mu}^{1,1}(\Omega;\R^N)$ be a sequence with $u_j\to u$ in $L^1(\Omega;\R^N)$, and we can also assume that \eqref{eq:finiteness of sequence in lower semicontinuity} holds, so that the assumptions of both Lemma \ref{lem:estimate from below} and Lemma \ref{lem:estimate from below for singular part of mu} are satisfied.

Fix $\eps>0$. 
Let $H\subset\Omega$ be a Borel set with $\mathcal L^{n}(H)=0$ and $\mu^{s}(\Omega\setminus H)=0$. Also, let $D\subset \Omega$ be a Borel set with $\mu(D)=0$ and $|D^{s,\mu}u|(\Omega\setminus D)=0$. Take an open set $G\subset \Omega$ with $G\supset H\setminus D$ and
\begin{equation}\label{eq:conditions on G}
|D^{s,\mu}u|(G)+ \int_G  M(a+|\nabla u|)\,d\mathcal L^n<\eps.
\end{equation}
Consider the fine cover $\{\overline{B}(x,R)\}_{x\in H\setminus D}$ of the set $H\setminus D$, with the balls $\overline{B}(x,R)$ contained in $G$ and satisfying $\mu^{s}(\partial B(x,R))=0$.
By Vitali's covering theorem, we can pick a countable, disjoint collection $\{B_i\}_{i\in\N}:=\{B(x_i,R_i)\}_{i\in\N}$ with 
\begin{equation}\label{eq:singular part of mu covered by balls}
\mu^{s}\left((H\setminus D)\setminus \bigcup_{i=1}^{\infty}B_i\right)=0 \quad \textrm{and thus}\quad \mu^{s}\left(\Omega\setminus \bigcup_{i=1}^{\infty}B_i\right)=0.
\end{equation}
Pick also $m\in\N$ such that
\begin{equation}\label{eq:truncate union of balls}
\mu \left(\bigcup_{i=m}^{\infty}\overline{B}_i\right)+ M\int_{\bigcup_{i=m}^{\infty}\overline{B}_i}\left(1+\frac{d|D^{s}u|}{d\mu^{s}}\right)\,d\mu^{s}<\eps.
\end{equation}
By \eqref{eq:lower semicontinuity for singular part of mu} we have
\begin{equation}\label{eq:estimate from below near singular part of mu}
\begin{split}
\liminf_{j\to\infty} \int_{\bigcup_{i=1}^{m}B_i} F\left(\frac{dD u_j}{d\mu}\right)\,d\mu
&\ge \int_{\bigcup_{i=1}^{m}B_i}F\left(\frac{dD^s u}{d\mu^{s}}\right)d\mu^{s}\\
&\ge \int_{\Omega}F\left(\frac{dD^s u}{d\mu^{s}}\right)d\mu^{s}-\eps
\end{split}
\end{equation}
by \eqref{eq:truncate union of balls} and the linear growth of $F$.

By combining  \eqref{eq:singular part of mu covered by balls} and \eqref{eq:truncate union of balls}, we get
\begin{equation}\label{eq:singular part of mu almost covered by balls}
\mu^{s}\left(\Omega\setminus\bigcup_{i=1}^{m}\overline{B}_i\right)<\eps.
\end{equation}
Moreover, we can write \eqref{eq:estimate from below where mua lives} with the choice $U=\Omega\setminus \bigcup_{i=1}^{m}\overline{B_i}$:
\[
\begin{split}
 &\liminf_{j\to\infty} \int_{\Omega\setminus\bigcup_{i=1}^{m}\overline{B}_i} F\left(\frac{dDu_j}{d\mu}\right)\,d\mu \\
 &\quad\ge \int_{\Omega\setminus\bigcup_{i=1}^{m}\overline{B}_i} F\left(\frac{\nabla u}{a}\right) a\,d\mathcal L^{n}+\int_{\Omega\setminus\bigcup_{i=1}^{m}\overline{B}_i}F^{\infty}\left(\frac{dD^{s}u}{d|D^{s}u|}\right)d|D^{s}u|\\
 &\qquad\quad -(Mr_i+i)\mu^{s}\left(\Omega\setminus\bigcup_{i=1}^{m}\overline{B}_i\right)\\
 &\quad\overset{\eqref{eq:singular part of mu almost covered by balls}}{\ge} \int_{\Omega\setminus\bigcup_{i=1}^{m}\overline{B}_i} F\left(\frac{\nabla u}{a}\right) a\,d\mathcal L^{n}+\int_{\Omega\setminus\bigcup_{i=1}^{m}\overline{B}_i}F^{\infty}\left(\frac{dD^{s,\mu}u}{d|D^{s,\mu}u|}\right)d|D^{s,\mu}u|\\
&\qquad\quad-(Mr_i+i)\eps\\
&\quad\overset{\eqref{eq:conditions on G}}{\ge} \int_{\Omega\setminus\bigcup_{i=1}^{m}\overline{B}_i} F\left(\frac{\nabla u}{a}\right) a\,d\mathcal L^{n}+\int_{\Omega}F^{\infty}\left(\frac{dD^{s,\mu}u}{d|D^{s,\mu}u|}\right)d|D^{s,\mu}u|\\
&\qquad\quad -M\eps-(Mr_i+i)\eps\\
&\quad\overset{\eqref{eq:conditions on G}}{\ge} \int_{\Omega} F\left(\frac{\nabla u}{a}\right) a\,d\mathcal L^{n}+\int_{\Omega}F^{\infty}\left(\frac{dD^{s,\mu}u}{d|D^{s,\mu}u|}\right)d|D^{s,\mu}u|\\
&\qquad\quad-\eps-M\eps-(Mr_i+i)\eps.
\end{split}
\]
Combining this with \eqref{eq:estimate from below near singular part of mu}, we get
\begin{align*}
&\liminf_{j\to\infty} \int_{\Omega} F\left(\frac{dDu_j}{d\mu}\right)\,d\mu \\
&\qquad\qquad \ge \int_{\Omega} F\left(\frac{\nabla u}{a}\right) a\,d\mathcal L^{n}+\int_{\Omega}F^{\infty}\left(\frac{dD^{s,\mu}u}{d|D^{s,\mu}u|}\right)d|D^{s,\mu}u| \\
&\qquad\qquad\qquad+\int_{\bigcup_{i=1}^{m}B_i}F\left(\frac{dD^s u}{d\mu^{s}}\right)d\mu^{s}-3(Mr_i+i)\eps\\
&\qquad\qquad \overset{\eqref{eq:truncate union of balls}}{\ge} \int_{\Omega} F\left(\frac{\nabla u}{a}\right) a\,d\mathcal L^{n}+\int_{\Omega}F^{\infty}\left(\frac{dD^{s,\mu}u}{d|D^{s,\mu}u|}\right)d|D^{s,\mu}u| \\
&\qquad\qquad\qquad+\int_{\Omega}F\left(\frac{dD^s u}{d\mu^{s}}\right)d\mu^{s}-4(Mr_i+i)\eps.
\end{align*}
By letting $\eps\to 0$, we get the estimate from below.

Finally, we remove the assumption  $F\in \mathbf{SQ}(\R^{N\times n})$.
By \cite[Lemma 6.3]{KK}, we can find a sequence $F_i\in \mathbf{SQ}(\R^{N\times n})$ with $F_i(A)\searrow F(A)$ and $F_i^{\infty}(A)\searrow F^{\infty}(A)$ pointwise for every $A\in \R^{N\times n}$ as $i\to\infty$, and by making $M$ slightly larger, if necessary, we can also assume that  $m|A|\le F_i(A)\le M(1+|A|)$ for every $i\in\N$.

As before, let $(u_j)\subset W^{1,1}_{\mu}(\Omega;\R^N)$ with $u_j\to u$ in $L^1(\Omega;\R^N)$. We can again assume that \eqref{eq:finiteness of sequence in lower semicontinuity} holds, and by the coercivity $m|A|\le F(A)$, this implies that $(u_j)$ is a norm-bounded sequence in $\BV(\Omega;\R^N)$.
Thus by Theorem \ref{thm:generation of generalized Young measures for general mu}, a subsequence of $Du_j$ (not relabeled) generates a generalized Young measure $(\nu_x,\lambda_{\nu},\nu_x^{\infty})$ with respect to $\mu$. Thus we have for any $i\in\N$
\begin{equation}\label{eq:approximation step 1}
\begin{split}
&\int_{\Omega}F\left(\frac{dDu}{d\mu}\right)\,d\mu+\int_{\Omega}F^{\infty}
\left(\frac{dD^{s,\mu}u}{d|D^{s,\mu}u|}\right)\,d|D^{s,\mu}u|\\
&\qquad\qquad  \le \int_{\Omega}F_i\left(\frac{dDu}{d\mu}\right)\,d\mu+\int_{\Omega}F_i^{\infty}
\left(\frac{dD^{s,\mu}u}{d|D^{s,\mu}u|}\right)\,d|D^{s,\mu}u|\\
&\qquad\qquad  \le \liminf_{j\to\infty}\int_{\Omega}F_i\left(\frac{dDu_j}{d\mu}\right)\,d\mu\\
&\qquad\qquad  =\int_{\Omega}\langle F_i,\nu_x\rangle\,d\mu+\int_{\overline{\Omega}}\langle F_i^{\infty}, \nu_x^{\infty}\rangle\,d\lambda_{\nu}.
\end{split}
\end{equation}
On the other hand, by Lebesgue's dominated convergence theorem, as well as the fact that $F\in \mathbf{Q}(\R^{N\times n})\cap \mathbf{R}(\Omega;\R^{N\times n})\subset \mathbf{E}(\Omega;\R^{N\times n})$,
\begin{equation}\label{eq:approximation step 2}
\begin{split}
\lim_{i\to\infty}\left(\int_{\Omega}\langle F_i,\nu_x\rangle\,d\mu+\int_{\overline{\Omega}}\langle F_i^{\infty}, \nu_x^{\infty}\rangle\,d\lambda_{\nu}\right)
&= \int_{\Omega}\langle F,\nu_x\rangle\,d\mu+\int_{\overline{\Omega}}\langle F^{\infty}, \nu_x^{\infty}\rangle\,d\lambda_{\nu}\\
&=\lim_{j\to\infty}\int_{\Omega}F\left(\frac{dDu_j}{d\mu}\right)\,d\mu.
\end{split}
\end{equation}
By combining \eqref{eq:approximation step 1} and \eqref{eq:approximation step 2}, we get the desired estimate from below.

\end{proof}

\subsection{Estimate from above}

Recall from \eqref{eq:definition by relaxation} the definition of the functional $\mathcal F_{*}$ by relaxation. We prove that the estimate from above holds for the integral representation of $\mathcal F_*$. Here our proof is not based on the theory of Young measures, so we can allow for somewhat weaker assumptions on $F$.

\begin{proposition}\label{prop:estimate from above}
Let $\Omega\subset\R^{n}$ be a bounded open set, let $\mu\in\mathcal M^+(\Omega)$ with $\mathcal L^{n}\ll \mu$, let $F\in \mathbf{Q}(\R^{N\times n})$ with
\[
0\le F(A)\le M(1+|A|),\quad A\in\R^{N\times n}
\]
for some $M\ge 1$, and let $u\in\BV(\Omega;\R^N)$. Then we have
\[
\mathcal F_{*}(u,\Omega)\le \int_{\Omega}F\left(\frac{dDu}{d\mu}\right)\,d\mu+\int_{\Omega} F^{\infty}\left(\frac{dD^{s,\mu}u}{d|D^{s,\mu}u|}\right)\,d|D^{s,\mu}u|.
\]
\end{proposition}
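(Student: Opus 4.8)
The plan is to produce, for an arbitrary $u\in\BV(\Omega;\R^N)$, a \emph{recovery sequence}: functions $u_j\in W^{1,1}_{\mu}(\Omega;\R^N)$ with $u_j\to u$ in $L^1(\Omega;\R^N)$ and $\limsup_{j\to\infty}\int_{\Omega}F(dDu_j/d\mu)\,d\mu$ bounded by the right-hand side of the Proposition. Since $\mathcal F_{*}(u,\Omega)\le\liminf_{j}\int_{\Omega}F(dDu_j/d\mu)\,d\mu$ for any such sequence by the definition \eqref{eq:definition by relaxation}, this suffices. A first reduction lets us assume $F\in\mathbf{SQ}(\R^{N\times n})$: taking the approximants $F_i\in\mathbf{SQ}(\R^{N\times n})$ of \cite[Lemma 6.3]{KK} with $F_i\searrow F$ and $F_i^{\infty}\searrow F^{\infty}$ pointwise, monotone convergence gives $\int_{\Omega}F_i(dDu/d\mu)\,d\mu+\int_{\Omega}F_i^{\infty}(dD^{s,\mu}u/d|D^{s,\mu}u|)\,d|D^{s,\mu}u|\searrow$ the right-hand side of the Proposition, while $F\le F_i$ forces $\mathcal F_{*}(u,\Omega)$ (computed with $F$) to be dominated by the analogous quantity for each $F_i$; note also that $dD^{s,\mu}u/d|D^{s,\mu}u|$ is of rank one for $|D^{s,\mu}u|$-a.e.\ point (Alberti's theorem, since $D^{s,\mu}u$ is a part of $D^s u$), so that on these matrices $F^{\infty}=F^{\#}$ is a genuine limit, consistently with the $\mathbf{SQ}$-approximation.

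The key point is that $\int_{\Omega}F(dDw/d\mu)\,d\mu$ is a Reshetnyak functional in the $\R^{N\times n}\times\R$-valued measure $(Dw,\mu)$. For $F\in\mathbf{SQ}(\R^{N\times n})$ the recession function $F^{\infty}=F^{\#}$ exists as a genuine limit and is (Lipschitz) continuous, so the perspective integrand
\[
h(A,t):=tF(A/t)\ \ (t>0),\qquad h(A,0):=F^{\infty}(A),\qquad h(A,t):=h(A,\max\{t,0\})\ \ (t\le0)
\]
is continuous on $\R^{N\times n}\times\R$, positively $1$-homogeneous and of linear growth, hence belongs to $\mathbf{E}(\Omega;\R^{N\times n}\times\R)$ (constant in $x$) with $h^{\infty}=h$. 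Writing $\mu=a\,\mathcal L^n+\mu^s$, for $w\in W^{1,1}_{\mu}(\Omega;\R^N)$ one has $(Dw,\mu)=\big((dDw/d\mu)a,\,a\big)\mathcal L^n+\big(dDw/d\mu,\,1\big)\mu^s$, and $1$-homogeneity of $h$ gives
\[
\mathcal R(Dw,\mu):=\int_{\overline{\Omega}}h\big(\nabla w,a\big)\,d\mathcal L^n+\int_{\overline{\Omega}}h\Big(\tfrac{d(Dw,\mu)^s}{d|(Dw,\mu)^s|}\Big)\,d|(Dw,\mu)^s|=\int_{\Omega}F\Big(\tfrac{dDw}{d\mu}\Big)\,d\mu.
\]
A parallel computation, splitting $Du$ according to a carrier $H$ of $\mu^s$ (with $\mathcal L^n(H)=0$) and using Alberti's theorem as above, shows that for general $u\in\BV(\Omega;\R^N)$ the same functional $\mathcal R(Du,\mu)$ equals the right-hand side of the Proposition.

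Granting this, everything reduces to the following \emph{area-strict density} statement, which is the heart of the matter: every $u\in\BV(\Omega;\R^N)$ admits $u_j\in W^{1,1}_{\mu}(\Omega;\R^N)$ with $u_j\to u$ in $L^1$, $Du_j\overset{*}{\rightharpoondown}Du$ (so $(Du_j,\mu)\overset{*}{\rightharpoondown}(Du,\mu)$), and $\langle(Du_j,\mu)\rangle(\overline{\Omega})\to\langle(Du,\mu)\rangle(\overline{\Omega})$. Indeed, Theorem \ref{thm:Reshetnyak} with $f=h$ and $\gamma_j=(Du_j,\mu)$ then gives $\int_{\Omega}F(dDu_j/d\mu)\,d\mu=\mathcal R(Du_j,\mu)\to\mathcal R(Du,\mu)$, which is the desired bound. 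To construct the $u_j$ one decomposes $Du=\nabla u\,\mathcal L^n+g\,\mu^s+D^{s,\mu}u$ with $g$ the $\mu^s$-density of $Du\mres H$ and $D^{s,\mu}u=\xi\otimes\eta\,|D^{s,\mu}u|$ rank one; localize on a fine Vitali cover adapted to $D^{s,\mu}u$, and on each ball replace $u$ by a function whose derivative is $\ll\mu$, obtained by a one-dimensional mollification in the direction $\eta$ producing gradient spikes of height $\to\infty$ supported where $\mu$ charges. Since $\sqrt{1+a^2(1+|b_j|^2)}\sim a|b_j|$ and $\sqrt{1+|b_j|^2}\sim|b_j|$ on the spike regions (here $b_j:=dDu_j/d\mu$), the $\langle\cdot\rangle$-mass absorbs precisely $|D^{s,\mu}u|(\overline{\Omega})$ in the limit, while on the complement the area densities converge by dominated (resp.\ monotone) convergence after a Lusin-type approximation of $a$; the local modifications are glued by a partition of unity, the gluing errors being $\int|u|(\cdots)$- and $M(1+|\cdot|)$-terms over sets of arbitrarily small measure.

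The main obstacle is exactly this last construction when $a=d\mu/d\mathcal L^n$ is unbounded or not bounded away from $0$: the regularization yielding a $\mu$-absolutely continuous derivative must take place where $\mu$ lives, and one has to arrange simultaneously $u_j\to u$ in $L^1$ and the sharp identity $\langle(Du_j,\mu)\rangle(\overline{\Omega})\to\langle(Du,\mu)\rangle(\overline{\Omega})$; controlling $\int_{\Omega}\sqrt{1+a^2(1+|b_j|^2)}\,d\mathcal L^n$ over the super-level sets $\{a>m\}$ forces one to truncate (their $\mu$-mass tends to $0$) and to absorb the leftovers using only the at-most-linear growth $0\le F\le M(1+|\cdot|)$ --- which is also where continuity of $h$, equivalently of $F^{\infty}$ on rank-one directions (automatic for $F\in\mathbf{SQ}$), enters. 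When $a$ is bounded the spikes can be produced on the ambient Lebesgue scale and the argument is classical.
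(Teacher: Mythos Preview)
Your reformulation via the perspective function $h(A,t)=tF(A/t)$, $h(A,0)=F^{\infty}(A)$ is correct and elegant: for $F\in\mathbf{SQ}(\R^{N\times n})$ the function $h$ is indeed continuous and positively $1$-homogeneous, and the identities $\mathcal R(Dw,\mu)=\int_{\Omega}F(dDw/d\mu)\,d\mu$ for $w\in W^{1,1}_{\mu}$ and $\mathcal R(Du,\mu)=\textrm{(right-hand side)}$ for general $u$ are valid. The reduction to $\mathbf{SQ}$ is also fine.

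The gap is precisely where you locate it: the ``area-strict density'' of $W^{1,1}_{\mu}$ in $\BV$ with respect to the paired measure $(Du,\mu)$ is asserted but not proved. Your sketch (Vitali covers, one-dimensional mollification in the direction $\eta$, gradient spikes ``supported where $\mu$ charges'') does not make clear how to achieve $\langle(Du_j,\mu)\rangle(\overline{\Omega})\to\langle(Du,\mu)\rangle(\overline{\Omega})$, and the closing paragraph essentially concedes this. Note for instance that if $u_j$ is smooth then the $\mu^{s}$-part of $(Du_j,\mu)$ is $(0,1)\mu^{s}$, contributing $\mu^{s}(\Omega)$ to the area, whereas the target contributes $\int\sqrt{1+|dD^{s}u/d\mu^{s}|^{2}}\,d\mu^{s}$; the missing mass must be manufactured in the $\mathcal L^{n}$-part, and your description does not explain how, especially when $a$ is unbounded.

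The paper sidesteps this density problem entirely. Instead of seeking a sequence that is area-strict for the \emph{pair} $(Du,\mu)$, it localises: take an open $G$ containing a carrier of $D^{s,\mu}u$ with $\mu(G)$ and $\mathcal L^{n}(G)$ as small as one likes, and set $u_j:=u$ on $\Omega\setminus G$ (where already $Du\ll\mu$) and $u_j:=v_j$ on $G$, with $v_j\in\BV_u(G)\cap C^{\infty}(G)$ the standard $\langle\cdot\rangle$-strict approximants of Lemma~\ref{lem:smooth area-strict approximation of BV}. Smoothness gives $Dv_j\ll\mathcal L^{n}\ll\mu$, so $u_j\in W^{1,1}_{\mu}$ for free. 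The $\mathbf{SQ}$ structure is then used not to define $h$, but to compare integrals: since $F_i(A)=F_i^{\infty}(A)-i$ for $|A|\ge r_i$ and $F_i^{\infty}$ is $1$-homogeneous, one has $\int_{G}F_i(\nabla v_j/a)\,a\,d\mathcal L^{n}\le \int_{G}F_i(\nabla v_j)\,d\mathcal L^{n}+C(i)\big(\mathcal L^{n}(G)+\mu(G)\big)$, and to the last integral the ordinary Reshetnyak theorem (with respect to $\mathcal L^{n}$, not the pair) applies. The error $C(i)(\mathcal L^{n}(G)+\mu(G))$ is killed by the choice of $G$, and letting $i\to\infty$ finishes. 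Thus no new density theorem is needed; the only approximation used is Lemma~\ref{lem:smooth area-strict approximation of BV}, and the passage from the $\mu$-functional to the $\mathcal L^{n}$-functional is handled by a pointwise estimate rather than by a convergence statement.
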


\begin{proof}
Again, by Definition \ref{def:special quasiconvex integrands} and \cite[Lemma 6.3]{KK} we can find a sequence $F_i\in \mathbf{SQ}(\R^{N\times n})$ with parameters $i\in\N$, $r_i>0$ such that $F_i(A)\searrow F(A)$ and $F_i^{\infty}(A)\searrow F^{\infty}(A)$ pointwise for every $A\in \R^{N\times n}$ as $i\to\infty$. Moreover, by making $M$ slightly larger, if necessary, we have that $0\le F_i(A)\le M(1+|A|)$ for all $i\in\N$ and $A\in\R^{N\times n}$.
Fix $i\in\N$.

The proof is based on mollifying the function $u$ in a small set.
Take a Borel set $D\subset\Omega$ with $|D^{s,\mu}u|(\Omega\setminus D)=0$ and $\mu(D)=0$.
Then take an open set $G\supset D$ with $\mathcal L^n(G)$ and $\mu(G)$ so small that
\begin{equation}\label{eq:properties of the set G}
\int_{G} M(1+|\nabla u|)\,d\mathcal L^n+\int_{G} M\left|\frac{d D^s u}{d \mu}\right|\,d\mu
+M(r_i+i)\mathcal L^n(G)+M(1+r_i)\mu(G)
\end{equation}
is less than $1/i$; this is possible by the absolute continuity of integrals.
By Lemma \ref{lem:smooth area-strict approximation of BV} we can pick a sequence $(v_j)\subset \BV_u(G;\R^N)\cap C^{\infty}(G;\R^N)$ (note boundary values) that converges to $u$ $\langle\cdot\rangle$-strictly in $\BV(G;\R^N)$. Fix also $j\in\N$.

Using the linear growth of $F_i$, we estimate
\begin{equation*}
\begin{split}
 \int_{G} F\left(\frac{dDv_j}{d\mu}\right)\,d\mu
&\le  \int_{G} F_i\left(\frac{dDv_j}{d\mu}\right)\,d\mu
= \int_{G} F_i\left(\frac{\nabla v_j}{a}\right)a\,d\mathcal L^n\\
&\le \int_{G\cap\{|\nabla v_j/a|>r_i\}} F_i\left(\frac{\nabla v_j}{a}\right)a\,d\mathcal L^n+M(1+r_i)\mu(G),
\end{split}
\end{equation*}
where by the fact that $F(A)=F^{\infty}(A)-i$ for $|A|\ge r_i$, the last integral equals
\begin{equation*}
\begin{split}
  \int_{G\cap\{|\nabla v_j/a|>r_i\}} &\left(F_i^{\infty}\left(\frac{\nabla v_j}{a}\right)  -i\right)  a\,d\mathcal L^n
 \le  \int_{G\cap\{|\nabla v_j/a|>r_i\}} F_i^{\infty}\left(\frac{\nabla v_j}{a}\right)a\,d\mathcal L^n\\
 &\qquad\qquad\quad =  \int_{G\cap\{|\nabla v_j/a|>r_i\}} F_i^{\infty}(\nabla v_j)\,d\mathcal L^n\\
 &\qquad\qquad\quad\le  \int_{G} F_i^{\infty}\left(\nabla v_j\right)\,d\mathcal L^n\\
 &\qquad\qquad\quad\le  \int_{G\cap\{|\nabla v_j|>r_i\}} F_i^{\infty}\left(\nabla v_j\right)\,d\mathcal L^n+Mr_i\mathcal L^n(G)\\
 &\qquad\qquad\quad=  \int_{G\cap\{|\nabla v_j|>r_i\}} (F_i\left(\nabla v_j\right)+i)\,d\mathcal L^n+Mr_i\mathcal L^n(G)\\
 &\qquad\qquad\quad\le  \int_{G} F_i\left(\nabla v_j\right)\,d\mathcal L^n+M(r_i+i)\mathcal L^n(G).
\end{split}
\end{equation*}
Now, since $F_i\in\mathbf{SQ}(\R^{N\times n})\subset \mathbf{E}(G;\R^{N\times n})$ (constant in the $x$-variable) and $v_j\to u$ $\langle\cdot\rangle$-strictly in $\BV(G;\R^N)$, we can apply Reshetnyak's continuity theorem, Theorem \ref{thm:Reshetnyak}, to obtain
\begin{align*}
&\liminf_{j\to\infty} \int_{G} F\left(\frac{dDv_j}{d\mu}\right)\,d\mu\\
& \qquad \le \liminf_{j\to\infty}\int_{G} F_i\left(\nabla v_j\right)\,d\mathcal L^n
+M(r_i+i)\mathcal L^n(G)+M(1+r_i)\mu(G)\\
& \qquad= \int_{G} F_i(\nabla u)\,d\mathcal L^n+\int_{G} F_i^{\infty}\left(\frac{d D^{s} u}{d |D^{s} u|}\right)\,d|D^{s} u|\\
&\qquad\quad\ +M(r_i+i)\mathcal L^n(G)+M(1+r_i)\mu(G)\\
& \qquad\le \int_{G} M(1+|\nabla u|)\,d\mathcal L^n+\int_{G} M\left|\frac{d D^s u}{d \mu}\right|\,d\mu\\
&\qquad\quad\ +\int_{G} F_i^{\infty}\left(\frac{d D^{s,\mu} u}{d |D^{s\,\mu} u|}\right)\,d|D^{s,\mu} u|+M(r_i+i)\mathcal L^n(G)+M(1+r_i)\mu(G)\\
&\qquad\le \int_{G} F_i^{\infty}\left(\frac{d D^{s,\mu} u}{d |D^{s\,\mu} u|}\right)\,d|D^{s,\mu} u|+1/i
\end{align*}
by \eqref{eq:properties of the set G}.
Then define for each $j\in\N$
\[
u_{j}:=
\left\{ \begin{alignedat}{2}
& v_j\qquad && \textrm{in } G,\\
& u\qquad && \textrm{in } \Omega\setminus G.
 \end{alignedat}
\right.
\]
The fact that $v_j\in\BV_u(G;\R^N)$ implies by definition (given before Lemma \ref{lem:smooth area-strict approximation of BV}) that $Du_j=Du\mres \Omega\setminus G+Dv_j\mres G$.
Thus it is clear that $u_j\in W^{1,1}_{\mu}(\Omega; \R^{N})$, and also $u_j\to u$ in $L^{1}(\Omega;\R^N)$, so that $u_j$ is an admissible sequence for $\mathcal F_*(u,\Omega)$.
In total, we obtain
\begin{align*}
\mathcal F_{*}  (u,\Omega)
&\le \liminf_{j\to\infty} \int_{\Omega} F\left(\frac{dDu_j}{d\mu}\right)\,d\mu\\
&=\liminf_{j\to\infty} \int_{G} F\left(\frac{dDv_j}{d\mu}\right)\,d\mu+\int_{\Omega\setminus G} F\left(\frac{dDu}{d\mu}\right)\,d\mu\\
&\le \int_{G} F_i^{\infty}\left(\frac{d D^{s,\mu} u}{d |D^{s\,\mu} u|}\right)\,d|D^{s,\mu} u|+\int_{\Omega\setminus G} F\left(\frac{dDu}{d\mu}\right)\,d\mu+1/i\\
&\le \int_{\Omega} F_i\left(\frac{dDu}{d\mu}\right)\,d\mu+\int_{\Omega} F_i^{\infty}\left(\frac{d D^{s,\mu} u}{d |D^{s\,\mu} u|}\right)\,d|D^{s,\mu} u|+1/i.
\end{align*}
Letting $i\to\infty$, by Lebesgue's monotone or dominated convergence we get the desired estimate from above.
\end{proof}

\subsection{Some examples}
Let us briefly consider why it is necessary to assume that $\mathcal L^{n}\ll \mu$, at least in order to obtain the integral representation \eqref{eq:integral representation}.
The reason is that the estimate from above may be violated without this assumption. We note that the integral representation \eqref{eq:integral representation} always takes a value at most
\[
M\mu(\Omega)+M|Du|(\Omega),
\]
which is finite for a $\BV$ function $u\in\BV(\Omega;\R^N)$. On the other hand, if it is not true that $\mathcal L^{n}\ll \mu$, then there can be a large set not "seen" by the measure $\mu$, and as a result it may simply be impossible to approximate certain $\BV$ functions in the $L^{1}$-sense by functions in the class $W^{1,1}_{\mu}(\Omega;\R^N)$. Consider the following examples.

\begin{example}
Suppose that there is an \emph{open} set $B\subset\Omega$ (which we can assume to be a ball) with $\mu(B)=0$ but of course $\mathcal L^{n}(B)>0$. Take a nonconstant $u\in C^{1}_c(B)$, and note that all functions $u_j\in W^{1,1}_{\mu}(\Omega)$ satisfy $|Du_j|(B)=0$ and are thus constant in the ball $B$. Thus there is no sequence of functions $u_j\in W^{1,1}_{\mu}(\Omega)$ with $u_j\to u$ in $L^{1}(\Omega)$, and consequently $\mathcal F_{*}(u,\Omega)=\infty$.
\end{example}

Even if the support of $\mu$ is the whole of $\overline{\Omega}$, the estimate from above may fail.
\begin{example}
Take $\Omega$ to be the open unit square on the plane, and let $A\subset\Omega$ be a "fat" Sierpinski carpet, with $\mathcal L^{2}(A)=1/2$. Then define the weight $w=\mathbbm{1}_{\Omega\setminus A}$, and $\mu:=w\,\mathcal L^{2}$. Clearly the absolute continuity assumption $\mathcal L^2\ll \mu$ is violated, but the support of $\mu$ is the whole of $\overline{\Omega}$. By using the properties of $\BV$ functions restricted to lines, see e.g. \cite[Section 3.11]{AFP}, we obtain that any function $v\in W^{1,1}_{\mu}(\Omega)$ is constant almost everywhere in $A$. If we define a BV function $u\in\BV(\Omega)$ e.g. as  $u(x,y):=x$, there is no sequence $u_j\in W^{1,1}_{\mu}(\Omega)$ for which $u_j\to u$ in $L^{1}(\Omega)$, and consequently $\mathcal F_{*}(u,\Omega)=\infty$.
\end{example}

However, it is not clear to us whether the assumption $\mathcal L^n\ll \mu$, or the assumption on the integrand $F\in \mathbf{R}(\Omega;\R^{N\times n})$, are necessary in our main result, Theorem \ref{thm:lower semicontinuity result in intro}.


\section{The lower semicontinuity theorem}

From the integral representation, we obtain the following lower semicontinuity result.

\begin{proposition}\label{prop:lower semicontinuity}
Let $\Omega\subset \R^n$ be a bounded open set with $\mathcal L^n(\partial\Omega)=0$, let $\mu\in\mathcal M^+(\Omega)$ with $\mathcal L^n\ll \mu$, and let $F\in \mathbf{R}(\Omega;\R^{N\times n})\cap \mathbf{Q}(\R^{N\times n})$ with
\[
m|A|\le F(A)\le M(1+|A|)
\]
for some $0<m\le M$.
Then the functional
\[
\mathcal F(u):=\int_{\Omega}F\left(\frac{dDu}{d\mu}\right)\,d\mu+\int_{\Omega}F^{\infty}\left(\frac{dD^{s,\mu}u}{d|D^{s,\mu}u|}\right)\,d|D^{s,\mu}u|,\quad u\in\BV(\Omega;\R^N),
\]
is lower semicontinuous with respect to convergence in $L^1(\Omega;\R^N)$.
\end{proposition}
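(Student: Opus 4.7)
The plan is to combine Propositions \ref{prop:estimate from below} and \ref{prop:estimate from above} to obtain the identity $\mathcal F(u)=\mathcal F_*(u,\Omega)$ for every $u\in\BV(\Omega;\R^N)$, and then to observe that the relaxation $\mathcal F_*$ is by its very construction lower semicontinuous with respect to $L^1(\Omega;\R^N)$-convergence. The hypotheses on $\Omega$, $\mu$ and $F$ stated here are exactly those required by the two previous propositions, so their combination immediately yields
\[
\mathcal F(u)\le \mathcal F_*(u,\Omega)\le \mathcal F(u)\quad\text{for every }u\in\BV(\Omega;\R^N).
\]

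The only remaining task is the standard diagonal argument showing that a relaxation is lower semicontinuous. Let $(u_k)\subset\BV(\Omega;\R^N)$ with $u_k\to u$ in $L^1(\Omega;\R^N)$; we may assume $\liminf_{k\to\infty}\mathcal F(u_k)<\infty$, and after passing to a subsequence that the liminf is a limit. For each $k$, by the definition \eqref{eq:definition by relaxation} and the equality $\mathcal F(u_k)=\mathcal F_*(u_k,\Omega)$, there is a sequence $(u_k^j)_{j\in\N}\subset W^{1,1}_\mu(\Omega;\R^N)$ with $u_k^j\to u_k$ in $L^1(\Omega;\R^N)$ as $j\to\infty$ and
\[
\liminf_{j\to\infty}\int_{\Omega}F\!\left(\frac{dDu_k^j}{d\mu}\right)d\mu\ \le\ \mathcal F(u_k)+\tfrac{1}{k}.
\]
Choose $j(k)\in\N$ so large that $\|u_k^{j(k)}-u_k\|_{L^1(\Omega;\R^N)}<1/k$ and
\[
\int_{\Omega}F\!\left(\frac{dDu_k^{j(k)}}{d\mu}\right)d\mu\ \le\ \mathcal F(u_k)+\tfrac{2}{k}.
\]
Then the diagonal sequence $w_k:=u_k^{j(k)}$ lies in $W^{1,1}_\mu(\Omega;\R^N)$ and converges to $u$ in $L^1(\Omega;\R^N)$, hence is admissible in the infimum defining $\mathcal F_*(u,\Omega)$. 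Therefore
\[
\mathcal F(u)=\mathcal F_*(u,\Omega)\le\liminf_{k\to\infty}\int_{\Omega}F\!\left(\frac{dDw_k}{d\mu}\right)d\mu\le\liminf_{k\to\infty}\!\Bigl(\mathcal F(u_k)+\tfrac{2}{k}\Bigr)=\liminf_{k\to\infty}\mathcal F(u_k),
\]
which is the required lower semicontinuity.

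There is essentially no obstacle left at this stage: all the substantial work, both the blow-up/Young-measure lower bound and the mollification-based upper bound, has already been carried out in Propositions \ref{prop:estimate from below} and \ref{prop:estimate from above}. The present statement is really just the observation that, once the integral representation $\mathcal F=\mathcal F_*$ has been secured, lower semicontinuity is inherited automatically from the fact that a lower semicontinuous envelope (with respect to a given notion of convergence) is itself lower semicontinuous with respect to that convergence.
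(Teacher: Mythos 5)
Your proof is correct and follows essentially the same route as the paper: combine Propositions \ref{prop:estimate from below} and \ref{prop:estimate from above} to identify $\mathcal F$ with the relaxed functional $\mathcal F_*$, and then use lower semicontinuity of the relaxation with respect to $L^1$-convergence. The only difference is that you spell out the standard diagonal argument which the paper simply declares obvious.
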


\begin{proof}
The relaxed functional $\mathcal F_*(u,\Omega)$ given in \eqref{eq:definition by relaxation} is obviously lower semicontinuous with respect to convergence in $L^1(\Omega;\R^N)$, and by Proposition \ref{prop:estimate from below} and Proposition \ref{prop:estimate from above} it equals the functional $\mathcal F(u)$ given in this proposition.
\end{proof}

We recall Jensen's inequalities for gradient Young measures with respect to the Lebesgue measure $\mathcal L^n$, given in Theorem \ref{thm:Jensens inequalities for generalized Young measures}.
We can now partially generalize these inequalities to the case of a general measure $\mu$.

\begin{theorem}\label{thm:Jensens inequalities wrt mu}
Let $\Omega\subset \R^n$ be a bounded open set with $\mathcal L^n(\partial\Omega)=0$, let $\mu\in\mathcal M^+(\Omega)$ with $\mathcal L^n\ll\mu$, let $F\in \mathbf{R}(\Omega;\R^{N\times n})\cap \mathbf{Q}(\R^{N\times n})$ with $F\ge 0$, let $u\in\BV(\Omega;\R^N)$, and let $\nu\in\mathbf{Y}(\Omega,\mu;\R^{N\times n})$ be a gradient Young measure with $\lambda_{\nu}(\partial\Omega)=0$ and with barycenter $Du$.
Then the following hold:
\begin{align}
\label{eq:Jensen wrt mu absolutely continuous part}& F\left(\frac{dDu}{d\mu}\right)\le \langle F,\nu_x\rangle+\langle F^{\infty},\nu_x^{\infty}\rangle\frac{d\lambda_{\nu}}{d\mu}(x)\quad\textrm{for }\mu\textrm{-almost every } x\in\Omega,\\
\label{eq:Jensen wrt mu singular part}& F^{\infty}\left(\frac{dD^{s,\mu} u}{d|D^{s,\mu} u|}\right)|D^{s,\mu} u|\le \langle F^{\infty},\nu_x^{\infty}\rangle\lambda^{s,\mu}_{\nu}\quad\textrm{as measures}.
\end{align}

\end{theorem}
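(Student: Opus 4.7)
The plan is to combine the generation property of the gradient Young measure with the lower semicontinuity result of Proposition \ref{prop:lower semicontinuity} to obtain a comparison of Radon measures on $\Omega$, and then to decompose both sides with respect to $\mu$ in order to extract the two pointwise inequalities.

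Let $(u_j)\subset \BV(\Omega;\R^N)$ be a sequence with $u_j\to u$ in $L^1(\Omega;\R^N)$ whose derivatives $Du_j$ generate $\nu$ with respect to $\mu$. Since $F$ is independent of $x$, it is trivially $\mu\times\mathcal B(\R^{N\times n})$-measurable, so Corollary \ref{cor:representation wrt mu} applied with $f=F$ yields the weak-$*$ convergence
\[
F\!\left(\tfrac{dDu_j}{d\mu}\right)\mu+ F^\infty\!\left(\tfrac{dD^{s,\mu}u_j}{d|D^{s,\mu}u_j|}\right)|D^{s,\mu}u_j|\ \overset{*}{\rightharpoondown}\ \langle F,\nu_x\rangle\,\mu+\langle F^\infty,\nu_x^\infty\rangle\,\lambda_\nu
\]
in $\mathcal M(\overline\Omega)$. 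For any open ball $B\Subset\Omega$ satisfying $\mathcal L^n(\partial B)=\mu(\partial B)=\lambda_\nu(\partial B)=0$ (which holds for all but countably many radii since the three measures are finite), the weak-$*$ convergence gives convergence of masses on $B$. Combining this with Proposition \ref{prop:lower semicontinuity} applied on $B$ to the same sequence $(u_j)$ yields
\[
\int_B F\!\left(\tfrac{dDu}{d\mu}\right)d\mu+\int_B F^\infty\!\left(\tfrac{dD^{s,\mu}u}{d|D^{s,\mu}u|}\right)d|D^{s,\mu}u|\ \le\ \int_B\langle F,\nu_x\rangle\,d\mu+\int_B\langle F^\infty,\nu_x^\infty\rangle\,d\lambda_\nu.
\]
Exhausting arbitrary open sets by such balls, together with outer regularity of Radon measures, then upgrades this to the measure inequality
\begin{equation}\label{eq:plan-measure-ineq}
F\!\left(\tfrac{dDu}{d\mu}\right)\mu+F^\infty\!\left(\tfrac{dD^{s,\mu}u}{d|D^{s,\mu}u|}\right)|D^{s,\mu}u|\ \le\ \langle F,\nu_x\rangle\,\mu+\langle F^\infty,\nu_x^\infty\rangle\,\lambda_\nu
\end{equation}
on $\Omega$.

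To extract the two pointwise inequalities, I would decompose both sides of \eqref{eq:plan-measure-ineq} with respect to $\mu$. The left-hand side is already presented in decomposed form: the first summand is absolutely continuous with respect to $\mu$, while the second is singular since $|D^{s,\mu}u|\perp\mu$. Writing $\lambda_\nu=\tfrac{d\lambda_\nu}{d\mu}\mu+\lambda_\nu^{s,\mu}$, the right-hand side decomposes into the absolutely continuous piece $\bigl(\langle F,\nu_x\rangle+\langle F^\infty,\nu_x^\infty\rangle\tfrac{d\lambda_\nu}{d\mu}\bigr)\mu$ and the singular piece $\langle F^\infty,\nu_x^\infty\rangle\lambda_\nu^{s,\mu}$. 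An inequality between two nonnegative Radon measures forces the same inequality between their absolutely continuous and between their singular parts with respect to any common reference (testing on a $\mu$-null carrier of the singular parts), so applying Radon--Nikodym differentiation to the absolutely continuous comparison gives \eqref{eq:Jensen wrt mu absolutely continuous part} for $\mu$-almost every $x$, and the singular comparison is precisely \eqref{eq:Jensen wrt mu singular part}.

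The main obstacle is the passage to the measure inequality \eqref{eq:plan-measure-ineq}: one must reconcile the generation of $\nu$ on the closure $\overline\Omega$ with the $L^1$-based lower semicontinuity on $\Omega$, and verify that Proposition \ref{prop:lower semicontinuity} genuinely applies on the smaller balls $B\Subset\Omega$. This reduces to the three boundary conditions $\mathcal L^n(\partial B)=\mu(\partial B)=\lambda_\nu(\partial B)=0$, which fail for only countably many radii; once they are secured, the remaining steps are routine manipulations with Radon--Nikodym decompositions.
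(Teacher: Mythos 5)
Your overall route is the same as the paper's: use the generation property to identify the limit of the measures $F(dDu_j/d\mu)\,\mu+F^\infty(\cdot)\,|D^{s,\mu}u_j|$, compare with the lower semicontinuity of Proposition \ref{prop:lower semicontinuity} on balls whose boundaries are null for $\mathcal L^n$, $\mu$ and $\lambda_\nu$, and then differentiate; assembling a global measure inequality first and splitting it into absolutely continuous and singular parts with respect to $\mu$ is equivalent to the paper's Besicovitch/Vitali differentiation and is fine.

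There is, however, one genuine gap: you invoke Proposition \ref{prop:lower semicontinuity} for $F$, but that proposition requires the coercivity $m|A|\le F(A)\le M(1+|A|)$ with $m>0$, whereas the theorem only assumes $F\ge 0$ (a bounded quasiconvex $F$, for instance, is admissible here but not there). As written, the key inequality on each ball $B$ is therefore not justified. The paper resolves this by first proving \eqref{eq:Jensen wrt mu absolutely continuous part} and \eqref{eq:Jensen wrt mu singular part} under the extra assumption $m|A|\le F(A)$, exactly as you do, and then treating general nonnegative $F$ by applying the coercive case to $F_i(A):=\max\{F(A),|A|/i\}$ and letting $i\to\infty$; since $F_i\searrow$ towards $F$ where $F(A)\ge|A|/i$ and $F_i\to F$ pointwise with uniform linear growth, the two inequalities pass to the limit. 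You should add this reduction step; the rest of your argument then goes through.
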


\begin{proof}
Take a sequence $(u_j)\subset \BV(\Omega;\R^N)$ that generates $\nu$. We know that $u_j\overset{*}{\rightharpoondown}u$ in $\BV(\Omega;\R^N)$, see the discussion after \eqref{eq:generation of gradient Young measure wrt mu}.
Note that $F\in\mathbf{R}(\Omega;\R^{N\times n})\cap \mathbf{Q}(\R^{N\times n})\subset \mathbf{E}(\Omega;\R^{N\times n})$ (constant in the $x$-variable), so that $F$ necessarily has linear growth $F(A)\le M(1+|A|)$ for some $M\ge 0$.
Let us first also assume that $F$ has the coercivity property $m|A|\le F(A)$ for some $m>0$ and all $A\in\R^{N\times n}$.
By combining our lower semicontinuity result, Proposition \ref{prop:lower semicontinuity}, with the fact that $F\in \mathbf{E}(\Omega;\R^{N\times n})$, we obtain
\begin{align*}
&\int_{\Omega}F\left(\frac{dDu}{d\mu}\right)\,d\mu+\int_{\Omega}F^{\infty}\left(\frac{dD^{s,\mu}u}{d|D^{s,\mu}u|}\right)\,d|D^{s,\mu}u|\\
&\qquad\qquad\le\liminf_{j\to\infty}\left(\int_{\Omega} F\left(\frac{dDu_j}{d\mu}\right)\,d\mu
+\int_{\Omega}F^{\infty}\left(\frac{dD^{s,\mu}u_j}{d|D^{s,\mu}u_j|}\right)\,d|D^{s,\mu}u_j|\right)\\
&\qquad\qquad =\int_{\Omega} \langle F_i,\nu_x\rangle\, d\mu+
\int_{\Omega} \langle F_i^{\infty},\nu_x^{\infty}\rangle\,d\lambda_{\nu}.
\end{align*}
We can equally well write the above inequality in any open $U\subset\Omega$ (in particular, a ball) with $\lambda_{\nu}(\partial U)=0$. Thus we can differentiate the inequality with respect to $\mu$, and obtain \eqref{eq:Jensen wrt mu absolutely continuous part} by the Besicovitch differentiation theorem (see e.g. \cite[Theorem 2.22]{AFP}). By writing the above inequality for balls from a suitable Vitali covering of $\Omega$, we obtain \eqref{eq:Jensen wrt mu singular part}.

The general case can be obtained by writing \eqref{eq:Jensen wrt mu absolutely continuous part} and \eqref{eq:Jensen wrt mu singular part} for integrands
\[
F_i(A):=\max\{F(A),|A|/i\},\quad i\in\N,
\]
and letting $i\to\infty$.
\end{proof}

\begin{corollary}\label{cor:Jensens inequalities wrt mu}
With $\Omega$, $\mu$, $u$, and $\nu$ as in the previous theorem, there exist sets $E_1,E_2\subset \Omega$ with $\mu(E_1)=0$ and $| D^{s,\mu}u|(E_2)=0$ such that for every $F\in \mathbf{R}(\Omega;\R^{N\times n})\cap \mathbf{Q}(\R^{N\times n})$ with $F\ge 0$, we have
\begin{align}
\label{eq:Jensen wrt mu absolutely continuous part revisited}F\left(\frac{dDu}{d\mu}\right)\le \langle F,\nu_x\rangle+\langle F^{\infty},\nu_x^{\infty}\rangle\frac{d\lambda_{\nu}}{d\mu}(x)\quad\textrm{for every } x\in\Omega\setminus E_1,\\
\label{eq:Jensen wrt mu singular part revisited}F^{\infty}\left(\frac{dD^{s,\mu} u}{d|D^{s,\mu} u|}\right)\le \langle F^{\infty},\nu_x^{\infty}\rangle\frac{d\lambda^{s,\mu}_{\nu}}{d|D^{s,\mu} u|}\quad\textrm{for every } x\in\Omega\setminus E_2.
\end{align}
\end{corollary}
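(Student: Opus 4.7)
The plan is to derive the corollary from Theorem~\ref{thm:Jensens inequalities wrt mu} by fixing a countable test family of integrands that approximates every admissible $F$, applying the theorem to each member (so that the union of the resulting exceptional sets is still null), and then passing to the limit.

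For each $M\in\N$, let $\mathbf{Q}_M$ denote the subclass of $F\in \mathbf{R}(\Omega;\R^{N\times n})\cap \mathbf{Q}(\R^{N\times n})$ with $0\le F(A)\le M(1+|A|)$. Via the transformation $T$ from Section~\ref{sec:generalized Young measures}, each such $F$ corresponds to an element $TF$ of the separable Banach space $C(\overline{\mathbb{B}^{N\times n}})$, so $\mathbf{Q}_M$ is separable in the topology of uniform $T$-convergence. I would pick a countable family $\{F^M_k\}_{k}\subset \mathbf{Q}_M$ with $\{TF^M_k\}_k$ dense in $T(\mathbf{Q}_M)$, enumerate the countable collection $\bigcup_M\{F^M_k\}_k$ as $\{G_l\}_l$, and apply Theorem~\ref{thm:Jensens inequalities wrt mu} to each $G_l$. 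Writing $E_1^l$ for the $\mu$-null set where \eqref{eq:Jensen wrt mu absolutely continuous part} fails for $G_l$, and $E_2^l$ for the $|D^{s,\mu}u|$-null set where the differentiated form of \eqref{eq:Jensen wrt mu singular part} (obtained via Besicovitch differentiation) fails, I would set
\[
E_1 := \bigcup_{l\in\N} E_1^l \cup \bigl\{x\in\Omega : \langle |\cdot|,\nu_x\rangle = \infty\bigr\}, \qquad E_2 := \bigcup_{l\in\N} E_2^l,
\]
which retain the required null-measure properties.

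Given an arbitrary admissible $F$, pick $M$ with $F\in \mathbf{Q}_M$ and extract $G_{l_j}$ from $\{F^M_k\}$ with $\|TG_{l_j}-TF\|_{\infty}\to 0$; this yields uniform convergence $G_{l_j}\to F$ on compact subsets of $\R^{N\times n}$ and $G_{l_j}^{\infty}\to F^{\infty}$ on $\partial\mathbb{B}^{N\times n}$. For $x\in\Omega\setminus E_1$, write \eqref{eq:Jensen wrt mu absolutely continuous part} for each $G_{l_j}$ and send $j\to\infty$: the left-hand side converges pointwise, and the two terms on the right converge by dominated convergence, using the common dominant $M(1+|\cdot|)\in L^1(\nu_x)$ (finite because $x\notin E_1$) together with the uniform boundedness of $G_{l_j}^{\infty}$ on $\partial\mathbb{B}^{N\times n}$. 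This gives \eqref{eq:Jensen wrt mu absolutely continuous part revisited}, and a parallel argument on $\Omega\setminus E_2$ for the differentiated singular inequality gives \eqref{eq:Jensen wrt mu singular part revisited}. The main subtlety I anticipate is choosing a mode of approximation strong enough to commute with the integrals $\langle F,\nu_x\rangle$ and $\langle F^{\infty},\nu_x^{\infty}\rangle$ uniformly in $x$; the $T$-transform handles both issues simultaneously, since uniform convergence of $TF_k$ encodes both uniform convergence of $F_k$ on compacta (controlling $\langle F_k,\nu_x\rangle$ by dominated convergence) and uniform convergence of $F_k^{\infty}$ on the unit sphere (controlling the recession term by $1$-homogeneity).
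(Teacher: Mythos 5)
Your proposal is correct and follows essentially the same route as the paper: both use the transform $T$ to embed the admissible integrands into the separable space $C(\overline{\mathbb{B}^{N\times n}})$, extract a countable dense family, apply Theorem \ref{thm:Jensens inequalities wrt mu} to each member, take the (countable) union of the exceptional sets, and pass to the limit using that uniform convergence of $TF_k$ controls both $\langle F_k,\nu_x\rangle$ and $\langle F_k^{\infty},\nu_x^{\infty}\rangle$. Your stratification by the growth constant $M$ and the explicit inclusion of $\{x:\langle|\cdot|,\nu_x\rangle=\infty\}$ in $E_1$ are unnecessary but harmless refinements of the same argument.
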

The point is that we can find exceptional sets that do not depend on the integrand $F$.
\begin{proof}
Again, we note that $\mathbf{R}(\Omega;\R^{N\times n})\cap \mathbf{Q}(\R^{N\times n})\subset \mathbf{E}(\Omega;\R^{N\times n})$ (constant in the $x$-variable). Recalling the transformation $T$ given in Section \ref{sec:generalized Young measures}, we have that
\[
\{T(F):\,F\in \mathbf{R}(\Omega;\R^{N\times n})\cap \mathbf{Q}(\R^{N\times n}),\,F\ge 0\}
\]
contains a countable dense subset $\{G_i\}_{i\in\N}$, since it is contained in the separable space
$C(\overline{\mathbb{B}^{N\times m}})$. Then \eqref{eq:Jensen wrt mu absolutely continuous part revisited} and \eqref{eq:Jensen wrt mu singular part revisited} hold for some choice of sets $E_1,E_2\subset \Omega$ with $\mu(E_1)=0$ and $| D^{s,\mu}u|(E_2)=0$, and with $F=T^{-1}G_i$ for any $i\in\N$. It is easy to see for any $F\in \mathbf{R}(\Omega;\R^{N\times n})\cap \mathbf{Q}(\R^{N\times n})$, $F\ge 0$ that
\[
F_k\left(\frac{dDu}{d\mu}(x)\right)-F\left(\frac{dDu}{d\mu}(x)\right)\to 0
\]
for every $x\in\Omega\setminus E_1$, for a sequence $(F_k)\subset \{T^{-1}G_i\}_{i\in\N}$ with $T(F_k)\to T(F)$ in $C(\overline{\mathbb{B}^{N\times m}})$. The other terms are handled similarly, and so we get the desired inequalities.
\end{proof}

Now we can prove our semicontinuity result, where we also allow for $x$-dependence of the integrand. The result could also be given without a boundary term, but its inclusion simplifies our proof. In the case $\mu=\mathcal L^n$, an analogous result was given in \cite[Theorem 10]{KR2}.

\begin{theorem}\label{thm:lower semicontinuity for $x$-dependent integrands}
Let $\Omega\subset \R^n$ be a bounded Lipschitz domain with inner boundary normal $\nu_{\Omega}$,
let $\mu\in\mathcal M^+(\Omega)$ with $\mathcal L^n\ll\mu$, and let $F\in \mathbf{R}(\Omega;\R^{N\times n})$ be nonnegative and $\mu\times\mathcal B(\R^{N\times n})$-measurable such that $A\mapsto F(x,A)$ is quasiconvex for each fixed $x\in\overline{\Omega}$. Then the functional
\begin{align*}
\mathcal F(u):=&\int_{\Omega}F\left(x,\frac{dDu}{d\mu}\right)\,d\mu+\int_{\Omega}F^{\infty}\left(x,\frac{dD^{s,\mu}u}{d|D^{s,\mu}u|}\right)\,d|D^{s,\mu}u|\\
&+\int_{\partial\Omega}F^{\infty}\left(x,\frac{u}{|u|}\otimes \nu_{\Omega}\right)|u|\,d\mathcal H^{n-1}
\end{align*}
is weakly* sequentially lower semicontinuous in $\BV(\Omega;\R^N)$.
\end{theorem}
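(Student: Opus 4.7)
The plan is to extend everything by zero across $\partial\Omega$ to a slightly larger open set $\Omega'$, so that the boundary integral becomes a singular contribution to an integral functional of the type already analysed, and then to combine generation of a gradient Young measure on $\Omega'$ with the $x$-dependent Jensen inequality that follows directly from Corollary~\ref{cor:Jensens inequalities wrt mu}. Concretely, given $u_j\overset{*}{\rightharpoondown}u$ in $\BV(\Omega;\R^N)$, I assume $\liminf_j\mathcal F(u_j)<\infty$ and pass to a subsequence realising this liminf. Fix a bounded open $\Omega'\Supset\overline{\Omega}$ with $\mathcal L^n(\partial\Omega')=0$ and let $\tilde u_j,\tilde u\in\BV(\Omega';\R^N)$ be the zero extensions. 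Since $\Omega$ is Lipschitz one has
\[
D\tilde u=Du\mres\Omega+(u\otimes\nu_\Omega)\,\mathcal H^{n-1}\mres\partial\Omega,
\]
and likewise for $\tilde u_j$. Set $\tilde\mu:=\mu+\mathcal L^n\mres(\Omega'\setminus\overline{\Omega})$, so that $\mathcal L^n\ll\tilde\mu$ on $\Omega'$ and $\tilde\mu(\partial\Omega)=0$, and extend $F$ to $\overline{\Omega'}\times\R^{N\times n}$ by first extending $F^\infty$ continuously via Tietze and then setting $F(x,\cdot):=F^\infty(x,\cdot)$ on $\overline{\Omega'}\setminus\overline{\Omega}$. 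The extension is nonnegative, $\tilde\mu\times\mathcal B$-measurable, quasiconvex in $A$, of linear growth, belongs to $\mathbf R(\Omega';\R^{N\times n})$, and satisfies $F(x,0)=0$ off $\overline{\Omega}$. Because $\tilde\mu(\partial\Omega)=0$, the boundary contribution to $D\tilde u$ enters the $\tilde\mu$-singular part $D^{s,\tilde\mu}\tilde u$, and a direct computation gives
\[
\mathcal F(u)=\tilde{\mathcal F}(\tilde u):=\int_{\Omega'}F\!\left(x,\tfrac{dD\tilde u}{d\tilde\mu}\right)d\tilde\mu+\int_{\Omega'}F^\infty\!\left(x,\tfrac{dD^{s,\tilde\mu}\tilde u}{d|D^{s,\tilde\mu}\tilde u|}\right)d|D^{s,\tilde\mu}\tilde u|,
\]
and likewise $\mathcal F(u_j)=\tilde{\mathcal F}(\tilde u_j)$.

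Since $(\tilde u_j)$ is norm-bounded in $\BV(\Omega')$, Theorem~\ref{thm:generation of generalized Young measures for general mu} supplies a subsequence of $(D\tilde u_j)$ generating a gradient Young measure $\tilde\nu=(\tilde\nu_x,\lambda_{\tilde\nu},\tilde\nu_x^\infty)\in\mathbf Y(\Omega',\tilde\mu;\R^{N\times n})$; by replacing $\Omega'$ with a slightly smaller neighbourhood of $\overline\Omega$ (chosen so that the finite measure $\lambda_{\tilde\nu}$ does not charge its new boundary) I arrange $\lambda_{\tilde\nu}(\partial\Omega')=0$. The $L^1(\Omega')$-convergence $\tilde u_j\to\tilde u$ identifies the barycenter of $\tilde\nu$ with $D\tilde u$. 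Applying Corollary~\ref{cor:representation wrt mu} to the extended $F$, and testing the resulting weak-$*$ convergence of measures in $\mathcal M(\overline{\Omega'})$ against the constant $1$ (permissible since neither $\tilde\mu$ nor $\lambda_{\tilde\nu}$ charges $\partial\Omega'$), yields
\[
\tilde{\mathcal F}(\tilde u_j)\ \longrightarrow\ \int_{\Omega'}\langle F(x,\cdot),\tilde\nu_x\rangle\,d\tilde\mu+\int_{\overline{\Omega'}}\langle F^\infty(x,\cdot),\tilde\nu_x^\infty\rangle\,d\lambda_{\tilde\nu}.
\]

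It remains to bound $\tilde{\mathcal F}(\tilde u)$ by this right-hand side via an $x$-dependent Jensen inequality. For each fixed $x_0\in\overline{\Omega'}$ the frozen function $F(x_0,\cdot)\colon\R^{N\times n}\to\R$ is nonnegative, quasiconvex, of linear growth, with continuous recession $F^\infty(x_0,\cdot)$; regarded as an integrand constant in $x$ it therefore lies in $\mathbf R(\Omega';\R^{N\times n})\cap\mathbf Q(\R^{N\times n})$. The crucial observation is that Corollary~\ref{cor:Jensens inequalities wrt mu} produces exceptional sets $E_1,E_2\subset\Omega'$ with $\tilde\mu(E_1)=|D^{s,\tilde\mu}\tilde u|(E_2)=0$ that are \emph{independent of the frozen integrand}. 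Applying the Corollary with $F(x_0,\cdot)$ and evaluating at $x=x_0$ for $x_0\notin E_1$ (resp.\ $x_0\notin E_2$) produces the pointwise inequalities
\begin{align*}
F\!\left(x,\tfrac{dD\tilde u}{d\tilde\mu}\right)&\le\langle F(x,\cdot),\tilde\nu_x\rangle+\langle F^\infty(x,\cdot),\tilde\nu_x^\infty\rangle\tfrac{d\lambda_{\tilde\nu}}{d\tilde\mu},\\
F^\infty\!\left(x,\tfrac{dD^{s,\tilde\mu}\tilde u}{d|D^{s,\tilde\mu}\tilde u|}\right)&\le\langle F^\infty(x,\cdot),\tilde\nu_x^\infty\rangle\tfrac{d\lambda_{\tilde\nu}^{s,\tilde\mu}}{d|D^{s,\tilde\mu}\tilde u|},
\end{align*}
holding $\tilde\mu$-a.e.\ and $|D^{s,\tilde\mu}\tilde u|$-a.e.\ respectively. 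Integrating and summing bounds $\tilde{\mathcal F}(\tilde u)$ by the right-hand side of the previous display, and combining the three displays yields
\[
\mathcal F(u)=\tilde{\mathcal F}(\tilde u)\le\lim_{j\to\infty}\tilde{\mathcal F}(\tilde u_j)=\liminf_{j\to\infty}\mathcal F(u_j).
\]

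I expect the main obstacle to be the bookkeeping in the extension step: choosing an extension of $F$ to $\Omega'\setminus\overline{\Omega}$ that preserves nonnegativity, quasiconvexity and membership in $\mathbf R$ together with $\tilde\mu\times\mathcal B$-measurability, and verifying that the boundary mass of $D\tilde u$ on $\partial\Omega$ reproduces exactly the boundary integrand $F^\infty(x,u/|u|\otimes\nu_\Omega)|u|\,d\mathcal H^{n-1}$ inside $\tilde{\mathcal F}$. By contrast, the Jensen step is remarkably clean: the universality of the exceptional sets in Corollary~\ref{cor:Jensens inequalities wrt mu} removes any need for Scorza--Dragoni or countable-density approximations to lift the $x$-independent Jensen inequalities to the $x$-dependent setting required here.
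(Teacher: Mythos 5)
Your proposal is correct and follows essentially the same route as the paper: zero-extension to a larger Lipschitz domain $\Omega'$ so that the trace term becomes part of the $\tilde\mu$-singular mass of $D\tilde u$ on $\partial\Omega$, extension of $F$ off $\overline{\Omega}$ as the positively $1$-homogeneous function built from a Tietze extension of $F^{\infty}$, extension of $\mu$ by Lebesgue measure outside $\Omega$, generation of a gradient Young measure with respect to the extended measure, and the freezing argument made legitimate by the integrand-independent exceptional sets of Corollary~\ref{cor:Jensens inequalities wrt mu}. The only (immaterial) deviation is that you shrink $\Omega'$ to kill $\lambda_{\tilde\nu}(\partial\Omega')$, whereas this is automatic since all $D\tilde u_j$ are supported in $\overline{\Omega}$.
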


Note that in the last term, $u$ is a \emph{boundary trace}, see e.g. \cite[Section 3.7]{AFP}.

\begin{proof}
Let $u_j\overset{*}{\rightharpoondown} u$ in $\BV(\Omega;\R^N)$. Take a bounded Lipschitz domain $\Omega'\Supset \Omega$, and denote by $u_j^e,u^e$ the zero extensions of $u_j,u$ to $\Omega'\setminus \Omega$. Since $\Omega$ is a bounded Lipschitz domain, we can use standard gluing theorems for $\BV$ functions, see e.g. \cite[Proposition 3.21, Theorem 3.84, Theorem 3.86]{AFP}, to obtain that $u_j^e\in\BV(\Omega';\R^N)$ with
\[
Du_j^e= \nabla u_j\,\mathcal L^n \mres\Omega + D^s u_j+u_j\otimes\nu_{\Omega}\,\mathcal H^{n-1}\mres\partial\Omega
\]
and $\Vert u_j\Vert_{L^1(\partial\Omega;\R^N)}\le C\Vert u_j\Vert_{\BV(\Omega;\R^N)}$ with $C$ depending only on $\Omega$; and similarly for $u^e$. By the weak* convergence, $u_j$ is a norm-bounded sequence in $\BV(\Omega;\R^N)$,
so we have that $u_j^e$ is a norm-bounded sequence in $\BV(\Omega';\R^N)$ and that $u_j^e\to u^e$ in $L^1(\Omega';\R^N)$. This implies that $u_j^e\overset{*}{\rightharpoondown} u^e$ in $\BV(\Omega';\R^N)$.

Since $F\in\mathbf{R}(\Omega,\R^{N\times n})$, $F^{\infty}(x,A)$ is continuous on $\overline{\Omega}\times \partial\mathbb{B}^{N\times n}$, which is a compact set. By the Tietze extension theorem, we can extend $F^{\infty}$ to $\overline{\Omega'}\times \partial\mathbb{B}^{N\times n}$ as a continuous
nonnegative function $(F^e)^{\infty}$. If we define $F^e(x,tA):=t(F^e)^{\infty}(x,A)$ for any $t\ge 0$, $A\in\R^{N\times n}$, and $x\in\overline{\Omega'}$, we see that our notation is consistent in that the recession function of $F^e$ is indeed $(F^e)^{\infty}$.
We also extend $\mu$ by $\mu^e:=\mu\mres \Omega+\mathcal L^N\mres (\R^n\setminus \Omega)$.
Then we see that $F^e\in \mathbf{R}(\Omega';\R^{N\times n})$ is nonnegative and $\mu^e\times \mathcal B({\R^{N\times n}})$-measurable. We write
\begin{align*}
\mathcal F^e(u_j^e):&=\int_{\Omega'}F^e\left(x,\frac{dDu_j^e}{d\mu^e}\right)\,d\mu^e+\int_{\Omega'}(F^e)^{\infty}\left(x,\frac{dD^{s,\mu^e}u_j^e}{d|D^{s,\mu^e}u_j^e|}\right)\,d|D^{s,\mu^e}u_j^e|\\
&=\int_{\Omega}F\left(x,\frac{dDu_j}{d\mu}\right)\,d\mu+\int_{\Omega}F^{\infty}\left(x,\frac{dD^{s,\mu}u_j}{d|D^{s,\mu}u_j|}\right)\,d|D^{s,\mu}u_j|\\
&\qquad +\int_{\partial\Omega}F^{\infty}\left(x,\frac{u_j}{|u_j|}\otimes \nu_{\Omega}\right)|u_j|\,d\mathcal H^{n-1}\\
&=\mathcal F(u_j),
\end{align*}
and similarly for $u^e$.
We conclude that we need to prove that $\mathcal F^e(u^e)\le \liminf_{j\to\infty}\mathcal F^e(u_j^e)$.
Pick first a subsequence (not relabeled) that gives this limit, and then by Theorem \ref{thm:generation of generalized Young measures for general mu} and Corollary \ref{cor:representation wrt mu} we can pick a further subsequence (not relabeled) such that the sequence $Du_j^e$ generates a generalized Young measure $\nu=(\nu_x,\lambda_{\nu},\nu_x^{\infty})$, with respect to $\mu^e$. Clearly $\lambda_{\nu}(\partial\Omega')=0$, and then the barycenter of $\nu$ is $Du^e$, see the discussion after \eqref{eq:generation of gradient Young measure wrt mu}. Note that for any fixed $x\in\overline{\Omega}$, $F^e(x,\cdot)\in\mathbf{R}(\Omega, \R^{N\times n})\cap \mathbf{Q}(\R^{N\times n})$, so that we can apply Corollary \ref{cor:Jensens inequalities wrt mu} to obtain
\begin{align*}
&\liminf_{j\to\infty}\mathcal F^e(u_j^e)
= \int_{\Omega'}\langle F^e(x,\cdot),\nu_x\rangle\,d\mu^e+\int_{\Omega'}\langle(F^e)^{\infty}(x,\cdot),\nu_x^{\infty}\rangle\,d\lambda_{\nu}(x)\\
&\qquad\ge \int_{\Omega'}F^e\left(x,\frac{dDu^e}{d\mu^e}\right)\,d\mu^e+\int_{\Omega'}(F^e)^{\infty}\left(x,\frac{dD^{s,\mu^e} u^e}{d|D^{s,\mu^e} u^e|}\right)\,d|D^{s,\mu^e} u^e|\\
&\qquad = \mathcal F^e(u^e).
\end{align*}
\end{proof}




\begin{thebibliography}{ACMM}

\bibitem{Alb}G. Alberti,
\textit{Rank one property for derivatives of functions with bounded variation},
Proc. Roy. Soc. Edinburgh Sect. A \textbf{123} (1993), no. 2, 239--274. 

\bibitem{AB}J. J. Alibert and G. Bouchitt\'e,
\textit{Non-uniform integrability and generalized Young measures},
J. Convex Anal. \textbf{4} (1997), no. 1, 129--147. 

\bibitem{ABF}L. Ambrosio, G. Buttazzo, and I. Fonseca,
\textit{Lower semicontinuity problems in Sobolev spaces with respect to a measure},
J. Math. Pures Appl. (9) \textbf{75} (1996), no. 3, 211--224. 

\bibitem{ADM}L. Ambrosio and G. Dal Maso,
\textit{On the relaxation in $\BV(\Omega;\R^m)$ of quasi-convex integrals},
J. Funct. Anal. \textbf{109} (1992), no. 1, 76--97. 

\bibitem{AFP}L. Ambrosio, N. Fusco, and D. Pallara,
\textit{Functions of bounded variation and free discontinuity problems},
Oxford Mathematical Monographs. The Clarendon Press, Oxford University Press, New York, 2000. xviii+434 pp.

\bibitem{BKK}J. Ball, B. Kirchheim, and J. Kristensen,
\textit{Regularity of quasiconvex envelopes},
Calc. Var. Partial Differential Equations \textbf{11} (2000), no. 4, 333--359. 

\bibitem{BD}B. Dacorogna,
\textit{Direct methods in the calculus of variations},
Second edition. Applied Mathematical Sciences, 78. Springer, New York, 2008. xii+619 pp. 

\bibitem{DPM}R. J. DiPerna and A. J. Majda,
\textit{Oscillations and concentrations in weak solutions of the incompressible fluid equations},
Comm. Math. Phys. 108 (1987), no. 4, 667--689. 

\bibitem{EvGa}L. Evans and R. Gariepy,
\textit{Measure theory and fine properties of functions},
Studies in Advanced Mathematics. CRC Press, Boca Raton, FL, 1992. viii+268 pp.

\bibitem{FM}I. Fonseca and S. M\"uller,
\textit{Relaxation of quasiconvex functionals in $\BV(\Omega,\R^p)$ for integrands $f(x,u,\nabla u)$},
Arch. Rational Mech. Anal. \textbf{123} (1993), no. 1, 1--49.

\bibitem{HKLL}H. Hakkarainen, J. Kinnunen, P. Lahti, and P. Lehtel\"a,
\textit{Relaxation and integral representation for functionals of linear growth on metric measures spaces},
preprint 2014.

\bibitem{KK}B. Kirchheim and J. Kristensen,
\textit{On Rank-One Convex Functions that are homogeneous of Degree One},
preprint 2015.

\bibitem{JK}J. Kristensen,
\textit{Lower semicontinuity in spaces of weakly differentiable functions},
Math. Ann. \textbf{313} (1999), no. 4, 653–710. 

\bibitem{KR2}J. Kristensen and F. Rindler,
\textit{Characterization of generalized gradient Young measures generated by sequences in $W^{1,1}$ and $\BV$},
Arch. Ration. Mech. Anal. \textbf{197} (2010), no. 2, 539--598. \emph{Erratum}, Ibid. \textbf{203} (2012), 693--700.

\bibitem{KR1}J. Kristensen and F. Rindler,
\textit{Relaxation of signed integral functionals in $\BV$},
Calc. Var. Partial Differential Equations \textbf{37} (2010), no. 1-2, 29--62.

\bibitem{Mul}S. M\"uller,
\textit{On quasiconvex functions which are homogeneous of degree $1$},
Indiana Univ. Math. J. 41 (1992), no. 1, 295--301. 

\bibitem{Resh}Y. G. Reshetnyak,
\textit{The weak convergence of completely additive vector-valued set functions},
Sibirsk. Mat. J. \textbf{9} 1968, 1386--1394. 

\bibitem{Rin} F. Rindler,
\textit{Lower semicontinuity and Young measures in BV without Alberti's rank-one theorem}, Adv. Calc. Var. \textbf{5} 2012, no. 2, 127--159.


\bibitem{Zie}W.P. Ziemer, \emph{Weakly differentiable functions. Sobolev spaces and functions of bounded variation} Graduate Texts in Mathematics, 120. Springer-Verlag, New York, 1989. 

\end{thebibliography}
\end{document}